\newcommand{\FF}{\mathbb{F}}
\newcommand{\QQ}{\mathbb{Q}}
\def\XX{\mathfrak{X}}
\def\YY{\mathfrak{Y}}
\def\Rep{\mathbf{Rep}}
\def\FHom{\mathscr{H}om}
\def\Ga{\mathbb{G}_{a}}
\def\A1{\mathbb{A}^1}
\def\P1{\mathbb{P}^1}
\def\hH{\mathcal{H}}
\def\ccT{\mathcal{T}}
\def\Isod{\mathbf{Isoc}^{\dagger\dagger}}
\def\bQQ{\overline{\mathbb{Q}}}
\def\bQla{\overline{\mathbb{Q}}_{\lambda}}
\def\Ql{\mathbb{Q}_{\ell}}
\def\bQp{\overline{\mathbb{Q}}_p}
\def\bQpF{\overline{\mathbb{Q}}_{p,F}}
\def\Fet{\mathbf{Fet}}
\def\bGm{\mathbb{G}_m}
\def\HPi{\widehat{\Pi}}
\def\PhiMIC{\Phi\textnormal{-}\mathbf{MIC}}
\DeclareMathOperator{\LS}{\mathbf{LocSys}}
\DeclareMathOperator{\Fr}{Fr}
\DeclareMathOperator{\Pred}{Pro-red}
\DeclareMathOperator{\Pss}{Pro-ss}
\DeclareMathOperator{\IM}{Im}
\DeclareMathOperator{\et}{\textnormal{\'{e}t}}
\DeclareMathOperator{\MIC}{\mathbf{MIC}}
\DeclareMathOperator{\hol}{hol}
\DeclareMathOperator{\Hol}{Hol}
\DeclareMathOperator{\Gal}{Gal}
\DeclareMathOperator{\Ker}{Ker}
\DeclareMathOperator{\Hom}{Hom}
\DeclareMathOperator{\Ext}{Ext}
\DeclareMathOperator{\Spec}{Spec}
\DeclareMathOperator{\cont}{cont}
\DeclareMathOperator{\rH}{H}
\DeclareMathOperator{\rb}{b}
\DeclareMathOperator{\rD}{D}
\DeclareMathOperator{\rW}{W}
\DeclareMathOperator{\rR}{R}
\DeclareMathOperator{\id}{id}
\DeclareMathOperator{\rig}{rig}
\DeclareMathOperator{\Del}{\mathbf{Del}}
\DeclareMathOperator{\GL}{GL}
\DeclareMathOperator{\Sm}{Sm}
\DeclareMathOperator{\Vect}{Vec}
\DeclareMathOperator{\Aut}{Aut}
\DeclareMathOperator{\red}{red}
\DeclareMathOperator{\mot}{mot}
\DeclareMathOperator{\semis}{ss}
\DeclareMathOperator{\geo}{geo}
\DeclareMathOperator{\arith}{arith}
\DeclareMathOperator{\uni}{uni}
\DeclareMathOperator{\sep}{sep}
\DeclareMathOperator{\cst}{cst}
\DeclareMathOperator{\smooth}{smooth}
\DeclareMathOperator{\ev}{ev}
\DeclareMathOperator{\coev}{coev}
\DeclareMathOperator{\FIsoc}{\mathbf{F-Isoc}}
\DeclareMathOperator{\PhiIsoc}{\mathbf{\Phi-Isoc}}
\DeclareMathOperator{\FIsocd}{\mathbf{F-Isoc}^{\dagger}}
\DeclareMathOperator{\PhiIsocd}{\mathbf{\Phi-Isoc}^{\dagger}}
\DeclareMathOperator{\PhipIsocd}{\mathbf{\Phi}^{\prime}-\mathbf{Isoc}^{\dagger}}
\DeclareMathOperator{\Isocd}{\mathbf{Isoc}^{\dagger}}
\DeclareMathOperator{\Isoc}{\mathbf{Isoc}}
\DeclareMathOperator{\ur}{ur}
\DeclareMathOperator{\WD}{WD}
\newtheorem{theorem}{Theorem}[subsection]
\newtheorem{lemma}[theorem]{Lemma}
\newtheorem{prop}[theorem]{Proposition}
\newtheorem{cor}[theorem]{Corollary}
\theoremstyle{definition}
\newtheorem{remark}[theorem]{Remark}
\newtheorem{defn}[theorem]{Definition}
\newtheorem{secnumber}[theorem]{}
\numberwithin{equation}{subsection}
\numberwithin{equation}{theorem}
\newcolumntype{L}{>{$}l<{$}} 
\title{Drinfeld's lemma for $F$-isocrystals, II: Tannakian approach}
\author{Kiran S. Kedlaya, Daxin Xu}
\date{\today}
\thanks{The first author is supported by NSF grants DMS-1802161 and DMS-2053473 and the UC San Diego Warschawski Professorship. The second author is supported by National Natural Science Foundation of China Grant (Nos. 12222118, 12288201) and CAS Project for Young Scientists in Basic Research, Grant No. YSBR-033. D. X. is grateful to Xinwen Zhu for suggesting this question and helpful discussions.}
\subjclass[2020]{14F30 (primary), 14D24, 14F10}
\keywords{Drinfeld's lemma, overconvergent isocrystals, p-adic cohomology}
\begin{document}

\begin{abstract}
We prove a Tannakian form of Drinfeld's lemma for isocrystals on a variety over a finite field, equipped with actions of partial Frobenius operators. This provides an intermediate step towards transferring V. Lafforgue's work on the Langlands correspondence over function fields from $\ell$-adic to $p$-adic coefficients. 
We also discuss a motivic variant and a local variant of Drinfeld's lemma. 
\end{abstract}

\selectlanguage{english}
\maketitle

\section{Introduction}

This paper is a sequel of sorts to the first author's paper \cite{kedlaya-dri}, 
although there is no logical dependence between the two. Both papers concern themselves with
analogues of ``Drinfeld's lemma'' in \'etale cohomology, and in particular with corresponding statements in $p$-adic cohomology; however, there are some differences in scope and methodology which we highlight below.

\begin{secnumber} \label{sss:intro setting}
Drinfeld's lemma was first introduced by Drinfeld in his proof of the Langlands correspondence for $\GL_2$ over global function fields of characteristic $p>0$ \cite{Dri78}. 
Let us briefly review the result. Let $X_1,X_2$ be two connected schemes over $k=\mathbb{F}_p$. The scheme $X:=X_1\times_k X_2$ is equipped with two endomorphisms $F_{1}, F_{2}$, obtained by base changes of the absolute Frobenius on $X_1,X_2$, respectively. 
We consider the category $\mathcal{C}(X,\Phi)$ of objects $(T,F_{ \{1\}},F_{ \{2\}})$ consisting of a finite \'etale morphism $T\to X$ and isomorphisms $F_{ \{i\}}\colon T\times_{X,F_i}X \xrightarrow{\sim} T$ commuting with each other, whose composition is the relative Frobenius morphism $F_{T/X}$ of $T$ over $X$. 
This category is a Galois category and we denote by $\pi_1(X,\Phi,\overline{x})$ the Galois group defined by a geometric point $\overline{x}$ of $X$. 
Then Drinfeld's lemma says that the projection maps $X\to X_i$ induce an isomorphism of profinite groups:
\begin{equation} \label{eq:origin Drinfeld lemma}
	\pi_1(X,\Phi,\overline{x}) \xrightarrow{\sim} \pi_1(X_1,\overline{x})\times \pi_1(X_2,\overline{x}).
\end{equation}
(See \cite[Theorem~8.1.4]{lau} or \cite[Theorem~4.2.12]{kedlaya-aws}. For the key case where $X_2$ is a geometric point, see also \cite[Lemme~8.11]{lafforgue-v}.)

A closely related result is that any quasicompact 
open immersion which is stable under the $F_i$ is covered by products of open immersions into the $X_i$. 
(See \cite[Lemma~9.2.1]{lau} or \cite[Theorem~4.3.6]{kedlaya-aws}. For the key case where $X_2$ is a geometric point, see also \cite[Lemme~8.12]{lafforgue-v}.)
These two results together allow us to view lisse $\ell$-adic sheaves with ``partial Frobenius'' on $X$ as $\ell$-adic representations of $\pi_1(X_1,\overline{x})\times \pi_1(X_2,\overline{x})$, and to work with constructible sheaves with partial Frobenius on $X$ via stratifications coming from $X_1$ and $X_2$. 
We refer to \cite{lau} and \cite[Lecture 4]{kedlaya-aws} for more detailed expositions.  
\end{secnumber}

\begin{secnumber}
	Drinfeld's lemma also plays an essential role in V. Lafforgue's work \cite{lafforgue-v} on the automorphic-to-Galois direction of the Langlands correspondence for reductive groups over a global function field $F$. 
	Roughly speaking, V. Lafforgue showed that the space of cuspidal automorphic functions (with values in $\overline{\mathbb{Q}}_{\ell}$) of a reductive group $G$ over $F$ admits a decomposition indexed by certain $\ell$-adic Langlands parameters. 
	This decomposition is obtained by investigating the $\ell$-adic cohomology of certain moduli stacks of shtukas for $G$. 
	Moreover, he conjectured that this decomposition should be $\ell$-independent and indexed by certain motivic Langlands parameters. 
	
	In particular, we expect that there is a variant of V. Lafforgue's result in terms of $p$-adic Langlands parameters, corresponding to Abe's adaptation of the work of L. Lafforgue from $\ell$-adic to $p$-adic coefficients \cite{abe-companion}.
	In this adaptation, the $p$-adic analogues of of lisse $\ell$-adic sheaves are \textit{overconvergent $F$-isocrystals}. 
	Besides those objects, we may also consider the larger category of \textit{convergent $F$-isocrystals}, which admit no $\ell$-adic analogue but play an important role in the $p$-adic setup.
	
	Recently, Drinfeld proposed an unconditional definition of motivic Langlands parameters \cite{Dri18}. 
	Inspired by this work, we consider $p$-adic Langlands parameters as homomorphisms of the Tannakian group of the category of overconvergent $F$-isocrystals over a curve to the Langlands dual group of $G$. 
\end{secnumber}	

\begin{secnumber}
	From this perspective, we prove a Tannakian form of Drinfeld's lemma for overconvergent/convergent $F$-isocrystals, which aims to establish the aforementioned result for $p$-adic Langlands parameters. 
	Note that another ingredient of V. Lafforgue's approach, the \emph{geometric Satake equivalence}, was established for $F$-isocrystals by the second author and X. Zhu \cite{XZ}. 

	Keep the notations of \S~\ref{sss:intro setting}. 
	Given an overconvergent (resp. convergent) isocrystal $\mathscr{E}$ on $X$, a \emph{partial Frobenius structure} on $\mathscr{E}$ consists of two isomorphisms $\varphi_i\colon F_i^*(\mathscr{E})\xrightarrow{\sim} \mathscr{E}$ such that $\varphi_1\circ F_1^*(\varphi_2)= \varphi_2\circ F_2^*(\varphi_1)$; this composition in particular provides $
\mathscr{E}$ with the structure of an $F$-isocrystal.
	The category $\PhiIsocd(X)$ (resp. $\PhiIsoc(X)$) of overconvergent (resp. convergent) isocrystals with a partial Frobenius structure is a Tannakian category and we denote by $\pi_1^{\PhiIsocd}(X)$ (resp. $\pi_1^{\PhiIsoc}(X)$) the associated Tannakian group (with respect to a fiber functor). 
\end{secnumber}

\begin{theorem}[\ref{t:main thm}, \ref{t:Drinfeld lemma origin}, \ref{t:Drinfeld lemma conv}] \label{t:thm intro}
	\textnormal{(i)} 
	Suppose each $X_i$ is geometrically connected (resp. smooth and geometrically connected) over $k$.
	The pullback functors of projections $p_i\colon X\to X_i$ induce a canonical isomorphism of Tannakian groups:
	\begin{equation} \label{eq:iso intro}
	p_1^{\circ}\times p_2^{\circ}\colon 
	\pi_1^{\PhiIsocd}(X)\xrightarrow{\sim} \pi_1^{\FIsocd}(X_1)\times \pi_1^{\FIsocd}(X_2) \quad
	\textnormal{(resp. } \pi_1^{\PhiIsoc}(X)\xrightarrow{\sim} \pi_1^{\FIsoc}(X_1)\times \pi_1^{\FIsoc}(X_2)).
\end{equation}
	where $\pi_1^{\FIsocd}$ (resp. $\pi_1^{\FIsoc}$) denotes the Tannakian group of the category of overconvergent (resp. convergent) $F$-isocrystals over $\bQQ_p$, and similarly with $F$ replaced by $\Phi$. 

	\textnormal{(ii)}
	Suppose each $X_i$ is smooth and geometrically connected over $k$. 
	By taking connected components in the above isomorphism, we recover the isomorphism \eqref{eq:origin Drinfeld lemma}. 
\end{theorem}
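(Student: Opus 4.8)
The plan is to exhibit $p_1^{\circ}\times p_2^{\circ}$ as the map on Tannakian groups induced by an external-product functor, and then verify two complementary properties of that functor. First I would construct, for $\mathscr E_i\in\FIsocd(X_i)$ with Frobenius structure $\psi_i$, a canonical partial Frobenius structure on $p_i^*\mathscr E_i$ whose $i$-th component is $p_i^*\psi_i$ and whose other component is the canonical isomorphism $F_{3-i}^*p_i^*\mathscr E_i\xrightarrow{\sim}p_i^*\mathscr E_i$ (using $p_i\circ F_{3-i}=p_i$), and whose total Frobenius is the pulled-back $F$-isocrystal structure. Tensoring gives an exact tensor functor $\FIsocd(X_1)\times\FIsocd(X_2)\to\PhiIsocd(X)$, $(\mathscr E_1,\mathscr E_2)\mapsto p_1^*\mathscr E_1\otimes p_2^*\mathscr E_2$; the only thing to check is that its two partial Frobenius components commute in the required sense and compose to the total one, which is a routine diagram chase. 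Since the $\FIsocd(X_i)$ are Tannakian over $\bQp$, this extends to a tensor functor out of the Deligne tensor product $\FIsocd(X_1)\boxtimes\FIsocd(X_2)$, whose Tannakian group is $\pi_1^{\FIsocd}(X_1)\times\pi_1^{\FIsocd}(X_2)$; with a fiber functor on $\PhiIsocd(X)$ pulled back to each factor, the induced map is exactly $p_1^{\circ}\times p_2^{\circ}$. By the standard Tannakian criteria it then suffices to prove: (A) the external-product functor is fully faithful, which forces the image of $p_1^{\circ}\times p_2^{\circ}$ to be schematically dense; and (B) every object of $\PhiIsocd(X)$ is a subquotient of $p_1^*\mathscr E_1\otimes p_2^*\mathscr E_2$ for suitable $\mathscr E_i$, which forces $p_1^{\circ}\times p_2^{\circ}$ to be a closed immersion. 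A homomorphism of affine group schemes that is both a closed immersion and schematically dominant is an isomorphism, which gives part (i); the convergent case runs along the same lines with convergent isocrystals in place of overconvergent ones.

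\textbf{Proof of (A).} Passing to internal homs and duals, I would reduce (A) to the identity $\Hom_{\PhiIsocd(X)}(\idrep,p_1^*\mathscr E_1\otimes p_2^*\mathscr E_2)=\Hom_{\FIsocd(X_1)}(\idrep,\mathscr E_1)\otimes_{\bQp}\Hom_{\FIsocd(X_2)}(\idrep,\mathscr E_2)$. The left-hand side is the subspace of $\rH^0_{\rig}(X,p_1^*\mathscr E_1\otimes p_2^*\mathscr E_2)$ fixed by both partial Frobenii, and the Künneth formula for rigid cohomology of overconvergent isocrystals identifies $\rH^0_{\rig}(X,p_1^*\mathscr E_1\otimes p_2^*\mathscr E_2)$ with $\rH^0_{\rig}(X_1,\mathscr E_1)\otimes_{\bQp}\rH^0_{\rig}(X_2,\mathscr E_2)$ compatibly with the partial Frobenii, $\varphi_1$ acting as the Frobenius of $\mathscr E_1$ on the first factor and trivially on the second, and symmetrically for $\varphi_2$; taking the common fixed space gives $\rH^0_{\rig}(X_1,\mathscr E_1)^{\varphi=1}\otimes\rH^0_{\rig}(X_2,\mathscr E_2)^{\varphi=1}$. (Geometric connectedness of the $X_i$ enters here to guarantee $\rH^0_{\rig}(X_i,\calO)=\bQp$, so that the unit object is sent to the unit object.) In the convergent case one uses Künneth for convergent cohomology instead.

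\textbf{Proof of (B): the main obstacle.} I expect (B) to be where the real work lies, and in particular the passage beyond semisimple objects. A uniform route, handling overconvergent and convergent isocrystals alike, is a dévissage along the slope filtration of the total Frobenius, reducing to the isoclinic — in particular unit-root — case, where the partial Frobenius structure linearizes and can be compared with the étale picture; the base case here is essentially the classical Drinfeld lemma. In the overconvergent case one alternatively has a shortcut: reduce to a simple $\mathscr E\in\PhiIsocd(X)$, use that an overconvergent $F$-isocrystal is determined up to semisimplification by its Frobenius traces at closed points (the partial Frobenius structure recording a compatible two-variable refinement of these traces), pass to an $\ell$-adic companion $\mathscr E^{\ell}$, which then carries a partial Frobenius structure and is therefore by the classical Drinfeld lemma \cite{lau}, in its lisse-sheaf form, a subquotient of an external product $p_1^*\mathscr F_1^{\ell}\otimes p_2^*\mathscr F_2^{\ell}$, and transfer back using the $p$-adic companions $\mathscr F_i$ of $\mathscr F_i^{\ell}$ and the fact that the companion of $p_1^*\mathscr F_1^{\ell}\otimes p_2^*\mathscr F_2^{\ell}$ is $p_1^*\mathscr F_1\otimes p_2^*\mathscr F_2$. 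In either approach the delicate point is to pass from simple objects to all objects — ``subquotient of an external product'' not being stable under extensions — which one addresses with the Künneth formula for rigid (resp.\ convergent) cohomology in all degrees, controlling $\Ext^{\bullet}$ between external products, together with the appropriate refinement of the closed-immersion criterion, or with the companion correspondence as an equivalence of categories. Either way, (B) is the crux.

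\textbf{Part (ii).} Finally I would apply $\pi_0$ (formation of the maximal pro-finite-étale quotient) to the isomorphism of part (i). Since $\pi_0$ commutes with products, the right-hand side becomes $\pi_0(\pi_1^{\FIsocd}(X_1))\times\pi_0(\pi_1^{\FIsocd}(X_2))$; the full subcategory of $\FIsocd(X_i)$ of objects with finite monodromy is equivalent to the category of $\bQp$-representations of $\pi_1(X_i,\overline x)$ with finite image, whence $\pi_0(\pi_1^{\FIsocd}(X_i))=\pi_1(X_i,\overline x)$. The same identification on the left, using that an object of $\PhiIsocd(X)$ with finite monodromy is precisely an object of the Galois category $\mathcal C(X,\Phi)$ of \S\ref{sss:intro setting}, gives $\pi_0(\pi_1^{\PhiIsocd}(X))=\pi_1(X,\Phi,\overline x)$, so that \eqref{eq:origin Drinfeld lemma} is recovered.
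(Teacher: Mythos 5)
Your overall plan — exhibit the map as coming from an external-product tensor functor and check the Tannakian criteria directly — is a genuinely different route from the paper's. The paper instead fixes a rational point $x \in X_1$, sets up a two-term exact sequence of Tannakian groups
\[
1 \to \pi_1^{\Isod}(X',\Phi') \xrightarrow{u^\circ} \pi_1^{\Isod}(X,\Phi) \xrightarrow{p_1^\circ} \pi_1^{\Isod}(X_1) \to 1,
\]
verifies its exactness via the Esnault--Hai--Sun criterion (Theorem~\ref{exact criterion}), and concludes by induction; the crucial input throughout is the existence and good behaviour of a pushforward functor $p_{i,*}$ on $\Phi$-isocrystals (Proposition~\ref{P:pushforward}, resp.\ \ref{p:pushforward conv}), which is used to verify the ``maximal trivial subobject lifts'' and ``geometrically trivial $\Rightarrow$ pulled back'' conditions. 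Your Part~(A) (full faithfulness of the external-product functor via K\"unneth) is sound and appears in the paper as Proposition~\ref{p:pullback subquotients}(i) and via \cite[Lemma~4.5]{Abe14}, and your Part~(ii) matches Theorem~\ref{t:Drinfeld lemma origin} in spirit, though you omit the nontrivial point that any object in the image of a tensor functor from $\Rep_{\bQp}(\Gamma)$ ($\Gamma$ finite) is automatically unit-root at every closed point (\cite[Proposition~B.4.1]{DK17}).

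The genuine gap is Part~(B), which you rightly flag as the crux, but whose proposed resolutions do not go through. First, ``subquotient of a pullback'' is not closed under extensions, so reducing to simple objects (either by slope d\'evissage or companions) leaves the extension problem untouched; your two suggested fixes do not close it: the companion correspondence is a bijection on isomorphism classes of irreducible objects, \emph{not} an equivalence of categories, so it carries no information about extensions, and the K\"unneth formula for $\Ext^\bullet$ controls extensions between external products but does not show an arbitrary extension of subquotients of pullbacks is again a subquotient of a pullback. Second, the slope filtration for the total Frobenius does not exist for an arbitrary overconvergent $F$-isocrystal on $X$ — only generically, and returning from a dense open to $X$ requires precisely the sort of extension/purity argument that is built into the paper's Proposition~\ref{P:pushforward} (Kedlaya--Shiho purity, cut-by-curves, and full faithfulness of restriction). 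Third, even in the convergent case, the slope filtration is only a priori preserved by partial Frobenii under a constancy hypothesis (Theorem~\ref{t:slope filtration}), and identifying the unit-root piece with a representation of $\pi_1^{\et}(X,\Phi,\overline x)$ requires a $\Phi$-version of Crew's theorem (Proposition~\ref{P:Crew partial Frob}), which is nontrivial. Finally, you do not address the reduction from general geometrically connected $X_i$ to smooth quasiprojective $X_i$ (de Jong's alterations plus proper-hypercovering descent, Lemma~\ref{lem:descent}), without which the cohomological machinery is unavailable. In short, your (A) is correct, your framing is valid, but closing (B) essentially amounts to reinventing the paper's pushforward machinery, and your sketches do not do so.
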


In the overconvergent case, we also establish a similar isomorphism for overconvergent isocrystals which can be equipped with a Frobenius structure but do not carry a specified one. This roughly corresponds to passing from arithmetic to geometric fundamental groups in the $\ell$-adic setting. 

\begin{secnumber}
	In very recent work \cite{kedlaya-mono-rel}, the first author has established a relative version of the $p$-adic local monodromy theorem for differential modules with a Frobenius structure over an annulus \cite{And02II,Meb02,kedlaya-mono}. 
	An application is the local monodromy theorem for modules with an integrable connection and a partial Frobenius structure over polyannuli \cite[Theorem 3.3.6]{kedlaya-mono-rel}. 
	We formulate these results in terms of a Tannakian form of local Drinfeld's lemma (Theorem \ref{t:local Drinfeld lemma}) and discuss some related constructions. 

	We remark that variants of local Drinfeld's lemma for $\ell$-adic sheaves  are key ingredients of the local Langlands correspondence of Genestier--V. Lafforgue \cite{GL} and of Fargues--Scholze \cite{FS}. 

	Inspired by Berger's thesis \cite{Berger}, we expect to deduce a ``de Rham implies potentially semistable'' result for $p$-adic representations of powers of Galois groups from local Drinfeld's lemma. 
	There are some related results in this direction: (i) the overconvergence of multivariate $(\varphi,\Gamma)$-modules has been proved by the first author, A. Carter, and G. Z\'abr\'adi \cite{CKZ21}; (ii) multivariable de Rham representations and the associated $p$-adic differential equations are studied by Brinon, Chiarellotto, and Mazzari \cite{BCM21}. 
\end{secnumber}

\begin{secnumber}
        We now describe the structure of the paper.
	
  The proof of Theorem~\ref{t:thm intro} in the overconvergent case is given in \S\ref{sec-overconvergent}.	A key ingredient is Proposition \ref{P:pushforward},
   which says that the pushforward functor (for arithmetic $\mathscr{D}$-modules) of the projection $p_i\colon X\to X_i$ ($i=1,2$) sends overconvergent isocrystals on $X$, which can be equipped with a partial Frobenius structure, to overconvergent isocrystals on $X_i$. 
	Combined with a criterion of Esnault--Hai--Sun \cite{EPS} on exact sequences of Tannakian groups, we conclude Theorem \ref{t:thm intro}(i) in the overconvergent case. 

	The proof in the convergent case is contained in section \ref{s:conv case} and follows a similar line as in \cite{kedlaya-dri} (see below). 
	We study unit-root and diagonally unit-root convergent isocrystals with a partial Frobenius structure (Propositions \ref{P:Crew diagonal unitroot}, \ref{P:Crew partial Frob}), and the diagonal (resp. partial) Frobenius slope filtrations (Theorems \ref{t:slope filtration}, \ref{T:partial slope filtration}).
	These tools allow us to define a pushforward functor along the projection $p_i\colon X\to X_i$ from $\PhiIsoc(X)$ to $\FIsoc(X_i)$. 
	Then we deduce Theorem \ref{t:thm intro}(i) in the convergent case by a similar argument as in the overconvergent case.

	We upgrade the isomorphism \eqref{eq:iso intro} in an $\ell$-independent form à la Drinfeld \cite{Dri18} in section \ref{s:motivic} (see Proposition \ref{P:l-independence} and \eqref{eq:Drinfeld lemma mot}). 

	The last section is devoted to the local Drinfeld's lemma. 

\end{secnumber}

\begin{secnumber}
We now compare more carefully the results of this paper with those of \cite{kedlaya-dri}.
In \cite{kedlaya-dri}, each $X_i$ is required to be smooth over some perfect field $k_i$, but it is not required that $k_i$ be either finite or independent of $i$. The more restricted situation considered here (in which $k_i = \FF_p$ for all $i$) is sufficient for the application to geometric Langlands; moreover, it is not immediately apparent how to reproduce the Tannakian formulation in the setting of \cite{kedlaya-dri}. 

Another important difference with \cite{kedlaya-dri} is in the overall structure of the arguments. Therein, the convergent case is treated first, using the isomorphism \eqref{eq:origin Drinfeld lemma}
 as input, and then the overconvergent case is deduced as a corollary; herein, we obtain the overconvergent case directly using cohomological methods, deduce \eqref{eq:origin Drinfeld lemma}
 as a corollary, and finally recover the convergent case.
\end{secnumber}

\textbf{Notations}: 
Let $k$ be a perfect field of characteristic $p>0$, $K$ a complete discrete valuation field of characteristic zero with residue field $k$, and $\mathscr{O}_K$ its ring of integers. 
Let $s$ be a positive integer and $q=p^s$. 
We assume moreover that the $s$-th Frobenius automorphism $k\xrightarrow{\sim} k$, $x\mapsto x^q$ lifts to an automorphism $\sigma\colon \mathscr{O}_K\xrightarrow{\sim} \mathscr{O}_K$. 

	
In this article, we mainly work with the case where $(k,\sigma)=(\mathbb{F}_q,\id_{\mathscr{O}_K})$ (and $K$ is therefore a finite extension of $\mathbb{Q}_p$). 
	In this case, we fix an algebraic closure $\bQp$ of $K$.

	In the following, a \textit{$k$-variety} $X$ means a separated scheme of finite type over $k$.
	We denote the $s$-th Frobenius morphism on $X$ by $F_X$. 
	
	When using the notation $\prod$ for a product of schemes, the product is taken over $k$.

\section{Drinfeld's lemma for overconvergent $F$-isocrystals} \label{sec-overconvergent}
\subsection{Generalities on Tannakian categories}
	In the following, we study some general constructions in the Tannakian formalism. 
	Let $E$ be a field of characteristic zero.
	Let $\Vect_E$ denote the category of finite-dimensional $E$-vector spaces. 
	For any neutral Tannakian category $\mathscr{C}$ over $E$ with respect to a fiber functor $\omega$, we use $\pi_1(\mathscr{C},\omega)$ to denote its Tannakian group (i.e., the group of natural automorphisms of $\omega$). 
	We omit $\omega$ from the notation if there is no risk of confusion. 

	\begin{secnumber}\label{setting}
		Let $\widetilde{\mathscr{C}}$ be a Tannakian category over $E$, neutralized by a fiber functor $\omega\colon \widetilde{\mathscr{C}}\to \Vect_F$, where $F$ is a finite extension of $E$. Suppose there exist $E$-linear tensor equivalences $\tau_i\colon \widetilde{\mathscr{C}}\to \widetilde{\mathscr{C}}$ and isomorphisms of tensor functors $\eta_i\colon \omega\circ \tau_i\xrightarrow{\sim} \omega$ for $i=1,2,\dots,n$. 
	Moreover, we assume that there exist natural isomorphisms $\sigma_{ij}\colon \tau_i\circ \tau_j \simeq \tau_{j}\circ \tau_i$ for $i,j$ such that the following diagram commutes:
	\begin{equation} \label{sigma ij}
		\xymatrix{
		\omega\circ \tau_i\circ \tau_j \ar[r]^{\eta_i\circ \id} \ar[d]_{\omega(\sigma_{ij})} & \omega\circ \tau_j \ar[r]^{\eta_j} & \omega \\
		\omega \circ \tau_j\circ \tau_i \ar[r]^{\eta_j\circ \id} & \omega\circ \tau_i \ar[ru]_{\eta_i}
		}
	\end{equation}
	Since $\omega$ is faithful, such an isomorphism $\sigma_{ij}$ is unique.
	In the following, for every $i=1,\dots,n$, we fix a quasi-inverse $\tau_i^{-1}$ of $\tau_i$. For $m\in \mathbb{Z}$, we set $\tau_i^m$ to be the $|m|$-th composition of $\tau_i$ (or $\tau_i^{-1}$ if $m<0$) and define $\eta_i^m\colon \omega\circ \tau_i^m \xrightarrow{\sim} \omega$ by the composition of $\eta_i$ (or the inverse of $\eta_i$ if $m<0$). 

	We define a category $\mathscr{C}_0$ as follows: an object $(\mathscr{E},\varphi_1,\dots,\varphi_{n})$ consists of an object $\mathscr{E}$ of $\widetilde{\mathscr{C}}$ and isomorphisms $\varphi_i\colon \tau_i(\mathscr{E})\xrightarrow{\sim} \mathscr{E}$ such that $\varphi_j\circ \tau_j(\varphi_i)=\varphi_i\circ \tau_i(\varphi_j)$ via $\sigma_{ij}$. Morphisms of $\mathscr{C}_0$ are morphisms of $\widetilde{\mathscr{C}}$ compatible with the $\varphi_i$. 
	We have a canonical functor 
\begin{displaymath}
	\mathscr{C}_0\to \widetilde{\mathscr{C}}, \quad (\mathscr{E},\varphi_i)\mapsto \mathscr{E}.
\end{displaymath} 

By \cite[\S~2.5]{Del90} and a similar argument to \cite[Proposition 3.4.5]{XZ}, we can show that $\mathscr{C}_0$ is a Tannakian category over $E$ neutralized by $\omega$ over $\Vect_F$. 
	
	We say an object of $\mathscr{C}_0$ is \textit{constant} if its image in $\widetilde{\mathscr{C}}$ is isomorphic to a finite direct sum of the unit object.

	Recall \cite[\S~2]{Ber08} that a Tannakian subcategory of $\widetilde{\mathscr{C}}$ is a strictly full abelian subcategory closed under $\otimes$, duals, and subobjects (and thus quotients). 
	The constant objects of $\mathscr{C}_0$ form a Tannakian subcategory $\mathscr{C}_{0,\cst}$ of $\mathscr{C}_0$ (\textit{loc.cit}).

	Let $\mathscr{C}$ be the smallest Tannakian subcategory of $\widetilde{\mathscr{C}}$ containing the essential image of $\mathscr{C}_0$ (i.e. generated by the subquotients of the essential image of $\mathscr{C}_0$). 

\end{secnumber}

\begin{secnumber}
	In the following, we assume that $\widetilde{\mathscr{C}}$ is neutral over $E$ by $\omega\colon \widetilde{\mathscr{C}}\to \Vect_E$. 
	Then so are $\mathscr{C},\mathscr{C}_0$ and $\mathscr{C}_{0,\cst}$. 
	We prove the following results by a similar approach of \cite[Appendix]{DZ20}, where D'Addezio treated the case $n=1$. A similar discussion appeared in \cite[Appendix 2]{HNY} and \cite[\S~3.4]{XZ}. 
\end{secnumber}

\begin{prop} \label{ess of pione} 
	\textnormal{(i)} 
	The canonical functors $\mathscr{C}_{0,\cst}\to \mathscr{C}_0\to \mathscr{C}$ induce a short exact sequence 
	\begin{displaymath}
		1\to \pi_1(\mathscr{C}) \to \pi_1(\mathscr{C}_0)\to \pi_1(\mathscr{C}_{0,\cst})\to 1.
	\end{displaymath}

	\textnormal{(ii)} The category $\mathscr{C}_{0,\cst}$ is equivalent to the category of representations of $\mathbb{Z}^n$ over $E$ and the Tannakian group $\pi_1(\mathscr{C}_{0,\cst})$ is isomorphic to the pro-algebraic completion of $\mathbb{Z}^n$. 
\end{prop}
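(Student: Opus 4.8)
The plan is to apply the criterion of Esnault--Hai--Sun characterizing when a sequence of Tannakian groups $1\to H \to G \to Q \to 1$ coming from a chain of tensor functors $\mathscr{C}_{0,\cst}\to \mathscr{C}_0 \to \mathscr{C}$ is exact. That criterion (for a neutral Tannakian category over a field of characteristic zero) asks for three conditions: (a) the functor $\mathscr{C}_{0,\cst}\to \mathscr{C}_0$ is fully faithful with essential image closed under subquotients, so that $G\to Q$ is faithfully flat (surjective); (b) an object of $\mathscr{C}_0$ pulled back from $\mathscr{C}$ — i.e.\ an object $\mathscr{E}$ of $\mathscr{C}$ that happens to be the underlying object of something in $\mathscr{C}_0$ — is trivial in $\mathscr{C}$ if and only if it lies in the essential image of $\mathscr{C}_{0,\cst}$, so that $\pi_1(\mathscr{C})$ is exactly the kernel; and (c) every object of $\mathscr{C}$ is a subquotient of (the underlying object of) an object of $\mathscr{C}_0$, which is immediate since $\mathscr{C}$ is \emph{defined} as the Tannakian subcategory generated by the essential image of $\mathscr{C}_0$. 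I would carry these out in the order (c), (a), (b), recording that (c) is built into the definition, (a) is a formal check on constant objects, and (b) is the crux.

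For part (ii), I would make the equivalence $\mathscr{C}_{0,\cst}\simeq \Rep_E(\mathbb{Z}^n)$ explicit. An object of $\mathscr{C}_{0,\cst}$ is a pair $(\mathscr{E},\varphi_1,\dots,\varphi_n)$ with $\mathscr{E}\cong \idrep^{\oplus d}$ in $\widetilde{\mathscr{C}}$; applying the fiber functor $\omega$ and using the trivializations $\eta_i$, each $\varphi_i$ becomes an automorphism of the vector space $\omega(\mathscr{E})$, and the commutation relation $\varphi_j\circ\tau_j(\varphi_i)=\varphi_i\circ\tau_i(\varphi_j)$ via $\sigma_{ij}$ translates, using the compatibility diagram \eqref{sigma ij}, into the relation that these $n$ automorphisms pairwise commute. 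Hence $(\mathscr{E},\varphi_\bullet)\mapsto (\omega(\mathscr{E}), \omega(\varphi_1),\dots,\omega(\varphi_n))$ is a tensor equivalence onto the category of finite-dimensional $E$-representations of the free abelian group $\mathbb{Z}^n$. It then remains to identify the Tannakian group: for a (discrete) group $\Gamma$ the category $\Rep_E(\Gamma)$ of finite-dimensional representations has Tannakian group the pro-algebraic completion $\Gamma^{\mathrm{alg}}$ (the pro-algebraic group corepresenting the functor sending an affine $E$-group scheme $G$ to $\Hom_{\mathrm{gp}}(\Gamma,G(E))$ — or more precisely to $\Hom(\Gamma, G)$ viewing $\Gamma$ as a constant group scheme); applying this with $\Gamma=\mathbb{Z}^n$ gives $\pi_1(\mathscr{C}_{0,\cst})\cong (\mathbb{Z}^n)^{\mathrm{alg}}$. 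For the faithfulness needed to know $\omega$ restricted to $\mathscr{C}_{0,\cst}$ is a genuine fiber functor, one uses that $\omega$ is already faithful on $\widetilde{\mathscr{C}}$.

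The main obstacle is condition (b): showing that if $\mathscr{E}\in\mathscr{C}$ underlies an object $(\mathscr{E},\varphi_\bullet)\in\mathscr{C}_0$ and $\mathscr{E}$ is trivial as an object of $\widetilde{\mathscr{C}}$ (equivalently, trivial in $\mathscr{C}$), then $(\mathscr{E},\varphi_\bullet)$ is already constant, i.e.\ lies in $\mathscr{C}_{0,\cst}$. This is not quite tautological: $\mathscr{E}$ trivial means $\mathscr{E}\cong\idrep^{\oplus d}$ in $\widetilde{\mathscr{C}}$, but one must check that the $\varphi_i$ are then automatically compatible with this trivialization in the sense required by the definition of $\mathscr{C}_{0,\cst}$ — which is immediate from that definition — so in fact the content is only that "trivial in $\mathscr{C}$" and "trivial in $\widetilde{\mathscr{C}}$" agree for objects of $\mathscr{C}$, which holds because $\mathscr{C}$ is a Tannakian subcategory of $\widetilde{\mathscr{C}}$ (the unit object and hence its powers are detected the same way in a full tensor subcategory). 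I would handle this by invoking the cited results of D'Addezio \cite{DZ20} for $n=1$ and \cite{XZ}, \cite{HNY}, noting that the argument is insensitive to $n$: the commuting family $(\tau_i,\eta_i,\sigma_{ij})$ behaves formally exactly as a single pair $(\tau,\eta)$ does, so the inductive or direct argument there applies verbatim with $\mathbb{Z}$ replaced by $\mathbb{Z}^n$.
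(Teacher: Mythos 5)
Your overall framework—applying the Esnault--Hai--Sun exactness criterion (the paper's Theorem \ref{exact criterion})—is the right one, and your argument for part (ii) is essentially a spelled-out version of what the paper packages via Lemma \ref{lemma Weil gp} applied to $\Vect_E$. However, there is a genuine gap in part (i): you have misremembered the criterion and consequently omitted the one step that actually carries weight.

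The paper's Theorem \ref{exact criterion}(iii) requires, \emph{in addition to} the closed-immersion and faithful-flatness checks, three conditions for middle exactness, and you only address two of them. You skip condition (iii.b) entirely (that the maximal trivial subobject $W_0$ of an object of $\mathscr{C}$, viewed in $\mathscr{C}$, must lift to a subobject in $\mathscr{C}_0$—the paper's one-line argument is that $W_0$ is canonical so the $\varphi_i$ preserve it). More seriously, you conflate condition (iii.c) with the closed-immersion condition. Closed immersion (Theorem \ref{exact criterion}(ii)) asks that every object of $\mathscr{C}$ be a \emph{subquotient} of something coming from $\mathscr{C}_0$, which is indeed immediate from the definition of $\mathscr{C}$; but condition (iii.c) asks that every object of $\mathscr{C}$ be a \emph{subobject} of something coming from $\mathscr{C}_0$, and this is not immediate—it is precisely the content of Lemma \ref{lemma Weil gp}(ii), which requires the construction of the ``Weil group'' $W(\mathscr{C}_0) = \pi_1(\mathscr{C})\rtimes\mathbb{Z}^n$, the Zariski density of $\iota\colon W(\mathscr{C}_0)\to\pi_1(\mathscr{C}_0)$, and the resulting normality of $\pi_1(\mathscr{C})$ in $\pi_1(\mathscr{C}_0)$. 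Your proposal never mentions this construction. Meanwhile, the condition you identify as ``the crux'' (that triviality in $\mathscr{C}$ characterizes constant objects of $\mathscr{C}_0$) is condition (iii.a), which the paper dismisses as following from the definition—so your emphasis is inverted: the actual work is in (iii.c), not (iii.a).
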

	
In view of \cite[Proposition 2.21]{DM} and the definition of $\mathscr{C}$, the morphism $\pi_1(\mathscr{C}) \to \pi_1(\mathscr{C}_0)$ (resp. $\pi_1(\mathscr{C}_0)\to \pi_1(\mathscr{C}_{0,\cst})$) is a closed immersion (resp. faithfully flat). 
	We use the following criterion to prove the exactness.

	\begin{theorem}[\cite{DM} Proposition 2.21, \cite{EPS} Theorem A.1] \label{exact criterion}
		We consider a sequence of affine group schemes over $E$:
		\begin{equation} \label{eq:exact criterion group sequence}
		L\xrightarrow{q} G\xrightarrow{p} A 
		\end{equation}
		 and the associated functors:
		\begin{equation}
		\Rep_E(A)\xrightarrow{p^*}\Rep_E(G)\xrightarrow{q^*} 
		\Rep_E(L).
		\end{equation}

		\textnormal{(i)} The map $p\colon G\to A$ is faithfully flat if and only if $p^*(\Rep_E(A))$ is a full subcategory of $\Rep_E(G)$, closed under taking subquotients. 

		\textnormal{(ii)} The map $q\colon L\to G$ is a closed immersion if and only if any object of $\Rep_E(L)$ is a subquotient of an object $q^*(V)$ for some $V\in \Rep_E(G)$. 

		\textnormal{(iii)} Assume that $q$ is a closed immersion and $p$ is faithfully flat. Then the sequence \eqref{eq:exact criterion group sequence} is exact if and only if the following conditions are satisfied.

\begin{itemize}
		\item[(a)] For an object $V\in \Rep_E(G)$, $q^*(V)$ in $\Rep_E(L)$ is trivial if and only if $V\simeq p^*U$ for some object $U$ in $\Rep_E(A)$. 

		\item[(b)] Let $W_0$ be the maximal trivial subobject of $q^*(V)$ in $\Rep_E(L)$. Then there exists a subobject $V_0$ of $V$ in $\Rep_E(G)$ such that $q^*(V_0)\simeq W_0$. 

		\item[(c)] Any object of $\Rep_E(L)$ is a subobject of an object in the essential image of $q^*$.
\end{itemize}

	\end{theorem}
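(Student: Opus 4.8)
The plan is to prove the statement clause by clause, reducing almost everything to the Tannakian dictionary plus one nonformal input. For parts (i) and (ii) I would simply invoke \cite[Proposition~2.21]{DM}: a homomorphism $p\colon G\to A$ of affine group schemes over $E$ is faithfully flat exactly when $A\cong G/\Ker p$, and then $\Rep_E(A)$ is the strictly full subcategory of objects of $\Rep_E(G)$ on which $\Ker p$ acts trivially, which is automatically closed under subquotients; conversely a strictly full subcategory of $\Rep_E(G)$ closed under subquotients, tensor products and duals is $\Rep_E$ of a quotient of $G$. Likewise $q\colon L\to G$ is a closed immersion exactly when $L$ is (isomorphic to) a closed subgroup scheme $q(L)\subseteq G$, in which case every object of $\Rep_E(L)$ is a subquotient of the restriction along $q$ of some object of $\Rep_E(G)$ — obtained from the surjection of coordinate Hopf algebras $\mathcal{O}(G)\to\mathcal{O}(L)$ by passing to finite-dimensional subcomodules — while the converse is again formal. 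In the setting of (iii) this lets me identify $L$ with $q(L)\subseteq G$ and set $N:=\Ker p$, a normal closed subgroup scheme with $G/N\xrightarrow{\sim}A$; exactness then means precisely $q(L)=N$, and the task is to match this with (a)--(c).

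For the implication ``exact $\Rightarrow$ (a),(b),(c)'' I would assume $q(L)=N$. Then (a) is immediate: $q^*(V)$ is trivial iff $N$ acts trivially on $V$ iff $V$ descends along $G\to G/N\cong A$, i.e.\ $V\simeq p^*U$. For (b), normality of $N$ makes $V^N$ a $G$-subrepresentation, and $V^N$ is exactly the maximal trivial $L$-subobject $W_0$ of $q^*(V)$, so $V_0:=V^N$ works. For (c) I would use the one genuinely nonformal fact: for a normal closed subgroup scheme $N\trianglelefteq G$ the quotient $G/N$ is affine, and consequently every object of $\Rep_E(N)$ embeds into the restriction of an object of $\Rep_E(G)$; this is precisely where \cite[Theorem~A.1]{EPS} goes beyond the purely formal content of \cite{DM}, and I would cite it.

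For the converse I would start from (a): applying its ``if'' half to $V=p^*U$ for every $U\in\Rep_E(A)$ shows $(p\circ q)^*U$ is trivial for all $U$, hence $p\circ q$ is the trivial homomorphism and $q(L)\subseteq N$; so $q$ factors as $L\xrightarrow{q'}N\hookrightarrow G$ with $q'$ a closed immersion, and it remains to prove $q(L)=N$. Condition (b), applied to all $V$, forces $V^{q(L)}$ to be a $G$-subrepresentation for every $V$ (the dimension count identifies the lift $V_0$ with $V^{q(L)}$), and this makes each $\Ker(G\to\GL(V^{q(L)}))$ a normal closed subgroup; intersecting over all $V$ shows $q(L)$ is normal in $G$, so the representations of $G$ on which $q(L)$ acts trivially are exactly those factoring through $G/q(L)$. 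The ``only if'' half of (a) says every such representation also has $N$ acting trivially; writing both $q(L)$ and $N$ as the intersection of the kernels of these representations (using $\mathcal{O}(G/q(L))$ as a filtered union of finite-dimensional subcomodules) gives $N\subseteq q(L)$, hence $q(L)=N$. Thus $q'$ is faithfully flat and, being a closed immersion, an isomorphism, so the sequence is exact; condition (c) then also holds by the forward direction, so it is part of the characterization, although one checks that under the standing hypotheses of (iii) it is already implied by (a) and (b).

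I expect the main obstacle to lie in the backward direction, and within it the two group-theoretic translations that drive the argument: that ``$q(L)$ acts trivially on $V$'' may be replaced by ``the smallest normal closed subgroup scheme containing $q(L)$ acts trivially on $V$'', and the fact used for (c) that over a normal $N$ one gets $N$-representations as \emph{subobjects}, not merely subquotients, of restricted $G$-representations. Both demand care because $G$, $L$, $A$ are arbitrary affine — in particular possibly non-finite-type, i.e.\ pro-algebraic — group schemes over $E$, so I would argue throughout with the coordinate Hopf algebras as filtered unions of finite-dimensional subcomodules rather than with algebraic groups directly, following \cite{DM} and the appendix of \cite{EPS}.
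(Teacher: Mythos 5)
The paper gives no proof of this statement at all: it is quoted verbatim from \cite{DM} (Proposition 2.21) and \cite{EPS} (Theorem A.1), so your proposal has to be judged on its own. Parts (i), (ii) and the forward direction of (iii) are fine; in particular you correctly isolate the one nonformal input (affineness of $G/N$ for $N$ normal, giving that every $N$-representation embeds into a restricted $G$-representation) and attribute it to \cite{EPS}.

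The backward direction of (iii), however, has a genuine gap: your argument uses only (a) and (b) and never uses (c), and (a)+(b) do not imply exactness. The failing step is the claim that (b) forces $q(L)$ to be normal in $G$. Condition (b) does give that $V^{q(L)}$ underlies a $G$-subrepresentation for every $V$, but $\bigcap_V \Ker\bigl(G\to \GL(V^{q(L)})\bigr)$ is merely a normal closed subgroup \emph{containing} $q(L)$; nothing identifies it with $q(L)$, so normality does not follow. Concretely, take $E$ of characteristic zero, $G=\GL_2$, $L=B$ the upper-triangular Borel, $A=1$, $q$ the inclusion (a closed immersion) and $p$ the trivial map (faithfully flat). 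Then (a) holds: if $B$ acts trivially on $V\in\Rep_E(G)$, the kernel of $G\to\GL(V)$ is a normal closed subgroup containing $B$, hence contains $B$ and its Weyl conjugate and is all of $\GL_2$, so $V$ is trivial. Condition (b) also holds: by semisimplicity of $\Rep_E(\GL_2)$ one has $V^{B}=V^{\GL_2}$, which is a $G$-subrepresentation. Yet $B\to\GL_2\to 1$ is not exact, and (c) fails: a one-dimensional $B$-representation given by a non-dominant character of the torus occurs as a subquotient, but never as a subobject, of any restricted $\GL_2$-representation, since $B$-stable lines are highest-weight lines. The same example refutes your closing parenthetical that (c) follows from (a) and (b) under the standing hypotheses. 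So the converse genuinely needs (c) (for instance to show that the closed immersion of $L$ into $\Ker(p)$ is also faithfully flat), and for that step you should follow the argument of \cite[Theorem A.1]{EPS} rather than the normality route.
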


	For any $E$-algebra $R$ and $1\le i \le n$, the above structure defines a homomorphism:
	\begin{equation} \label{def Z action}
		u_i\colon \mathbb{Z} \to \Aut(\pi_1(\mathscr{C})(R)),\quad k \mapsto (h\colon \omega\to \omega ~\mapsto~  \omega \xrightarrow{\eta_i^{-k}} \omega\circ \tau_i^k \xrightarrow{ h\circ \id}  \omega\circ \tau_i^k \xrightarrow{\eta_i^k} \omega). 
	\end{equation}
	In view of \eqref{sigma ij}, the images of $u_i,u_j$ commute with each other. 
	We thus obtain an action of $\mathbb{Z}^n$ on $\pi_1(\mathscr{C})$ and this allows us to define a group scheme $\pi_1(\mathscr{C})\rtimes \mathbb{Z}^n$ over $E$, which we denote by $W(\mathscr{C}_0)$.

\begin{lemma} \label{lemma Weil gp}
	\textnormal{(i)} There exist a canonical equivalence of categories $\mathscr{C}_0\xrightarrow{\sim} \Rep_{E}(W(\mathscr{C}_0))$ and a canonical morphism of group schemes $\iota\colon W(\mathscr{C}_0)\to \pi_1(\mathscr{C}_0)$ such that the following diagram is $2$-commutative:
	\begin{displaymath}
		\xymatrix{
		& \mathscr{C}_0 \ar[ld]^{\sim} \ar[rd]^{\sim} & \\
		\Rep_E(\pi_1(\mathscr{C}_0)) \ar[rr]^{\iota^*} & & \Rep_E(W(\mathscr{C}_0))
		}
	\end{displaymath}
	Moreover, the image of $\iota$ is Zariski-dense in $\pi_1(\mathscr{C}_0)$. 
	
	\textnormal{(ii)} The subgroup $\pi_1(\mathscr{C})$ of $\pi_1(\mathscr{C}_0)$ is normal. In particular, every object of $\mathscr{C}$ is a subobject of an object in the essential image of $\mathscr{C}_0$. 
\end{lemma}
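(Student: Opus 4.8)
I would realize $\mathscr{C}_0$ explicitly as the representation category of the group scheme $W(\mathscr{C}_0)=\pi_1(\mathscr{C})\rtimes\mathbb{Z}^n$, obtain $\iota$ from the Tannakian dictionary, and then read off both the Zariski density and the normality from the fact that $\pi_1(\mathscr{C})$ is the normal factor of that semidirect product. For the first step, I would begin by noting that each $\tau_i$ preserves $\mathscr{C}$: every object in the essential image of $\mathscr{C}_0$ is isomorphic, via its $\varphi_i$, to its own image under $\tau_i$, so $\tau_i$ preserves this essential image up to isomorphism and hence preserves the Tannakian subcategory $\mathscr{C}$ it generates; the resulting tensor autoequivalence of $\mathscr{C}$ together with $\eta_i|_{\mathscr{C}}$ is precisely the datum used to define $u_i$ and $W(\mathscr{C}_0)$. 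I would then introduce
\[
\beta\colon \mathscr{C}_0\longrightarrow \Rep_E(W(\mathscr{C}_0)),\qquad (\mathscr{E},\varphi_1,\dots,\varphi_n)\longmapsto \bigl(\omega(\mathscr{E}),\, g_i:=\omega(\varphi_i)\circ(\eta_i)_{\mathscr{E}}^{-1}\bigr),
\]
where $\omega(\mathscr{E})$ is given the tautological $\pi_1(\mathscr{C})$-action (legitimate, as $\mathscr{E}\in\mathscr{C}$), and check that $\beta$ indeed lands in $\Rep_E(W(\mathscr{C}_0))$: the $g_i$ commute with one another by the cocycle identity $\varphi_j\circ\tau_j(\varphi_i)=\varphi_i\circ\tau_i(\varphi_j)$ together with the commutativity of \eqref{sigma ij}, and each $g_i$ intertwines the $\pi_1(\mathscr{C})$-action with its $u_i$-twist because $\omega(\varphi_i)$ is $\pi_1(\mathscr{C})$-equivariant while $(\eta_i)_{\mathscr{E}}$ implements precisely that twist, by the very definition of $u_i$. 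This $\beta$ is clearly $E$-linear and tensor.

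\textbf{$\beta$ is an equivalence; construction of $\iota$.} Next I would verify that $\beta$ is fully faithful — a morphism of $\mathscr{C}_0$ is a morphism of $\mathscr{C}$ commuting with the $\varphi_i$, which under the faithful functor $\omega$ corresponds exactly to a $\pi_1(\mathscr{C})$-equivariant linear map commuting with the $g_i$ — and essentially surjective: given $(V,(g_i))$, reconstruct $\mathscr{E}\in\mathscr{C}$ with $\omega(\mathscr{E})=V$, observe that $g_i\circ(\eta_i)_{\mathscr{E}}\colon\omega(\tau_i\mathscr{E})\to\omega(\mathscr{E})$ is $\pi_1(\mathscr{C})$-equivariant and hence equals $\omega(\varphi_i)$ for a unique isomorphism $\varphi_i\colon\tau_i\mathscr{E}\xrightarrow{\sim}\mathscr{E}$ in $\mathscr{C}$, and check that commutativity of the $g_i$ forces the cocycle relation through \eqref{sigma ij}. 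Once $\beta$ is a tensor equivalence, composing its inverse with the canonical equivalence $\Rep_E(\pi_1(\mathscr{C}_0))\xrightarrow{\sim}\mathscr{C}_0$ produces a tensor functor $\Rep_E(\pi_1(\mathscr{C}_0))\to\Rep_E(W(\mathscr{C}_0))$ compatible with the forgetful fibre functors to $\Vect_E$ (both realize $\omega$), which by Tannakian duality is $\iota^*$ for a unique homomorphism $\iota\colon W(\mathscr{C}_0)\to\pi_1(\mathscr{C}_0)$; the $2$-commutative triangle is built into this construction. For the density: $\iota^*$ is an equivalence, in particular fully faithful, and a homomorphism of affine group schemes over a field has Zariski-dense image if and only if its pullback on representation categories is fully faithful (cf.\ \cite[Proposition~2.21]{DM}), so $\iota$ has dense image.

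\textbf{Part (ii): normality and its consequence.} Under $\beta$ the forgetful functor $U\colon\mathscr{C}_0\to\mathscr{C}$, $(\mathscr{E},\varphi_\bullet)\mapsto\mathscr{E}$, becomes restriction of representations along the normal-factor inclusion $j\colon\pi_1(\mathscr{C})\hookrightarrow\pi_1(\mathscr{C})\rtimes\mathbb{Z}^n=W(\mathscr{C}_0)$; since $U$ also corresponds, by Tannakian duality, to the closed immersion $\pi_1(\mathscr{C})\hookrightarrow\pi_1(\mathscr{C}_0)$ recalled just before Theorem~\ref{exact criterion}, that closed immersion equals $\iota\circ j$, and in particular $\pi_1(\mathscr{C})\subseteq\pi_1(\mathscr{C}_0)$ coincides with the closed subgroup $\iota(\pi_1(\mathscr{C}))$. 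Because $\pi_1(\mathscr{C})$ is normal in $W(\mathscr{C}_0)$, conjugation by any point of $\iota(W(\mathscr{C}_0))$ stabilizes $\iota(\pi_1(\mathscr{C}))=\pi_1(\mathscr{C})$, so the normalizer $N_{\pi_1(\mathscr{C}_0)}(\pi_1(\mathscr{C}))$ — a closed subgroup scheme — contains the Zariski-dense subgroup $\iota(W(\mathscr{C}_0))$ and hence equals $\pi_1(\mathscr{C}_0)$; this gives the normality. For the last assertion I would combine two inputs: every object of $\mathscr{C}$ is a subquotient of some $U(W)$ with $W\in\mathscr{C}_0$ (because $\mathscr{C}$ is generated by the essential image of $\mathscr{C}_0$, which is closed under $\otimes$ and duals; equivalently, a reflection of the closed immersion via Theorem~\ref{exact criterion}(ii)), and then — using that $U(W)$ is fixed up to isomorphism by every $\tau_i$, so that the $\langle\tau_1,\dots,\tau_n\rangle$-orbit of any subquotient of $U(W)$ is finite — one promotes such a subquotient to a subobject of a (larger) object in the essential image of $\mathscr{C}_0$, following the normal-subgroup arguments of \cite[Appendix]{DZ20} (see also \cite[Appendix~2]{HNY}, \cite[\S~3.4]{XZ}).

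\textbf{The main obstacle.} The substance of the argument lies in the verification that $\beta$ is a well-defined tensor equivalence — routine but exacting diagram chasing tying together the $\eta_i$, the $\sigma_{ij}$ and the cocycle relation — together with the ``subquotient $\Rightarrow$ subobject'' upgrade in part (ii); by contrast, once $\iota$ and its density are in place, the normality of $\pi_1(\mathscr{C})$ in $\pi_1(\mathscr{C}_0)$ drops out in essentially one line.
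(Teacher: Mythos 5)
Your proof is correct and follows essentially the same approach as the paper: you build the equivalence $\mathscr{C}_0\simeq\Rep_E(W(\mathscr{C}_0))$ by the explicit formula $g_i=\omega(\varphi_i)\circ(\eta_i)^{-1}_{\mathscr{E}}$ (which is the paper's construction, your version in fact spelling out the role of $\varphi_i$ more legibly than the paper's displayed formula), obtain $\iota$ and its Zariski density from the pro-algebraic completion, and derive normality of $\pi_1(\mathscr{C})$ by noting its normalizer is a closed subgroup containing the dense image of $\iota$. The one place where the paper is somewhat cleaner is the final ``subobject'' upgrade: rather than gesturing at the normal-subgroup arguments of the cited references, the paper simply applies Theorem~\ref{exact criterion}(iii.c) to the now-known short exact sequence $1\to\pi_1(\mathscr{C})\to\pi_1(\mathscr{C}_0)\to\pi_1(\mathscr{C}_0)/\pi_1(\mathscr{C})\to 1$.
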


\begin{proof}
	(i) We construct a functor
	\begin{displaymath}
		\mathscr{C}_0\to \Rep_E(W(\mathscr{C}_0)).
	\end{displaymath}
	Given an object $(\mathscr{E},\varphi_i)$ of $\mathscr{C}_0$, we construct a representation $\rho$ of $W(\mathscr{C}_0)(E)$ on $\omega(\mathscr{E})$. 
	For any element $(g,m_1,\dots,m_n)\in \pi_1(\mathscr{C})(E)\rtimes \mathbb{Z}^n$, we define $\rho(g,m_i)$ as the composition:
	\begin{displaymath}
		\omega (\mathscr{E})\xrightarrow{\prod \eta_{i}^{-m_i}} 
		\omega(\prod\tau_{i}^{m_i}(\mathscr{E})) \xrightarrow{g} 
		\omega(\prod\tau_{i}^{m_i}(\mathscr{E}))\xrightarrow{\prod \eta_{i}^{m_i}} 
		\omega(\mathscr{E}).
	\end{displaymath}
	In view the definition of $u_i$ \eqref{def Z action}, one checks that the above formula defines a representation. Then we obtain the above functor and we can check that it is an equivalence. 

	By the Tannakian reconstruction theorem \cite[Theorem 2.11]{DM},
	$\pi_1(\mathscr{C}_0)$ is the pro-algebraic completion of $W(\mathscr{C}_0)$ and the image of $\iota$ is therefore Zariski-dense. 

	(ii) The second assertion follows from the first one by condition \ref{exact criterion}(iii.c). 
	Let $H$ denote the normalizer of $\pi_1(\mathscr{C})$ in $\pi_1(\mathscr{C}_0)$. 
	As $\pi_1(\mathscr{C})$ is normal in $W(\mathscr{C}_0)$, the image of $\iota\colon W(\mathscr{C}_0)\to \pi_1(\mathscr{C}_0)$ is contained in $H$. 
	As $\iota$ has Zariski-dense image by (i), this implies that $H=\pi_1(\mathscr{C}_0)$ and the assertion follows. 
\end{proof}

\textit{Proof of Proposition \ref{ess of pione}.}
(i) We know the exactness at the left and the exactness on the right follows from \cite[Proposition 2.21]{DM}. We use Theorem \ref{exact criterion} to verify the exactness. Condition (a) follows from the definition. 
Given an object $(V,\varphi_1,\dots,\varphi_n)$ of $\mathscr{C}_0$, the maximal trivial subobject $W_0$ of $V$ in $\mathscr{C}$ is preserved by the $\varphi_i$. Therefore condition (b) is verified. Condition (c) is proved in Lemma \ref{lemma Weil gp}(ii).

(ii) The assertion follows from Lemma \ref{lemma Weil gp} applied to the category $\Vect_E$.
\hfill \qed

\subsection{Tannakian form of Drinfeld's lemma for overconvergent $F$-isocrystals}
\label{ss:Tannakian-gps}
\begin{secnumber} \label{sss:setting}
	Let $X$ be a $k$-variety. 
	We denote by $\Isocd(X/K)$ (resp. $\Isoc(X/K)$) the category of overconvergent (resp. convergent) isocrystals over $X$ relative to $K$. 
	If we apply the construction of \ref{setting} to the Tannakian category $\widetilde{\mathscr{C}}=\Isocd(X/K)$ (resp. $\Isoc(X/K)$) over $k$ and the ($s$-th) Frobenius pull-back functor $\tau=F_X^*$, then $\mathscr{C}_0$ corresponds to the category $\FIsocd(X/K)$ (resp. $\FIsoc(X/K)$) of overconvergent (resp. convergent) $F$-isocrystals over $X/(K,\sigma)$.
	We set $\Isod(X/K):=\mathscr{C}$, the Tannakian full subcategory of $\Isocd(X/K)$ generated by subquotients of overconvergent isocrystals which admit a ($s$-th) Frobenius structure, considered in \cite{abe-companion}.  
\end{secnumber}

\begin{secnumber}
	In the following, we assume $(k,\sigma)=(\mathbb{F}_q,\id_{\mathscr{O}_K})$. 

	The above construction can be generalized as follows. 
	For $i=1,\ldots,n$, let $X_i$ be a $k$-variety and set $X:=X_1\times_k \cdots \times_k X_n$. 
	We define a morphism $F_i\colon X\to X$ by the ($s$-th) Frobenius morphism $F_{X_i}$ of $X_i$ on the component $X_i$ and the identity map on other components.
	The morphisms $F_i$ commute with each other and their composition is equal to $F_X$. 
	The morphisms $F_i$ induce tensor equivalences 
	\[
	F_i^*\colon \Isocd(X/K)\to \Isocd(X/K)\quad \textnormal{(resp. } \Isoc(X/K)\to \Isoc(X/K)),\quad i=1,\ldots,n,
	\]
	commuting with each other and their composition is equal to $F^*_X$. 

	Given an overconvergent (resp. convergent) isocrystal $\mathscr{E}$ over $X/K$, \textit{a partial ($s$-th) Frobenius structure on $\mathscr{E}$} consists of isomorphisms $\varphi_i\colon F_i^{*}(\mathscr{E})\xrightarrow{\sim} \mathscr{E}$ for $i=1,\dots, n$ such that $\varphi_i\circ F_i^{*}(\varphi_j)=\varphi_j\circ F_j^{*}(\varphi_i)$. 
	For $i=1,\dots,n$, $\varphi_i$ is called the \textit{$i$-th partial Frobenius structure} (on $\mathscr{E}$). 
	Note that the composition of the $\varphi_i$ (in any order) forms a Frobenius structure on $\mathscr{E}$. 

	If we apply the construction of \ref{setting} to $(\widetilde{\mathscr{C}}=\Isocd(X/K)$ (resp. $\Isoc(X/K)$); $F_1^{*},\ldots,F_n^{*})$, then $\mathscr{C}_0$ corresponds to the category of overconvergent (convergent) isocrystals over $X$ with a partial Frobenius structure, which we denote by $\PhiIsocd(X/K)$ (resp. $\PhiIsoc(X/K)$). 
	We set $\Isod(X,\Phi/K):=\mathscr{C}$, the full subcategory of $\Isocd(X/K)$, generated by subquotients of overconvergent isocrystals which admits a $s$-th partial Frobenius structure.

	We denote by $F_i\textnormal{-}\Isocd(X/K)$ (resp. $F_i\textnormal{-}\Isoc(X/K)$) the category of pairs $(\mathscr{E},\varphi_i)$ consisting of an overconvergent (resp. convergent) isocrystal with an $i$-th partial Frobenius structure. 

	Let $k'/k$ be a finite extension of degree $a$ and set $K':=\rW(k')\otimes_{\rW(k)}K$. 
    (We denote the Witt vector functor by $\rW$ to reduce confusion with the notation $W(\mathscr{C}_0)$ from Lemma \ref{lemma Weil gp}.)
	Set $X_{k'}:=X\otimes_k k'$. Then we have a canonical functor of extension of scalars:
	\begin{equation}\label{eq:ext sca}
	\PhiIsoc(X/K)\to \PhiIsoc(X_{k'}/K'),\quad (\mathscr{E},\varphi_i)\mapsto (\mathscr{E}\otimes_K K',\varphi_i^{a}\otimes_K K').
\end{equation}
\end{secnumber}

\begin{secnumber} \label{sss:Tannakian gps}
	The category $\Isod(X,\Phi/K)$ defined above is a Tannakian category over $K$ and may not be neutral. 
	For any algebraic extension $L$ of $K$ in $\bQp$, we set $\Isod(X,\Phi/L):=\Isod(X,\Phi/K)\otimes_K L$. 
	When $L=\bQp$, this category is a neutral Tannakian category over $\bQp$ (with respect to a fiber functor) and we omit $/\bQp$ from the notation. 
	We denote by $\pi_1^{\Isod}(X,\Phi)$ (resp. $\pi_1^{\PhiIsocd}(X)$, resp. $\pi_1^{\PhiIsoc}(X)$) the Tannakian group of $\Isod(X,\Phi)$ (resp. $\PhiIsocd(X)$, $\PhiIsoc(X)$) over $\bQp$.
	When $n=1$ (i.e. partial Frobenius structures reduce to a Frobenius structure), we omit $\Phi$ from the notation or replace it with $F$.
	
	The pullback functor $p_{i}^*$ induces a canonical tensor functor
	\begin{eqnarray} \label{functor p_1*} \\
	\nonumber
		p_i^{*}\colon \FIsocd(X_i) &\to& \PhiIsocd(X) \quad 
		\textnormal{(resp. } \Isod(X_i)\to \Isod(X,\Phi), \textnormal{resp. } \FIsoc(X_i)\to \PhiIsoc(X)) \\
		\quad (\mathscr{E}_i,\varphi_i) &\mapsto& (p_i^{*}\mathscr{E}_i, \id,\ldots,p_i^{*}(\varphi_i),\ldots,\id). \nonumber 
	\end{eqnarray}
	By the Künneth formula, the functor $p_i^*\colon \Isod(X_i)\to \Isod(X,\Phi)$ is fully faithful.
	It induces a canonical $\bQp$-homomorphism
	\begin{displaymath}
		p_i^{\circ}\colon \pi_1^{\PhiIsocd}(X) \to \pi_1^{\FIsocd}(X_i) \quad \textnormal{(resp. } \pi_1^{\Isod}(X,\Phi)\to \pi_{1}^{\Isod}(X_i) , ~ \textnormal{resp. } \pi_1^{\PhiIsoc}(X) \to \pi_1^{\FIsoc}(X_i)).
	\end{displaymath}
\end{secnumber}

The Tannakian form of Drinfeld's lemma can be summarized as follows. Its proof will occupy most of the remainder of \S\ref{ss:Tannakian-gps}. 

\begin{theorem} \label{t:main thm}
	Assume each $X_i$ is a geometrically connected $k$-variety. 
	The following canonical homomorphisms are isomorphisms:
	\begin{equation} \label{Tannakian Drinfeld lemma}
		\prod_{i=1}^n p_i^{\circ}\colon
		\pi_1^{\Isod}(X,\Phi)\xrightarrow{\sim} \prod_{i=1}^n \pi_{1}^{\Isod}(X_i), \quad 
		\prod_{i=1}^n p_i^{\circ}\colon
		\pi_1^{\PhiIsocd}(X) \xrightarrow{\sim} \prod_{i=1}^n \pi_1^{\FIsocd}(X_i) . 
	\end{equation}
\end{theorem}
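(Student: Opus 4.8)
The plan is to deduce Theorem~\ref{t:main thm} from the abstract Tannakian exact sequence in Proposition~\ref{ess of pione} together with a suitable cohomological pushforward for arithmetic $\mathscr{D}$-modules. By induction on $n$ it suffices to treat $n=2$; write $X = X_1 \times_k X_2$, with partial Frobenii $F_1, F_2$. The key geometric input is Proposition~\ref{P:pushforward}: the pushforward $p_{2+}$ along the projection $p_2\colon X \to X_2$ carries an overconvergent isocrystal on $X$ admitting a partial Frobenius structure to an overconvergent isocrystal on $X_2$ (and similarly for $p_1$). Granting this, I would proceed in the following steps.

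First, I would set up the comparison of categories. Applying the construction of \S\ref{setting} to $\widetilde{\mathscr{C}} = \Isocd(X/K)$ with the single equivalence $\tau = F_1^*$ (while remembering the $F_2^*$-structure as extra data), one identifies $F_1\textnormal{-}\Isocd(X)$ with the category ``$\mathscr{C}_0$'' and $\Isod(X,\Phi)$ with a subcategory of ``$\mathscr{C}$''. More precisely, I want to realize $\PhiIsocd(X)$ as the $\mathscr{C}_0$ attached to the base category $F_1\textnormal{-}\Isocd(X)$ together with the residual operator induced by $F_2^*$, and $\Isod(X,\Phi)$ as the corresponding $\mathscr{C}$. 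The point of the pushforward proposition is that $p_{2+}$ realizes $\Isod(X,\Phi)$ (after remembering only the $F_2$-structure) inside $\FIsocd(X_2)$-modules, and conversely $p_2^*$ gives a fully faithful functor $\Isod(X_2) \hookrightarrow \Isod(X,\Phi)$ by the Künneth formula as already noted in \S\ref{sss:Tannakian gps}. The aim is to check that the triple
\[
\pi_1^{\Isod}(X,\Phi) \xrightarrow{\ p_2^{\circ}\ } \pi_1^{\Isod}(X_2), \qquad
\ker(p_2^{\circ}) \hookrightarrow \pi_1^{\Isod}(X,\Phi),
\]
is exact and that $\ker(p_2^{\circ}) \xrightarrow{p_1^{\circ}} \pi_1^{\Isod}(X_1)$ is an isomorphism, which then yields $\pi_1^{\Isod}(X,\Phi) \cong \pi_1^{\Isod}(X_1) \times \pi_1^{\Isod}(X_2)$ and, combined with the Frobenius-structure version of Proposition~\ref{ess of pione} relating $\Isod$ and $\PhiIsocd$, the full statement.

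Second, I would verify the hypotheses of the Esnault--Hai--Sun criterion (Theorem~\ref{exact criterion}) for the sequence attached to $p_2^*\colon \Isod(X_2) \to \Isod(X,\Phi)$. Full faithfulness of $p_2^*$ (Künneth) gives that $p_2^\circ$ is faithfully flat after checking the subquotient-closedness of the image, which reduces to a statement about partial-Frobenius-equivariant subobjects of $p_2^*\mathscr{E}_2$; here one uses that $X_1$ is geometrically connected so that a horizontal sub-isocrystal over $X$ of a pullback from $X_2$ descends. Condition (iii)(a) amounts to: an object of $\Isod(X,\Phi)$ whose restriction ``along $X_2$'' (i.e.\ under the forgetful functor to the kernel category, which one identifies with $\Isod(X_1,\Phi_1)$ via $p_{2+}$ and a specialization at a point of $X_2$) is trivial must be a pullback from $X_2$. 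This is exactly where the pushforward $p_{2+}$ does the work: triviality of the ``fiber'' means $p_{2+}$ of the internal $\FHom$ is a trivial $F$-isocrystal on $X_2$, hence the object is geometrically constant along $X_1$ and descends, using overconvergence crucially so that ``constant'' can be detected. Conditions (iii)(b) and (c) are the analogues of what was checked abstractly in Lemma~\ref{lemma Weil gp} and Proposition~\ref{ess of pione}: maximal trivial subobjects are preserved and every object of the kernel category is a subobject of one in the essential image — the latter following again from the normality argument of Lemma~\ref{lemma Weil gp}(ii).

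Third, having the exact sequence $1 \to \ker(p_2^\circ) \to \pi_1^{\Isod}(X,\Phi) \to \pi_1^{\Isod}(X_2) \to 1$, I would identify the kernel. The kernel is the Tannakian group of the category of objects of $\Isod(X,\Phi)$ killed by restriction to $X_2$; via $p_{2+}$ and fiber at a geometric point $\bar x_2$ of $X_2$ this category is equivalent to $\Isod(X_{\bar x_2}, \Phi_1) = \Isod(X_1 \otimes_k \kappa(\bar x_2), \Phi_1)$, and by base-change invariance of the Tannakian group (insensitivity to the residue field extension, as the fiber functor already lands in $\bQp$) this is $\pi_1^{\Isod}(X_1)$, compatibly with $p_1^\circ$. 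This gives the splitting and the product decomposition; one then checks the decomposition is induced by $(p_1^\circ, p_2^\circ)$ by unwinding the construction. Finally the $\PhiIsocd$ statement follows from the $\Isod$ statement by applying Proposition~\ref{ess of pione} on both sides — the groups $\pi_1^{\PhiIsocd}$ and $\pi_1^{\Isod}$ differ by the unipotent-type extension by $\pi_1(\mathscr{C}_{0,\cst}) = $ the pro-algebraic completion of $\mathbb{Z}^n$, and these extensions are compatible under $p_i^\circ$.

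The main obstacle I expect is Proposition~\ref{P:pushforward} itself — showing that the $\mathscr{D}$-module pushforward $p_{i+}$ of an overconvergent isocrystal on $X$ with partial Frobenius structure is again an overconvergent isocrystal (rather than merely a complex, or an object with singularities) on $X_i$. This is a genuine finiteness/regularity statement in the theory of arithmetic $\mathscr{D}$-modules: the partial Frobenius structure along $X_j$ ($j \neq i$) must be used to force the relative cohomology in degrees $\neq 0$ to vanish and to control the slopes/overholonomicity, presumably via a relative version of the theory of the constant isocrystal and weights, or via Abe's formalism of $F$-complexes. Everything downstream is a fairly mechanical, if lengthy, bookkeeping with the Esnault--Hai--Sun criterion and base change; the creative difficulty is concentrated in establishing the good behavior of $p_{i+}$.
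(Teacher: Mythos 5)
Your proposal follows the paper's proof in all essential respects: the key cohomological input is Proposition~\ref{P:pushforward}, the exactness of the relevant sequences is established via the Esnault--Hai--Sun criterion (Theorem~\ref{exact criterion}), and the passage from $\pi_1^{\Isod}$ to $\pi_1^{\PhiIsocd}$ uses Proposition~\ref{ess of pione}. Your correct identification of the pushforward statement as the place where the real work happens is also in line with the paper.

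There is, however, a genuine gap. Proposition~\ref{P:pushforward} is stated and proved only under the hypothesis that each $X_i$ is \emph{smooth and quasiprojective} over $k$; Theorem~\ref{t:main thm} assumes merely that each $X_i$ is geometrically connected. Every step of your argument (Künneth full faithfulness of $p_i^*$, the adjunction $p_i^* \dashv p_{i,*}$, injectivity of $p_i^*p_{i,*} \to \id$, constancy/descent along fibers) rests on this stronger regularity, but you never acknowledge the mismatch. The paper closes this gap by an additional and nontrivial reduction: using de Jong's alterations to produce smooth quasiprojective proper surjective covers $\pi_i\colon Y_i \to X_i$, then using proper-hypercovering descent for $\Isocd$, $\PhiIsocd$, and $\Isod(\cdot,\Phi)$ (Lemma~\ref{lem:descent}, based on Lazda's descent theorem) to transfer the exactness of the sequence from $Y = \prod Y_i$ down to $X$. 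Without this step your argument proves only a weaker statement, and the descent bookkeeping (in particular that subobjects/descent data on $\pi^*(V)$ arise uniquely from objects over $X_1$, which uses full faithfulness of $p_1^*$ over the double cover $Y_1^{(2)}$) is not automatic.

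Two smaller points. First, your identification of the kernel via a geometric point $\bar x_2$ of $X_2$ followed by an appeal to ``base-change invariance of the Tannakian group'' is sketchy; the paper instead enlarges the base field $k$ (permissible since the coefficient field is $\bQp$) to obtain an honest $k$-point, producing a section $u$ of the projection within the category of $k$-varieties, which immediately yields condition (iii.c) and the closed immersion property without any residue-field juggling. Second, your verification of (iii)(a) via ``$p_{2+}$ of the internal $\FHom$ is trivial, hence geometrically constant, hence descends'' is more heuristic than needed: once (iii.b) is established (maximal trivial subobject of $u^*\mathscr{E}$ equals $u^*p_1^*p_{1,*}\mathscr{E}$, using Poincaré duality and smooth base change to identify $g^*g_*u^* \cong u^*p_1^*p_{1,*}$), condition (iii.a) follows formally, as the paper notes.
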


We first establish some basic properties of the category $\Isod(X,\Phi)$. 

	\begin{prop}
		\label{embedding of partial Frobenius}	
		\textnormal{(i)} An irreducible object of $\Isocd(X)$ belongs to $\Isod(X,\Phi)$ if and only if it can be equipped with an $s'$-th partial Frobenius structure for some $s|s'$. 

		\textnormal{(ii) [D'Addezio--Esnault \cite{DE20}]}
	The category $\Isod(X,\Phi)$ is closed under extension. Every object of $\Isod(X,\Phi)$ can be embedded into an object of $\PhiIsocd(X)$. 

		\textnormal{(iii)} The category $\Isod(X,\Phi)$ is equivalent to the thick full subcategory of $\Isocd(X)$ generated by those objects which can be equipped with an $s'$-th partial Frobenius structure for some $s|s'$. 
	\end{prop}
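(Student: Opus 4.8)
The plan is to prove the three statements of Proposition~\ref{embedding of partial Frobenius} in a mutually supporting order, with part (ii) being the technical heart (and the part for which we lean on D'Addezio--Esnault). First I would unwind the definitions: by \ref{sss:setting}--\ref{sss:Tannakian gps}, $\Isod(X,\Phi)$ is the smallest Tannakian subcategory of $\Isocd(X)$ containing the essential image of $\PhiIsocd(X)$, i.e. it is generated under subquotients by overconvergent isocrystals admitting an $s$-th partial Frobenius structure. The key reduction is that ``admits an $s$-th partial Frobenius structure'' can be relaxed to ``admits an $s'$-th partial Frobenius structure for some $s \mid s'$'': if $\mathscr{E}$ carries partial Frobenius structures $\varphi_i \colon F_i^{*,s'/s}(\mathscr{E}) \xrightarrow{\sim} \mathscr{E}$ for the $(s')$-th partial Frobenii (note $F_i^{s'/s}$ is the $(s')$-th partial Frobenius on the $i$-th factor), then $\mathscr{E}$ is a subquotient (in fact a direct summand, after the usual descent/averaging over the cyclic group $\ZZ/(s'/s)$) of the Weil-restriction-type object obtained by pushing forward along the base change $X_{k'} \to X$ with $k' = \FF_{q^{s'/s}}$, which does carry an honest $s$-th partial Frobenius structure via \eqref{eq:ext sca}; hence it lies in $\Isod(X,\Phi)$. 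This handles the ``if'' direction of (i) and the forward inclusion in (iii).

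Next I would address part (ii). The statement that $\Isod(X,\Phi)$ is closed under extension is exactly the content we cite from D'Addezio--Esnault \cite{DE20}: an extension of two overconvergent isocrystals each admitting a partial Frobenius structure need not obviously admit one itself, but by the argument of \emph{loc.\ cit.} (a Frobenius-equivariant version of the fact that the relevant $\Ext$-groups are finite-dimensional, so that after replacing $\varphi_i$ by a suitable power the extension class becomes $\varphi_i$-fixed) it does, possibly only after passing to an $(s')$-th partial Frobenius for some multiple $s'$ of $s$ — which by the previous paragraph is harmless. Granting closure under extension, the second sentence of (ii) — every object of $\Isod(X,\Phi)$ embeds into an object of $\PhiIsocd(X)$ — follows from Lemma~\ref{lemma Weil gp}(ii) applied to the present situation: there we showed $\pi_1(\mathscr{C})$ is normal in $\pi_1(\mathscr{C}_0)$, equivalently (by condition \ref{exact criterion}(iii.c)) that every object of $\mathscr{C} = \Isod(X,\Phi)$ is a subobject of an object in the essential image of $\mathscr{C}_0 = \PhiIsocd(X)$. (One must check that the hypotheses of \ref{setting} are met with $\widetilde{\mathscr{C}} = \Isocd(X)$, $\tau_i = F_i^*$, which is routine: the $\sigma_{ij}$ come from the commutativity $F_i F_j = F_j F_i$ of partial Frobenii, and $\eta_i$ is the canonical identification after choosing a fiber functor.)

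For part (iii), let $\mathscr{D}$ denote the thick full subcategory of $\Isocd(X)$ generated by objects admitting an $s'$-th partial Frobenius structure for some $s \mid s'$. The inclusion $\mathscr{D} \subseteq \Isod(X,\Phi)$ follows from the first paragraph together with closure of a Tannakian subcategory under subquotients and extensions (now available from (ii)), since ``thick'' means closed under subquotients and extensions. Conversely, $\Isod(X,\Phi) \subseteq \mathscr{D}$: by definition $\Isod(X,\Phi)$ is generated under subquotients by $\PhiIsocd(X)$, and $\PhiIsocd(X) \subseteq \mathscr{D}$ trivially, so it suffices to know $\mathscr{D}$ is closed under subquotients — which is part of being thick. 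This gives the equivalence. Finally, for the ``only if'' direction of (i): an irreducible object $\mathscr{E}$ of $\Isocd(X)$ lying in $\Isod(X,\Phi) = \mathscr{D}$ is, by definition of the thick subcategory generated, a subquotient of a successive extension of objects $\mathscr{F}_j$ each admitting some $(s_j')$-th partial Frobenius structure; since $\mathscr{E}$ is irreducible it is a subquotient of a single such $\mathscr{F}_j$, and then one argues that an irreducible constituent of an isocrystal with a partial Frobenius structure again admits a (possibly higher) partial Frobenius structure — this is the standard argument (as in Abe, or the $\ell$-adic analogue for Weil sheaves) that the Frobenius permutes the finitely many irreducible constituents, so some power of each $\varphi_i$ stabilizes $\mathscr{E}$ and descends to it.

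\textbf{Main obstacle.} The crux is part (ii), i.e. closure under extension: without it, neither the embedding statement nor the ``thick subcategory'' description of (iii) is available, and the only honest input is the D'Addezio--Esnault result, so the real work is to correctly invoke \cite{DE20} and to verify that their hypotheses (which are stated for $F$-isocrystals, $n=1$) transfer to the partial-Frobenius setting — the point being that one applies the $n=1$ statement one partial Frobenius at a time, compatibly, using that the relevant $\Ext^1$ is finite-dimensional over $K$ and that the commuting endomorphisms $\varphi_i$ act on it, so a common power of all the $\varphi_i$ fixes any given extension class. The bookkeeping of ``passing to $s'$ with $s \mid s'$'' via \eqref{eq:ext sca} and descent along the cyclic Galois cover $X_{k'}/X$ is the other place where care is needed, but it is routine.
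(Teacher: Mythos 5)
Your overall logical skeleton matches the paper's, but there is a concrete problem with the construction you propose for the ``if'' direction of (i), and a smaller inaccuracy in how you gloss the D'Addezio--Esnault input.

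\textbf{The ``if'' direction of (i).} You claim that if $\mathscr{E}$ admits an $s'$-th partial Frobenius structure (with $s'=st$), then it is a summand of a ``Weil-restriction-type object obtained by pushing forward along the base change $X_{k'}\to X$ with $k'=\FF_{q^{t}}$,'' which is supposed to carry an $s$-th partial Frobenius structure via \eqref{eq:ext sca}. This conflates the constant-field extension $k'/k$ with the partial Frobenius endomorphisms $F_i\colon X\to X$, which are $k$-endomorphisms and do not arise from any base change along $k'/k$; restriction of scalars along $X_{k'}\to X$ commutes with the $F_i$, so it does not lower the order of an $(st)$-th partial Frobenius structure to $s$, and the $\Gal(k'/k)$-action it produces is unrelated to the partial Frobenii. (Also note \eqref{eq:ext sca} goes in the opposite direction, from $X$ to $X_{k'}$.) The paper instead takes the direct sum over all multi-indices of partial Frobenius pullbacks,
\[
\mathscr{F}:=\bigoplus_{j_1=0}^{t-1}\cdots\bigoplus_{j_n=0}^{t-1} F_1^{j_1*}\circ\cdots\circ F_n^{j_n*}(\mathscr{E}),
\]
on which the $s$-th partial Frobenii act by cyclic shift of each index $j_i$, closing each cycle with the given $\varphi_i$ (these shifts commute because the $\varphi_i$ commute), and $\mathscr{E}$ sits inside $\mathscr{F}$ as the summand $(j_1,\ldots,j_n)=(0,\ldots,0)$. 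You should replace your paragraph with this; without it the forward containment in (iii) is also unsupported.

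\textbf{Part (ii).} Your idea to obtain the embedding statement from Lemma~\ref{lemma Weil gp}(ii) (after checking the hypotheses of \ref{setting}) is a valid and arguably more self-contained alternative to the paper, which cites D'Addezio--Esnault for both halves of (ii) and leaves the passage from $n=1$ to general $n$ to the reader. However, your parenthetical description of the argument for closure under extension --- ``the relevant $\Ext$-groups are finite-dimensional, so that after replacing $\varphi_i$ by a suitable power the extension class becomes $\varphi_i$-fixed'' --- is not how \cite[Theorem 5.4]{DE20} goes, and as stated it does not work: the Frobenius action on $\Ext^1$ is a linear automorphism whose eigenvalues need not be roots of unity, so no power of $\varphi_i$ need fix a given class. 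Their argument is a universal-extension argument; since you are invoking the result rather than reproving it, this misstatement does not break your proof, but you should not present it as the mechanism.

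Your proof of (iii) and your treatment of the ``only if'' direction of (i) (via the orbit of the partial Frobenii on the finitely many irreducible constituents) are correct and essentially coincide with the paper's, with (iii) usefully filling in details the paper leaves implicit.
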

	\begin{proof}
	(i) Given an object $\mathscr{E}$ of $\PhiIsocd(X)$, the partial Frobenius pullbacks permute the isomorphism classes of the irreducible constituents of $\mathscr{E}$ in $\Isod(X)$. We thus conclude that an irreducible object $\Isod(X,\Phi)$ admits  $s'$-th partial Frobenius structure for some $s|s'$. 

	On the other hand, if an object $\mathscr{E}$ of $\Isocd(X)$ is irreducible equipped with an $s'(=st)$-th partial Frobenius structure for some $t\in \mathbb{N}$,
	then $\mathscr{E}$ is a subobject of \[
	\mathscr{F}:=\bigoplus_{j_1=0}^{t-1} \cdots \bigoplus_{j_n=0}^{t-1} F_1^{j_1*}\circ\cdots\circ F_{n}^{j_n*}(\mathscr{E}),
	\]
	 which admits an $s$-th Frobenius structure. 
	Therefore $\mathscr{E}$ belongs to $\Isod(X,\Phi)$. 

	(ii) When $n=1$, assertion (ii) is proved in \cite[Theorem 5.4]{DE20}. 
	The general case can be showed in a similar way as in \textit{loc.cit.}

	(iii) Assertion (iii) follows from assertions (i--ii). 
	\end{proof}

	\begin{secnumber}
	We will use the theory of holonomic (arithmetic) $\mathscr{D}$-modules and their six functors formalism developed in \cite{AC18,abe-companion}. 
	Let $L$ be an extension of $K$ in $\bQp$, $\mathfrak{T}:=\{k,\mathscr{O}_K,K,L\}$ the associated geometric base tuple and $\mathfrak{T}_F:=\{k,\mathscr{O}_K,K,L,s,\id_L\}$ the associated arithmetic base tuple \cite[1.4.10, 2.4.14]{abe-companion}. 

	Let $X$ be a $k$-variety. There exists an $L$-linear triangulated category $\rD(X/L)$ (resp. $\rD(X/L_F)$) relative to the geometric base tuple $\mathfrak{T}$ (resp. arithmetic base tuple $\mathfrak{T}_F$). 
	This category is denoted by $\rD^{\rb}_{\hol}(X/\mathfrak{T})$ or $\rD^{\rb}_{\hol}(X/L)$ (resp. $\rD^{\rb}_{\hol}(X/\mathfrak{T}_F)$ or $\rD^{\rb}_{\hol}(X/L_F)$) in \cite[1.1.1, 2.1.16]{abe-companion}. 
	For $\blacktriangle\in \{\emptyset,F\}$, there exists a \textit{holonomic t-structure} on $\rD(X/L_{\blacktriangle})$, whose heart is denoted by $\Hol(X/L_{\blacktriangle})$, called the \textit{category of holonomic modules}.
	We denote by $\hH^*$ the cohomological functor for holonomic t-structure. 
	
	The six functors formalism for $\rD(X/L)$ (resp. $\rD(X/L_F)$) has been established recently. We refer to \cite[\S~2.3]{abe-companion} for details and to \cite[1.1.3]{abe-companion} for a summary.
	\end{secnumber}

\begin{secnumber} \label{rem thick}
	Let $X$ be a smooth and quasiprojective geometrically connected $k$-variety of dimension $d_X$. 	
	The category $\Isod(X)$ (resp. $\FIsocd(X)$) is equivalent to the full subcategory $\Sm(X/\bQp)$ (resp. $\Sm(X/\bQpF)$) consisting of smooth objects of the category $\Hol(X/\bQp)[-d_X]$ (resp. $\Hol(X/\bQpF)[-d_X]$) of holonomic arithmetic $\mathscr{D}$-modules shifted by the dimension $-d_X$ \cite[1.1.3 (12)]{abe-companion}.  

	We briefly review the pushforward and pullback functors for a smooth morphism $f\colon X\to Y$ of relative dimension $d$ between quasiprojective geometrically connected $k$-varieties following \cite[1.2.8]{abe-companion}. 
	We have an adjoint pair $(f_+,f^+[2d])$
	\[
	f_+:\rD(X/L_{\blacktriangle}) \rightleftarrows \rD(Y/L_{\blacktriangle}): f^+[2d]. 
	\]
	The functor $f^+[d]$ is exact and induces a fully faithful functor \cite[Proposition 2.1.6]{XZ}:
	\begin{equation} \label{eq:fullyfaithful}
	f^+[d]\colon \Hol(Y/L_{\blacktriangle})\to \Hol(X/L_{\blacktriangle}).
\end{equation}
	If $d_X$ (resp. $d_Y$) denotes the dimension of $X$ (resp. $Y$), the functor 
	\begin{equation} \label{eq:pi pushforward Hol}
f_{*}:=\hH^{d}(f_{+}(-))[-d]\colon \Hol(X/L_{\blacktriangle})[-d_X]\to \Hol(Y/L_{\blacktriangle})[-d_Y]
\end{equation}
is a right adjoint of $f^*:=f^+$ and is left exact. 
Its right derived functor $\rR f_*$ is compatible with $f_+$. 

Moreover, there exists a canonical isomorphism (Poincar\'e duality \cite[1.5.13]{abe-companion}): 
	\[
	f^+(d)[2d]\xrightarrow{\sim} f^!.
	\]
\end{secnumber}

\begin{prop} \label{P:pushforward}
	Assume each $X_i$ is smooth and quasiprojective over $k$.

		\textnormal{(i)} The functor $p_{i,*}$ sends objects of $\Isod(X,\Phi)$ to $\Isod(X_i)$ (resp. $\PhiIsocd(X)$ to $\FIsocd(X_i)$). 

		\textnormal{(ii)} The functor $p_{i,*}\colon \Isod(X,\Phi)\to \Isod(X_i)$ is a right adjoint of $p_i^*$ \eqref{functor p_1*}.
		The adjoint morphism $\id\to p_{i,*}p_i^*$ is an isomorphism and $p_i^*p_{i,*}\to \id$ is injective.  
	\end{prop}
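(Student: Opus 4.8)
The plan is to reduce everything to a statement about $F$-isocrystals (the case $n=1$, already understood via Abe's formalism) by peeling off the partial Frobenius structures one factor at a time. First I would treat (i). Fix $i$; without loss of generality take $i=1$, and write $X = X_1 \times_k Y$ with $Y = X_2\times_k\cdots\times_k X_n$, so that $p_1\colon X\to X_1$ is the projection, which is smooth and quasiprojective of relative dimension $d_Y = \dim Y$. Given an object $(\mathscr{E},\varphi_1,\dots,\varphi_n)$ of $\PhiIsocd(X)$, I first observe that $p_{1,*}\mathscr{E}$ carries a natural $1$-st partial Frobenius structure: indeed $F_1$ is the base change of $F_{X_1}$ along $p_1$, so $p_1\circ F_1 = F_{X_1}\circ p_1$, and by smooth base change / compatibility of $p_{1,*}$ with proper pushforward (the paragraph \ref{rem thick}, together with \cite[1.2.8]{abe-companion}) we get a canonical isomorphism $F_{X_1}^* p_{1,*}\mathscr{E} \xrightarrow{\sim} p_{1,*}F_1^*\mathscr{E}$, which composed with $p_{1,*}(\varphi_1)$ yields the desired Frobenius structure on $p_{1,*}\mathscr{E}$. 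The remaining partial Frobenii $F_2,\dots,F_n$ are base-changed from endomorphisms of $Y$, so they act trivially on the $X_1$-factor; thus $p_{1,*}\mathscr{E}$, as a priori an object of $\rD^{\rb}_{\hol}(X_1/\bQpF)$, is genuinely equipped with a Frobenius structure. What remains is to show $p_{1,*}\mathscr{E}$ is concentrated in a single degree of the holonomic $t$-structure and is a \emph{smooth} holonomic module, i.e.\ lies in $\FIsocd(X_1)$ rather than merely in $\rD^{\rb}_{\hol}$. For this I would invoke the finiteness/constructibility of higher direct images in Abe's six-functor formalism, and then the key point: an $F$-isocrystal pushed forward along a smooth proper-ish family should have its $\mathscr{D}$-module cohomology being overconvergent $F$-isocrystals by Abe's $p$-adic analogue of the decomposition/purity theorems; the fact that $\mathscr{E}$ admits a full Frobenius structure (not just partial) is exactly what lets us apply those results. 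The $\Isod(X,\Phi)\to\Isod(X_1)$ statement then follows from the $\PhiIsocd/\FIsocd$ statement by passing to subquotients, using Proposition \ref{embedding of partial Frobenius}(ii) to embed an object of $\Isod(X,\Phi)$ into one of $\PhiIsocd(X)$, pushing forward, and using left-exactness of $p_{1,*}$ to control the subobject.

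For (ii), the adjunction $(p_i^*, p_{i,*})$ on the level of $\FIsocd$ and $\FIsoc$ is inherited directly from the adjunction $(f^+, f_*)$ recorded in \ref{rem thick}, once one checks the unit and counit are compatible with the (partial) Frobenius structures — which is automatic from the naturality of the base-change isomorphisms used in (i). To see that the unit $\id\to p_{i,*}p_i^*$ is an isomorphism: since $X_i$ is geometrically connected and the fibers of $p_i$ are $Y_{\overline{x}}$, geometrically connected $k$-varieties, the question reduces (after the reductions above, and base change in the $\mathscr{D}$-module formalism \cite[1.2.8]{abe-companion}) to the statement $\mathbf{R}\Gamma_{\dR}(Y_{\overline{x}}, \mathscr{O}) $ having $\rH^0 = \bQp$ — concretely, that for a geometrically connected smooth $k$-variety the zeroth de Rham cohomology of the constant isocrystal is one-dimensional and the higher ones do not interfere after the degree shift $\hH^{d}(-)[-d]$ in \eqref{eq:pi pushforward Hol}. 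Finally, injectivity of the counit $p_i^*p_{i,*}\to\id$ follows formally: $p_i^*=p_i^+$ is fully faithful by \eqref{eq:fullyfaithful}, so $p_i^*$ is a faithful exact functor between the abelian categories in question, and a faithful exact functor reflects monomorphisms; alternatively, compose the counit with the unit-induced splitting to see it is a split injection on each object, or note that the counit is the image under the fully faithful $p_i^*$ of a morphism that one checks to be injective.

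I expect the main obstacle to be the verification in (i) that $p_{i,*}$ actually lands in the \emph{heart} of the holonomic $t$-structure and in the \emph{smooth} (overconvergent isocrystal) subcategory, rather than producing a genuine complex or a non-smooth holonomic module. In the $\ell$-adic world this is where one uses that higher direct images of lisse sheaves along a smooth family need not be lisse in general — so the real content is that the presence of a Frobenius structure, via Abe's weight formalism and the companion results of \cite{abe-companion}, forces semisimplicity and hence the pushforward decomposes, with each piece an overconvergent $F$-isocrystal on $X_i$. Getting this cleanly, and in the generality where $Y$ is itself a multi-factor product carrying leftover partial Frobenii, is the technical heart; the adjunction and the unit/counit assertions in (ii) are then comparatively formal consequences of the six-functor package.
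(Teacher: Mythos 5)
Your proposal correctly identifies the technical heart of the proposition — showing that $p_{1,*}\mathscr{E}$ is a \emph{smooth} holonomic module on $X_1$, not just a holonomic complex — but the mechanism you propose for establishing this does not work, and the paper's actual argument is quite different.

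You suggest invoking Abe's analogues of decomposition/purity theorems, asserting that pushing an $F$-isocrystal along a ``smooth proper-ish family'' produces overconvergent $F$-isocrystals in each cohomological degree. The problem is that the projection $p_1\colon X_1\times_k X'\to X_1$ is almost never proper (its fiber is $X'$), so weight-theoretic decomposition theorems simply do not apply to $p_{1,+}$. In the $\ell$-adic analogue this is exactly the place where one cannot appeal to constructibility of $Rp_{1,*}$ naively; the partial Frobenius structure is needed in a much more specific way than ``it forces semisimplicity.'' Your proposal acknowledges a worry here but offers no concrete repair.

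What the paper actually does is quite different. First, $p_{1,*}\mathscr{E}$ is automatically smooth over some dense open $U_1 \subset X_1$; the issue is extending across the boundary. The argument proceeds by (a) noting that the adjunction unit $p_1^*(p_{1,*}\mathscr{E}|_{U_1}) \hookrightarrow \mathscr{E}|_{U_1\times X'}$ is injective; (b) passing to a perfect closure $k'$ of $k(X')$ (so $X'$ becomes a point, effectively making the picture into one over $X_{1,k'}$) and applying Kedlaya's extension theorem \cite[Proposition~5.3.1]{Ked07} to realize the restricted pushforward as a subobject of $\iota(\mathscr{E})$ on $X_{1,k'}$; (c) descending this extension to $X_1$ over $K$ by Kedlaya--Shiho purity plus constancy of local monodromy around a smooth boundary divisor; and finally (d) invoking full faithfulness of restriction to $U_1$ to identify the extension with $p_{1,*}\mathscr{E}$. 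For step (i)(b), where the input object of $\Isod(X,\Phi)$ may lack an honest partial Frobenius structure, the paper embeds it into something that does, reduces to the curve case via Shiho's cut-by-curves criterion, and then uses the nearby cycle functor and \cite[Corollary~2.3.4]{XZ} to transfer smoothness along the injection. None of this machinery is weight-theoretic.

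A second, smaller issue: your proposed justifications for injectivity of the counit $p_i^*p_{i,*}\to\id$ do not quite go through as written. Full faithfulness of $p_i^*$ implies it reflects monomorphisms among morphisms \emph{between objects in its image}, but the counit $p_i^*p_{i,*}\mathscr{E}\to\mathscr{E}$ has target $\mathscr{E}$, which is typically not in the essential image of $p_i^*$, so you cannot simply realize the counit as $p_i^*$ of a morphism. The paper's argument is cleaner: the kernel of the counit is an isocrystal all of whose fibers at closed points vanish (injectivity on fibers being the elementary observation that the fiber of $p_i^*p_{i,*}\mathscr{E}$ at a point $(x,y)$ sits inside the fiber of $\mathscr{E}$), hence the kernel itself vanishes. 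Your treatment of the unit isomorphism via the $\rH^0$ computation of the fibers of $p_1$ is essentially what the Künneth formula argument in the paper gives, so that part is fine.
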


\begin{secnumber}
	Let $Y:=Y_1\times_k Y_2$ be a product of two geometrically connected $k$-varieties, $k'$ a perfect field over $k$, $\mathscr{O}_{K'}:=\rW(k')\otimes_{\rW(k)}\mathscr{O}_K$, equipped with a lift $\sigma'$ of the $s$-th Frobenius automorphism of $k'$ defined by that of $\rW(k')$ and $\id_{\mathscr{O}_K}$. We set $K':=\mathscr{O}_{K'}[\frac{1}{p}]$. 
	
	We denote by $F_1\textnormal{-}\Isocd(Y_{1,k'}/K')$ (resp. $F_1\textnormal{-}\Isoc(Y_{1,k'}/K')$) the category of pairs $(\mathscr{E},\phi)$ consisting of an object $\mathscr{E}$ of $\Isocd(Y_{1,k'}/K')$ (resp. $\Isoc(Y_{1,k'}/K')$) and an isomorphism $\phi\colon (F_{Y_1}\otimes \id_{k'})^*(\mathscr{E})\xrightarrow{\sim} \mathscr{E}$. 
	
	A point $i\colon \Spec(k')\to Y_2$ induces a natural functor: 
	\begin{eqnarray} \label{eq:partial Frob to FIsocd}
	\quad\quad	\iota\colon F_1\textnormal{-}\Isocd(Y/K) &\to& F_1\textnormal{-}\Isocd(Y_{1,k'}/K'), \quad 
	\textnormal{(resp. } 
	F_1\textnormal{-}\Isoc(Y/K)\to F_1\textnormal{-}\Isoc(Y_{1,k'}/K') ),\\
	(\mathscr{E},\varphi)&\mapsto& ((\id_{Y_1}\times i)^*(\mathscr{E}),(\id_{Y_1}\times i)^*\varphi). \nonumber 
\end{eqnarray}
When $k'$ is a finite extension of $k$ of degree $a$, we associate an $F$-isocrystal over $Y_{1,k'}/K'$ to an object of $F_1\textnormal{-}\Isocd(Y_{1,k'}/K')$ by composing its Frobenius structure $a$ times.  
	
	When $Y_2=\Spec(k)$, we regard the above functor as the functor of extension of scalars:
	\[
	\iota_{k'/k}\colon \FIsocd(Y/K) \to F_1\textnormal{-}\Isocd(Y_{1,k'}/K').
	\]
\end{secnumber}

\begin{secnumber}
	\textit{Proof of Proposition \ref{P:pushforward}}. 
	(i)
	It suffices to prove the assertion for objects of $\Isod(X,\Phi/K)$. We may assume $i=1$. We set $X':=\prod_{i=2}^n X_i$.
	
	(a) We first prove the assertion for an object $\mathscr{E}$ of $\Isod(X,\Phi/K)$ equipped with a partial Frobenius structure $(\varphi_1,\ldots,\varphi_n)$. 
	There exists an open subscheme $U_1$ of $X_1$ such that $p_{1,*}(\mathscr{E})|_{U_1}$ is smooth. In view of their fibers, the adjoint morphism of overconvergent isocrystals:
	\[
		p_1^*(p_{1,*}(\mathscr{E})|_{U_1})\to \mathscr{E}|_{U_1\times_k X'}
	\]
		is injective. 
		The first partial Frobenius $\varphi_1$ on $\mathscr{E}$ induces a Frobenius structure $\phi_1$ on $p_{1,*}(\mathscr{E})|_{U_1}$ 		and the above morphism is compatible with the first partial Frobenius structures $p_1^*(\phi_1)$ and $\varphi_1$. 
		Let $k'$ be a perfect closure of $k(X')$ and $K':=\rW(k')[\frac{1}{p}]$. 
		By the exactness of the functor $\iota$, defined in \eqref{eq:partial Frob to FIsocd}, we obtain an injection in $F_1\textnormal{-}\Isocd(U_{1,k'}/K')$:
		\[
		\iota_{k'/k}(p_{1,*}(\mathscr{E})|_{U_1}) \hookrightarrow \iota(\mathscr{E}|_{U_{1}\times_k X'}).
		\]
		By \cite[Proposition 5.3.1]{Ked07}, the left hand side extends to a subobject $\mathscr{F}'$ of $\iota(\mathscr{E})$ of $\Isocd(X_{1,k'}/K')$. 

		We claim that there exists an overconvergent isocrystal $\mathscr{F}$ of $\Isocd(X_1/K)$ extending $p_{1,*}(\mathscr{E})|_{U_1}$ such that $\iota_{k'/k}(\mathscr{F})\simeq \mathscr{F}'$. 
	By Kedlaya--Shiho's purity theorem \cite[Theorem 5.1]{kedlaya-isocrystals}, we may assume that the boundary $D:=X_1-U_1$ is a smooth divisor. 
	Since the local monodromy of $\mathscr{F}'$ around $D_{k'}$ is constant \cite[Theorem 5.2.1]{Ked07}, then so is the local monodromy of $p_{1,*}(\mathscr{E})|_{U_1}$ around $D$. Then the claim follows. 
		
	Moreover, the Frobenius structure on $p_{1,*}(\mathscr{E})|_{U_1}$ extends to $\mathscr{F}$. 
	Finally, by full faithfullness of pullback for holonomic $\mathscr{D}$-modules along $U_1\to X_1$ \cite[Proposition 2.1.6]{XZ}, we deduce that $\mathscr{F}$ is isomorphic to $p_{1,*}(\mathscr{E})$.
	Then the assertion in this case follows. 
	
		(b) An object $\mathscr{E}$ of $\Isod(X,\Phi)$ can be embedded into an object $\mathscr{F}$, which can be equipped with a partial Frobenius structure (Proposition \ref{embedding of partial Frobenius}(ii)).  
		As $p_{1,*}$ is left exact, we have an injection to a smooth object $\mathscr{N}$ of $\Hol(X_1/\bQpF)[-d_{X_1}]$: 
		\begin{displaymath}
			\mathscr{M}:=p_{1,*}(\mathscr{E})\hookrightarrow \mathscr{N}:=p_{1,*}(\mathscr{F}).
		\end{displaymath}

		It remains to show that $\mathscr{M}$ is also smooth. 
		We first consider the case where $X_1$ is a curve. 
		Let $U_1$ be an open subset of $X_1$ on which $\mathscr{M}$ is smooth.
		By applying \cite[Corollary 2.3.4]{XZ} to $\mathscr{M}|_{U_1}$, the exactness of pullback on smooth modules and of the nearby cycle functor, we deduce the smoothness of $\mathscr{M}$ from that of $\mathscr{N}$.

In general, there is a dense open subscheme $j\colon U_1\to X_1$ and a smooth object $\mathscr{L}$ on $U_1$ such that $\mathscr{M}|_{U_1}\simeq \mathscr{L}$. 
Let $c\colon C\to X_1$ be a morphism from a smooth curve to $X$ such that $c(C)\cap U_1$ is non-empty and $p_c\colon C\times_k X'\to C$ the projection. 
By the above argument, $p_{c,*}( (c\times\id)^*\mathscr{E})$ is smooth and the pullback $c^*\mathscr{L}$ extends to an overconvergent isocrystal on $C$. 
By Shiho's cut-by-curves theorem \cite{Shiho11} and Kedlaya--Shiho's purity theorem, $\mathscr{L}$ can be extended to an overconvergent isocrystal over $X$. 
By full faithfulness of pullback along $U_1\to X_1$, we deduce that $\mathscr{M}$ is isomorphic to the extension of $\mathscr{L}$ to $X_1$. This finishes the proof. 

	(ii) The isomorphism $\id\to p_{i,*}p_i^*$ follows from the Künneth formula \cite[Proposition 1.1.7]{abe-companion}. 
	In view of fibers at closed points, the injectivity of $p_i^*p_{i,*}\to \id$ follows.  \hfill \qed
\end{secnumber}

\begin{secnumber} 
\label{pf main thm}
We now turn to the proof of Theorem \ref{t:main thm}. 
We first prove Theorem \ref{t:main thm} for $\pi_1^{\Isod}$ under the additional hypothesis that each $X_i$ is \textit{smooth and quasiprojective} over $k$. 

\textit{Proof.} As we work with arithmetic $\mathscr{D}$-modules with coefficients in $\bQp$, we may enlarge the base field $k$ as in \cite[1.4.11]{abe-companion}. 
		Therefore we may assume there exists a $k$-point $x$ of $X_1$. 
		We consider the following diagram
	\begin{displaymath}
		\xymatrix{
			X'\ar[r]^-{u=(x,\id)} \ar[d]_{g} & X=X_1\times_k X' \ar[d]_{p_1} \\
			\Spec(k)\ar[r]^-{x} & X_1
		}
	\end{displaymath}	

	We denote by $\PhipIsocd(X')$ the category of overconvergent isocrystals over $X'$ with a partial Frobenius structure and $\Isod(X',\Phi')$ the Tannakian subcategory of $\Isocd(X')$ generated by those object which can be equipped with some partial Frobenius structure.  
	The morphism $u$ induces a tensor functor 
	\begin{equation} \label{functor u*}
		u^{*}\colon \Isod(X,\Phi)\to \Isod(X',\Phi')\quad \textnormal{(resp. } \PhiIsocd(X) \to \PhipIsocd(X')) \quad \mathscr{E}\mapsto u^{*}(\mathscr{E})
	\end{equation}
	and a homomorphism 
	$u^{\circ}\colon \pi_1^{\Isod}(X',\Phi')\to \pi_1^{\Isod}(X,\Phi)$.
	Consider the following commutative diagram:
	\begin{equation} \label{eq:compare tannakian exact sequences}
		\xymatrix{
		1\ar[r] & \pi_1^{\Isod}(X',\Phi') \ar[r]^{u^{\circ}} \ar[d]_{\prod_{i=2}^n p_i^{\circ}} & \pi_1^{\Isod}(X,\Phi) \ar[r]^{p_1^{\circ}} \ar[d]_{\prod_{i=1}^n p_i^{\circ}} & \pi_1^{\Isod}(X_1) \ar[r] \ar@{=}[d] & 1 \\
		1\ar[r] & \prod_{i=2}^n \pi_1^{\Isod}(X_i)\ar[r] & \prod_{i=1}^n \pi_1^{\Isod}(X_i) \ar[r] & \pi_1^{\Isod}(X_1) \ar[r] & 1
		}
	\end{equation}
	Here the upper sequence is defined as above and the lower sequence is exact. 
	By induction, it suffices to show the exactness of the upper sequence. 
	In the following, we will do it by checking the conditions of Theorem \ref{exact criterion}
	for the functors:
	\begin{displaymath}
		\Isod(X_1)\xrightarrow{p_1^{*}} \Isod(X,\Phi) \xrightarrow{u^{*}} \Isod(X',\Phi').
	\end{displaymath}
	where $p_1^{*}$ is defined as in \eqref{functor p_1*}.
	We use the functor $p_{i,*}$ defined in Proposition~\ref{P:pushforward}(i).

	(i)
	Let $\mathscr{E}$ be an object of $\Isod(X_1)$ and $\mathscr{F}$ a subobject of $p_1^*(\mathscr{E})$. We need to show that $\mathscr{F}$ is the image under $p_1^*$ of a subobject of $\mathscr{E}$. If $\mathscr{G}$ denotes the quotient $p_1^*(\mathscr{E})/\mathscr{F}$, by applying $p_1^*p_{1,*}$ we obtain a commutative diagram:
	\begin{displaymath}
		\xymatrix{
		0\ar[r] & p_1^*p_{1,*}\mathscr{F}\ar[r] \ar[d]_{\alpha}& p_1^*\mathscr{E} \ar[r]^{\delta} \ar[d]_{\beta} & p_1^*p_{1,*}\mathscr{G} \ar[d]_{\gamma} &\\
		0\ar[r] & \mathscr{F}\ar[r] & p_1^*\mathscr{E} \ar[r] & \mathscr{G}\ar[r] &0 
		}
	\end{displaymath}
	Here $\beta$ is the identity, while $\alpha,\gamma$ are injective by Proposition~\ref{P:pushforward}(ii). Then $\delta$ is surjective. We deduce that $\alpha$ is an isomorphism. 
	Then we conclude that $\pi_1^{\Isod}(X,\Phi) \to \pi_1^{\Isod}(X_1)$ is faithfully flat by \cite[Proposition 2.21]{DM}. 

	(ii) and (iii.c) Since $u$ is a section of $p'\colon X=X_1\times_k X'\to X'$, we deduce condition (iii.c) of Theorem \ref{exact criterion} for $u^{\circ}\colon \pi_1^{\Isod}(X',\Phi')\to \pi_1^{\Isod}(X,\Phi)$. Then $u^{\circ}$ is a closed immersion by \cite[Proposition 2.21]{DM}. 

	(iii.b) 
	Given an object $\mathscr{E}$ of $\Isod(X,\Phi)$, the maximal trivial subobject of $u^{*}\mathscr{E}$ in the category $\Isod(X',\Phi')$ is $g^{*}g_{*}u^{*}\mathscr{E}$. 
	Since $u^+(\mathscr{E})[2d]\simeq u^!(\mathscr{E})$, we have $g^{*}g_{*}u^{*}\mathscr{E}\simeq u^* p_1^*p_{1,*}\mathscr{E}$ by smooth base change. 
	Then condition (b) follows from the fact that $p_1^*p_{1,*}\to \id$ is injective by Proposition~\ref{P:pushforward}(ii).

	(iii.a) If an object $\mathscr{E}$ of $\Isod(X,\Phi)$ comes from the essential image of $p_1^{*}$, then $u^*(\mathscr{E})$ is trivial. 
	Conversely, in view of (iii.b), if $u^*(\mathscr{E})$ is trivial, then $\mathscr{E}\simeq p_1^*p_{1,*}(\mathscr{E})$. This verifies condition (a). \hfill\qed
\end{secnumber}

We now upgrade the previous argument to eliminate the smooth quasiprojective hypothesis.

\begin{lemma} \label{lem:descent}
The categories $\Isocd(X), \PhiIsocd(X), \Isod(X, \Phi)$ admit descent with respect to any proper hypercoverings of the $X_i$.
\end{lemma}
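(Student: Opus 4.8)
The plan is to deduce all three statements from the single fact that the fibered category $V\mapsto\Isocd(V)$ on $k$-varieties satisfies descent along proper hypercoverings, and then to transport this through the additional structures defining $\PhiIsocd$ and $\Isod(-,\Phi)$. Fix proper hypercoverings $a_i\colon Y_{i,\bullet}\to X_i$ and form the simplicial $k$-scheme $Y_\bullet$ with $Y_m=Y_{1,m}\times_k\cdots\times_k Y_{n,m}$ and the product face and degeneracy maps. For each $i$ the base change of $a_i$ along the projection $X\to X_i$ is a proper hypercovering of $X$, and $Y_\bullet$ is the fibre product over $X$ of these $n$ hypercoverings; since properness, surjectivity and the coskeleton condition are stable under base change, and the fibre product over a fixed base of finitely many hypercoverings is again one (standard facts about hypercoverings), $a_\bullet\colon Y_\bullet\to X$ is a proper hypercovering \cite{stacks-project}. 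If each $Y_{i,m}$ is smooth and quasiprojective — which we may arrange by iterating de Jong's alterations — then so is each $Y_m$. Finally, functoriality of the $s$-th absolute Frobenius makes each $a_i$ into an $F_{X_i}$-equivariant morphism, so each partial Frobenius $F_i$, and each power $F_i^m$, lifts to a compatible endomorphism of $Y_\bullet$ over the corresponding endomorphism of $X$.

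Granting proper hypercovering descent for $\Isocd$, the case of $\PhiIsocd(X)$ is then formal: since the $F_i$ act on the whole simplicial diagram compatibly with $a_\bullet$, giving an object of $\PhiIsocd(X)$ amounts to giving an object of $\Isocd(X)$ — equivalently a descent datum on $Y_\bullet$ — together with isomorphisms $\varphi_i\colon F_i^*\mathscr{E}\xrightarrow{\sim}\mathscr{E}$ satisfying the cocycle relation, and one concludes by applying descent for $\Isocd$ to $\mathscr{E}$ and to the source and target of each $\varphi_i$. For $\Isod(X,\Phi)$ one needs the slightly less formal fact that the passage to ``smallest Tannakian subcategory of $\Isocd$ generated by subquotients of objects admitting a partial Frobenius structure'' commutes with the descent equivalence $\Isocd(X)\simeq\varprojlim\Isocd(Y_\bullet)$ (the $2$-limit over the simplicial diagram). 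I would establish this using that every object of $\Isocd(X)$ has finite length together with Proposition \ref{embedding of partial Frobenius}: since $\Isod(-,\Phi)$ is closed under subobjects, quotients and extensions, membership is tested on irreducible constituents, and an irreducible object lies in $\Isod(X,\Phi)$ iff it admits an $s'$-th partial Frobenius structure for some $s\mid s'$; this property is compatible with the descent equivalence (such a structure restricts to one compatible with the descent datum, and conversely a compatible one descends), so $\Isod(X,\Phi)$ is exactly the full subcategory of $\varprojlim\Isocd(Y_\bullet)$ carved out by the analogous conditions on the $Y_m$, namely $\varprojlim\Isod(Y_\bullet,\Phi)$.

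I expect the main obstacle to be the input itself: proper hypercovering descent for the bare category of overconvergent isocrystals. When $X$ is smooth this should follow from the realization $\Isocd(X)\hookrightarrow\Hol(X/\bQp)[-d_X]$ as the full subcategory of smooth objects, combined with the proper descent built into the six-functor formalism for arithmetic $\mathscr{D}$-modules \cite{abe-companion}, once one checks that the smoothness condition descends along the proper hypercovering $a_\bullet$ (using full faithfulness of pullback along smooth morphisms \cite[Proposition 2.1.6]{XZ}). For $X$ a product of possibly singular $X_i$, one reduces to the smooth case by the insensitivity of $\Isocd$ to nilpotents and by Shiho's cut-by-curves theorem \cite{Shiho11} together with Kedlaya--Shiho purity \cite{kedlaya-isocrystals}; alternatively, the descent for $\Isocd$ may be extracted from cohomological descent for rigid cohomology along proper hypercoverings à la Chiarellotto--Tsuzuki. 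Making this input precise, rather than the subsequent propagation to $\PhiIsocd$ and $\Isod(-,\Phi)$, is where essentially all the work lies.
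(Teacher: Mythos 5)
Your architecture matches the paper's exactly: establish proper hypercovering descent for $\Isocd$, propagate formally to $\PhiIsocd$ via the compatibility of the $F_i$ with the hypercovering, and deduce descent for $\Isod(X,\Phi)$ using Proposition~\ref{embedding of partial Frobenius}. The one difference is that the paper does not re-derive the base-case input: proper hypercovering descent for $\Isocd$ is cited directly from Lazda \cite[Theorem~5.1]{lazda}, which is precisely the result you identify as ``the main obstacle''; your alternative sketches are in the right spirit, but the $\mathscr{D}$-module route would only cover the smooth case, whereas Lazda's note settles the general statement.
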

\begin{proof}
Descent for $\Isocd(X)$ is established in \cite[Theorem~5.1]{lazda}.
This then implies descent for $\PhiIsocd(X)$; we deduce descent for $\Isod(X, \Phi)$ using Proposition~\ref{embedding of partial Frobenius}.
\end{proof}

\begin{secnumber}
\textit{Proof of Theorem \ref{t:main thm} for $\pi_1^{\Isod}$ with general $X_i$}. 	
By de Jong's alterations theorem \cite{dejong-alterations}, there exist smooth, connected and quasiprojective $k$-varieties $Y_i$ and proper surjective, generic \'etale maps $\pi_i\colon Y_i\to X_i$ for $i=1,2,\ldots,n$. 
We use the hypercovering produced by the $\pi_i$ to show that the 
upper sequence in \eqref{eq:compare tannakian exact sequences} is exact, by again checking the conditions of Theorem \ref{exact criterion}.

We set $Y:=\prod_{i=1}^n Y_i$ and $\pi\colon Y\to X$ to be the product $\prod_{i=1}^n \pi_i$. 
By induction, it suffices to treat the case where $Y_i=X_i$ for $i=2,\dots,n$. Consider the following diagram:
\[
\xymatrix{
	1\ar[r] & \pi_1^{\Isod}(X') \ar[r]^{u^{\circ}} \ar@{=}[d] & \pi_1^{\Isod}(Y,\Phi) 
	\ar[r]^{p_1^{\circ}} \ar[d]_{\pi^{\circ}} & \pi_1^{\Isod}(Y_1) \ar[r] \ar[d]_{\pi_1^{\circ}} & 1 \\
	1\ar[r] & \pi_1^{\Isod}(X') \ar[r]^{u^{\circ}} & \pi_1^{\Isod}(X,\Phi) \ar[r]^{p_1^{\circ}} & \pi_1^{\Isod}(X_1) \ar[r] & 1 
}
\]
By induction hypotheses, we may assume the exactness of the second line.  
We will deduce the exactness of the first line from the second one. 
The above diagram corresponds to the functor:
\[
\xymatrix{
\Isod(X_1) \ar[r]^{p_1^*} \ar[d]_{\pi_1^*} & \Isod(X,\Phi) \ar[r]^{u^*} \ar[d]_{\pi^*} & \Isod(X') \ar@{=}[d] \\ 
	\Isod(Y_1) \ar[r]^{p_1^*} \ar[d] & \Isod(Y,\Phi) \ar[r]^{u^*} \ar[d] & \Isod(X') \\
	\Isod(Y_1^{(2)}) \ar[r] & \Isod(Y^{(2)}, \Phi) & 
}
\]
Here $Y_1^{(2)}:=Y_1\times_{X_1}Y_1$ and $Y^{(2)}:=Y\times_X Y$. 
We check the conditions of \ref{exact criterion} for the first line. 

(i)  
Let $U$ be an object of $\Isod(X_1)$ and $V_0$ a subobject of $p_1^*(U)$. 
By exactness of the second line, we deduce that there exists a subobject $U_{0,Y}$ of $\pi_1^*(U)$ over $Y_1$ such that $p_1^*(U_{0,Y})\simeq \pi^*(V_0)$ as 
subobjects of $\pi^*(p_1^*(U))$.
As the functor $p_1^*\colon \Isod(Y_1^{(2)})\to \Isod(Y^{(2)},\Phi)$ is fully faithful, the descent data on $\pi^*(V_0)$ gives rise to a descent data on $U_0$. 
In this way, we obtain a subobject $U_{0}$ of $U$ over $X_1$, sent to $V_0\to p_1^*(U)$ via $p_1^*$. 
Hence the map $p_1^\circ\colon \pi_1^{\Isod}(X,\Phi)\to \pi_1^{\Isod}(X_1)$ is faithfully flat.

Conditions (ii) and (iii.c) follow from the same argument as in the smooth and quasiprojective case. 

(iii.a) Let $V$ be an object of $\Isod(X,\Phi)$ such that $u^*(V)$ is trivial. Then there exists an object $U_Y$ over $Y_1$ such that $p_1^*(U_Y)\simeq \pi^*(V)$. 
The descent data on $\pi^*(V)$ induces a descent data on $U_Y$, which gives rise to an object $U$ on $X_1$, sent to $V$ via $p_1^*$. 

(iii.b) Let $V$ be an object of $\Isod(X,\Phi)$, $W_0$ be the maximal trivial subobject of $u^*(V)$. 
By exactness of the second line, there exists a subobject $V_0$ of $\pi^*(V)$ on $Y$, sent to $W_0$ via $u^*$. 
Since $u^*$ is faithful, we deduce that the descent data on $\pi^*(V)$ preserves $V_0$. 
This gives rise to a subobject of $V$, sent to the trivial object $W_0$.
\hfill \qed 
\end{secnumber}
\begin{secnumber}
	\textit{Proof of Theorem \ref{t:main thm} for $\pi_1^{\PhiIsocd}$}. 
	By \ref{ess of pione}(i), we have a commutative diagram: 
	\begin{displaymath}
		\xymatrix{
		1\ar[r] & \pi_1^{\Isod}(X,\Phi) \ar[r] \ar[d]_{\prod p_i^{\circ}} & \pi_1^{\PhiIsocd}(X) \ar[r] \ar[d]_{ \prod p_i^{\circ}} & \pi_1^{\PhiIsocd}(X)^{\cst} \ar[r] \ar[d] & 1 \\
		1\ar[r] & \prod_{i=1}^n \pi_1^{\Isod}(X_i) \ar[r] & \prod_{i=1}^n \pi_1^{\FIsocd}(X_i) \ar[r] & \prod_{i=1}^n \pi_1^{\FIsocd}(X_i)^{\cst} \ar[r] & 1
		}
	\end{displaymath}
	where the first and second lines are exact. 
	By \ref{ess of pione}(ii), the right vertical arrow identifies with the projection of the pro-algebraic completion of $\mathbb{Z}^n$ to the pro-algebraic completion of $\mathbb{Z}$ and is therefore an isomorphism. 
	The assertion follows from that the left vertical arrow is an isomorphism.  \hfill \qed
\end{secnumber}

\section{Drinfeld's lemma for convergent $\Phi$-isocrystals}
\label{s:conv case}
In this section, we assume $(k,\sigma)=(\mathbb{F}_q,\id_{\mathscr{O}_K})$, and $X_i$ denotes a $k$-variety for $i=1,\ldots,n$ and $X:=\prod_{i=1}^n X_i$. 
Except in \S~\ref{sss:partial Frob slope}--\ref{def:unit-root}, we assume each $X_i$ is smooth and geometrically connected over $k$. 

\subsection{Unit-root and diagonally unit-root convergent $\Phi$-isocrystals}
\begin{secnumber} \label{sss:partial Frob slope}
	We first define the \textit{partial Frobenius slopes} at a closed point of $X$. 

	Let $(\mathscr{E},\varphi_i)$ be an object of $\PhiIsoc(X/K)$, $x\in |X|$ a closed point, $K_x:=\rW(k_x)\otimes_{\rW(k)}K$ and $a:=[k_x:k]$. 
	We take the extension of scalars to $k_x$ \eqref{eq:ext sca} and then take its fiber $(\mathscr{E}_x,\varphi_1^a|_x,\ldots,\varphi_n^a|_x)$ at $x$, which is an object of $\PhiIsoc(\Spec(k_x)/K_x)$ with respect to the $sa$-th power of Frobenius, that is a $K_x$-vector space together with commuting linear automorphisms $\varphi_i^a|_x$. 
	This allows us to define the \textit{$i$-th partial Frobenius slopes of $(\mathscr{E},\varphi_i)$ at $x$} for $i=1,\ldots,n$ by Dieudonn\'e--Manin theory. 

	We have a surjection from points of $X$ to products of points of the $X_i$:
	\[
	\pi\colon |X|\twoheadrightarrow \prod_{i=1}^n |X_i|.
	\]

	Let $(x_i)_{i=1}^n\in \prod |X_i|$ be a tuple of closed points. To simplify notation, assume that there exists a finite extension $k'$ of $k$ such that $k_{x_i}\simeq k'$ (this can always be enforced by enlarging $k$). 
	We have a decomposition 
	\begin{equation} \label{eq:decomposition points}
		\prod_k x_i\simeq \bigsqcup_G \Spec(k'),
	\end{equation}
	indexed by $G:=\left( \prod \Gal(k'/k) \right)/\Gal(k'/k)$, for the diagonal action. 
	For an object $(\mathscr{E},\varphi_i)$ of $\PhiIsoc(\prod_k x_i/K)$, $\mathscr{E}$ corresponds to a direct sum of vector spaces over $K'$, indexed by $G$. 
	If $a=[k':k]$, then the $a$-th power of each partial Frobenius $\varphi_{i}^a$ preserves each component. 
\end{secnumber}

	\begin{lemma}
		\label{l:partial Frob}
		The partial Frobenius slopes at a point $x\in |X|$ depend only on its image $\pi(x)=(x_1,\ldots,x_n)\in \prod |X_i|$. 
	\end{lemma}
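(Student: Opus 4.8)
The plan is to reduce the statement to a pair of elementary observations: first, that the partial Frobenius slopes at a closed point $x$ of $X$ are unchanged if we replace $x$ by another point lying in the same fiber of $\pi$ over $(x_1,\ldots,x_n)$; and second, that for fixed $(x_i)$ all points of $\prod_k x_i$ give rise to isomorphic tuples $(\mathscr{E}_x,\varphi_1^a|_x,\ldots,\varphi_n^a|_x)$ up to a relabeling that does not affect the Dieudonn\'e--Manin invariants. Concretely, fix a tuple $(x_1,\ldots,x_n)\in\prod|X_i|$ and, after enlarging $k$ (which is harmless, since slopes are insensitive to finite base extension once one rescales the Frobenius power accordingly), assume all $k_{x_i}$ equal a common finite extension $k'$ of $k$, of degree $a$. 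The fiber $\pi^{-1}(x_1,\ldots,x_n)$ is the set of closed points of the finite $k$-scheme $\prod_k x_i$, and by \eqref{eq:decomposition points} this scheme is a disjoint union of copies of $\Spec(k')$ indexed by $G=\bigl(\prod\Gal(k'/k)\bigr)/\Gal(k'/k)$ under the diagonal action.

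The key computation is then the following. For $(\mathscr{E},\varphi_i)$ in $\PhiIsoc(X/K)$, pulling back to $\prod_k x_i$ yields an object of $\PhiIsoc(\prod_k x_i/K)$, i.e.\ a $K'$-vector space decomposed as $\bigoplus_{g\in G}V_g$ on which each $\varphi_i^a$ acts preserving the components (the last sentence of \S\ref{sss:partial Frob slope}). I would show that all the summands $V_g$, equipped with their operators $(\varphi_1^a,\ldots,\varphi_n^a)$, are isomorphic as objects with $n$ commuting semilinear operators over $(K',\sigma^a)$: indeed $G$ is a quotient of $\prod\Gal(k'/k)$ by the diagonal, and the individual Galois factors act by permuting the $V_g$ compatibly with the $\varphi_i^a$, so any two summands are carried into one another by a composite of these permutations. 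Since the $i$-th partial Frobenius slopes at a point $x$ of $\prod_k x_i$ are, by definition, the Dieudonn\'e--Manin slopes of the operator $\varphi_i^a$ on the summand $V_g$ corresponding to $x$, and these slopes are preserved under any isomorphism of the pair $(V_g,\varphi_i^a)$, they are independent of $g$; hence they depend only on $(x_1,\ldots,x_n)$. Finally, because the formation of the fiber $(\mathscr{E}_x,\varphi_i^a|_x)$ at a closed point $x\in|X|$ with image $(x_1,\ldots,x_n)$ factors through pullback to $\prod_k x_i$ followed by taking the summand at $x$, the slopes computed on $X$ agree with those computed on $\prod_k x_i$, and the lemma follows.

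The main obstacle is bookkeeping rather than substance: one must check carefully that the slope invariants are genuinely intrinsic to each pair $(V_g,\varphi_i^a)$ over the correct coefficient field with the correct Frobenius power, so that the two normalizations (enlarging $k$ to make the $k_{x_i}$ coincide, and replacing $\varphi_i$ by $\varphi_i^a$) do not introduce spurious rescalings that could depend on $g$. Once it is granted that all $V_g$ are abstractly isomorphic as $n$-tuples of commuting $\sigma^a$-semilinear automorphisms — which is immediate from the transitivity of the residual Galois action on $G$ — the independence of the slopes is formal, since Dieudonn\'e--Manin theory attaches the slope multiset to the isomorphism class of a single semilinear operator and is blind to the remaining $\varphi_j$, $j\neq i$.
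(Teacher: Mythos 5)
Your proof is correct and follows the same approach as the paper's: both identify the fiber $\pi^{-1}(x_1,\ldots,x_n)$ with the closed points of $\prod_k x_i$ and use the transitive action of $\prod\Gal(k'/k)$ on the components of the decomposition \eqref{eq:decomposition points}, compatible with the $\varphi_i^a$, to produce isomorphisms between the fibers that preserve the slope data. Your write-up spells out the bookkeeping that the paper's one-line proof leaves implicit, but the underlying argument is identical.
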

	\begin{proof}
		We keep the above notation. Since $G$ acts transitively on each component of \eqref{eq:decomposition points}, the action induces isomorphisms between the pullback of $(\mathscr{E}_x,\varphi_{i,x}^a)$ to each component and the claim follows. 
\end{proof}

\begin{defn} \label{def:unit-root}
	Let $(\mathscr{E},\varphi_1,\ldots,\varphi_n)$ be an object of $\PhiIsoc(X)$. 

	(i) We say the $i$-th partial Frobenius structure $\varphi_i\colon F_i^*(\mathscr{E})\to \mathscr{E}$ is \textit{unit-root}, if its slope at each closed point is zero.  
	
	(ii) We say $\mathscr{E}$ is \textit{unit-root}, if every partial Frobenius structure $\varphi_i$ is unit-root. 

	(iii) We say $\mathscr{E}$ is \textit{diagonally unit-root} if the associated convergent $F$-isocrystal is unit-root. 
\end{defn}

\begin{secnumber} \label{sss:crew equiv}
	In the following, we assume each $X_i$ is a smooth geometrically connected $k$-variety. Then so is $X$.  

	Let $x$ be a closed point of $X$ and $\overline{x}$ the associated geometric point. 
	Recall \cite{crew-f} that there exists a canonical equivalence between the category of continuous $K$-representations of $\pi_1^{\et}(X,\overline{x})$ and the category of unit-root convergent $F$-isocrystals over $X/K$:
	\begin{equation}\label{eq:crew}
		\Rep^{\cont}_{K}(\pi_1^{\et}(X,\overline{x}))\xrightarrow{\sim} \FIsoc^{\ur}(X/K).
	\end{equation}
	Via the equivalence between the left hand side and the category $\LS(X,K)$ of lisse $K$-sheaves over $X$, we have:
	\begin{equation} \label{eq:crew LS}
		\LS(X,K)\xrightarrow{\sim} \FIsoc^{\ur}(X/K).
	\end{equation}
	The above equivalence is compatible with  the following operations: 
	\begin{itemize}
	\item[(i)] The pullback functoriality along a morphism between smooth connected $k$-varieties. 
	\item[(ii)] Extensions of scalars. 
	\end{itemize}

	Recall that a lisse $K$-sheaf $\mathbb{L}$ over $X$ is equipped with the Frobenius correspondence \cite[XIV=XV]{SGAV}:
	\begin{equation} \label{eq:phiV}
		\phi_{\mathbb{L}}\colon F_X^*(\mathbb{L})\xrightarrow{\sim} \mathbb{L}. 
	\end{equation}
\end{secnumber}

\begin{lemma}
	Let $(\mathscr{E},\varphi)$ be a unit-root convergent $F$-isocrystal over $X$ and $\mathbb{L}$ the associated sheaf. 
	Then the Frobenius structure $\varphi\colon (F_X^*\mathscr{E},F_X^*\varphi)\xrightarrow{\sim} (\mathscr{E},\varphi)$ gives rise to the above isomorphism $\phi_{\mathbb{L}}$. 
\end{lemma}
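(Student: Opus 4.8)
The plan is to recognize both $\varphi$ and $\phi_{\mathbb L}$ as the components, on the two sides of Crew's equivalence, of one and the same natural isomorphism between Frobenius pullback and the identity functor. First I would note that for every object $(\mathscr E,\varphi)$ of $\FIsoc^{\ur}(X/K)$ the map $\varphi$ is itself a morphism in $\FIsoc^{\ur}(X/K)$ from $F_X^*(\mathscr E,\varphi)$ to $(\mathscr E,\varphi)$ — the compatibility to be checked is the tautology $\varphi\circ F_X^*(\varphi)=\varphi\circ F_X^*(\varphi)$ — and that, by the definition of a morphism of $F$-isocrystals, this family of maps is natural in $(\mathscr E,\varphi)$; thus $\varphi$ defines a natural isomorphism $F_X^*\xrightarrow{\sim}\id$ of endofunctors of $\FIsoc^{\ur}(X/K)$, which is moreover compatible with pullback along any morphism of smooth connected $k$-varieties (from the functoriality of Frobenius pullback of $F$-isocrystals and the identity $F_X\circ f=f\circ F_Y$) and which is the identity on the unit object. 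Symmetrically, the Frobenius correspondences $\phi_{\mathbb L}$ of \eqref{eq:phiV} assemble into a natural isomorphism $F_X^*\xrightarrow{\sim}\id$ of endofunctors of $\LS(X,K)$ with the same two properties — functoriality of the Frobenius correspondence and its triviality on the constant sheaf being recalled from \cite[XIV=XV]{SGAV}. Since $(k,\sigma)=(\mathbb F_q,\id_{\mathscr O_K})$, the operation $F_X^*$ on $\FIsoc^{\ur}(X/K)$ is $K$-linear and is precisely pullback along the morphism $F_X\colon X\to X$, so by compatibility (i) of \S\ref{sss:crew equiv} the equivalence \eqref{eq:crew LS} intertwines the two functors $F_X^*$ and hence transports the natural isomorphism ``$\varphi$'' to some natural isomorphism $\psi\colon F_X^*\xrightarrow{\sim}\id$ on $\LS(X,K)$ which is again compatible with pullbacks and again trivial on the constant sheaf (an equivalence sends the unit object to the unit object and identity morphisms to identity morphisms).

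It then remains to prove $\psi=\phi_{\bullet}$. The composite $\psi\circ\phi_{\bullet}^{-1}$ is a natural automorphism of $\id_{\LS(X,K)}$, giving for each $\mathbb L$ an automorphism $\mu_{\mathbb L}$ of $\mathbb L$; because this is compatible with pullback along the inclusion of any closed point $\iota_x\colon\Spec(k_x)\to X$, and because pulling an endomorphism of $\mathbb L$ back to a closed point returns the same underlying linear map (as $X$ is connected), it suffices to show that $\mu$ becomes trivial after restriction along a single closed point, i.e. to establish $\psi=\phi_{\bullet}$ over $\Spec(k')$ for $k'$ a finite extension of $k$. Over such a point $F_{\Spec(k')}$ acts on lisse $K$-sheaves by the ($q$-power) Frobenius and on unit-root convergent $F$-isocrystals by the Frobenius structure, and what has to be verified is that these two actions match under \eqref{eq:crew}.

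\textbf{The one step that is not purely formal} is this last comparison over a point: one must reconcile the way $\phi_{\mathbb L}$ encodes the Frobenius action on an unramified Galois representation with the orientation built into the Frobenius structure of a unit-root $F$-isocrystal, i.e. match the two sign conventions. This is how the equivalence \eqref{eq:crew} is normalized and can be read off from its construction in \cite{crew-f}; once it is in place, the pullback-compatibilities of $\psi$ and $\phi_{\bullet}$ recalled above propagate the identity $\psi=\phi_{\bullet}$ from points to arbitrary $X$, which is exactly the assertion of the lemma.
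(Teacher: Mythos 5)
Your proof is correct and takes a genuinely different route from the paper's. The paper unwinds Crew's construction directly: it reduces to $\mathscr{O}_K/p^n$-coefficients and an affine $X$ admitting a smooth formal lift with a Frobenius lift, then passes to a finite \'etale cover trivializing $\mathbb{L}$, at which point both $\varphi$ and $\phi_{\mathbb{L}}$ become the canonical identifications attached to the trivial object and the assertion is vacuous. You instead package $\varphi$ and $\phi_\bullet$ as natural isomorphisms $F_X^*\Rightarrow\id$ of endofunctors on the two sides of Crew's equivalence and use the pullback-compatibility of \eqref{eq:crew} — applied once with $f=F_X$ to transport $\varphi$ into $\LS(X,K)$, and once with the closed immersions $\iota_x$ to reduce the resulting comparison $\psi\overset{?}{=}\phi_\bullet$ to a single closed point via faithfulness of $\iota_x^*$ on a connected $X$. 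The paper's reduction terminates in a vacuous base case, whereas yours makes completely explicit which functoriality properties of \eqref{eq:crew} carry the argument, at the cost that the base case over $\Spec(k')$ is itself a normalization statement about Crew's construction — which you correctly flag, and which the paper records afterward in Remark~\ref{rem:Frob on local system}. One small point worth making explicit in a write-up: restriction along $\iota_x$ involves an extension of scalars from $K$ to $K_x$, so you are also invoking compatibility (ii) of \S\ref{sss:crew equiv}, not just (i).
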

\begin{proof}
	In view of the construction of \eqref{eq:crew}, we may reduce to the case where $\mathbb{L}$ is a lisse $\mathscr{O}_K/p^n \mathscr{O}_K$-sheaf for some $n$, and we may assume $X$ is affine and admits a smooth lifting $\XX_n$ to $\mathscr{O}_K/p^n\mathscr{O}_K$, equipped with a Frobenius lift. 
	After taking pullback along a finite \'etale morphism trivializing $\mathbb{L}$, we may moreover assume that $\mathbb{L}$ is a trivial $\mathscr{O}_K/p^n\mathscr{O}_K$-module. 
	In this case, the assertion is clear. 
\end{proof}
\begin{remark} \label{rem:Frob on local system}
	When $X=x=\Spec(k)$, we will view $\phi_{\mathbb{L}}$ as an endomorphism on $\mathbb{L}$ in such a way that the action on the geometric fiber $\mathbb{L}_{\overline{x}}$ coincides with the action of the geometric Frobenius $F_k\in \Gal(\overline{k}/k)$.
\end{remark}

\begin{secnumber} \label{sss:Crew diagonal unitroot}
	Let $i$ be an integer $\in [1,n]$. The $i$-th partial Frobenius $F_i$ of $X$ is a homeomorphism and induces an equivalence of \'etale topoi of $X$.  
	Let $(\mathscr{E},\varphi_1,\ldots,\varphi_n)$ be a diagonally unit-root convergent $\Phi$-isocrystal over $X$ and $\mathbb{L}$ the lisse $K$-sheaf over $X$ associated to $(\mathscr{E},\varphi)$. 
	The $i$-th partial Frobenius structure $\varphi_i$ induces an isomorphism of sheaves:
	\[
	\phi_i\colon F_i^*(\mathbb{L})\xrightarrow{\sim} \mathbb{L}.  
	\]
	The isomorphisms $\phi_i$ commute with each other in the following sense: for any $1\le i,j \le n$, the identifications
	\[
	F_i^* (F_j^* (\mathbb{L})) \cong (F_i \circ F_j)^* (\mathbb{L}) = (F_j \circ F_i)^* (\mathbb{L}) \cong F_j^*( F_i^* (\mathbb{L}))
	\]
	induce an equality $\phi_j\circ F_j^*(\phi_i)=\phi_i\circ F_i^*(\phi_j)$.
	The composition of $\phi_i$ coincides with the isomorphism $\phi_{\mathbb{L}}$ \eqref{eq:phiV}. 
	The above construction is clearly functorial. 
\end{secnumber}

\begin{prop} \label{P:Crew diagonal unitroot}
	The category of diagonally unit-root $\Phi$-isocrystals over $X$ is equivalent to the category of pairs $(\mathbb{L},\{\phi_i\}_{i=1}^n)$ consisting of a lisse $K$-sheaf $\mathbb{L}$ over $X$ together with isomorphisms $\phi_i\colon F_i^*(\mathbb{L})\xrightarrow{\sim} \mathbb{L}$ commuting to each other and whose composition is $\phi_{\mathbb{L}}$ \eqref{eq:phiV}. 
	The morphisms in the latter category are morphisms of $\LS(X,K)$ compatible with the $\phi_i$. 
\end{prop}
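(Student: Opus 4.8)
The plan is to deduce this from Crew's equivalence \eqref{eq:crew LS}; note that the functor from diagonally unit-root $\Phi$-isocrystals to pairs $(\mathbb{L},\{\phi_i\})$ has already been constructed in \S\ref{sss:Crew diagonal unitroot}, where we also verified that the $\phi_i$ commute and that their composite is $\phi_{\mathbb{L}}$, so all that remains is to show this functor is an equivalence.

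First I would record the compatibility of \eqref{eq:crew LS} with the partial Frobenius pullbacks. Since $F_i\colon X\to X$ is a morphism of smooth geometrically connected $k$-varieties, compatibility (i) of \S\ref{sss:crew equiv} applies to it; and $F_i^*$ preserves the unit-root condition, because the property that Frobenius acts with $p$-adic unit eigenvalues at every closed point is stable under pullback of $F$-isocrystals along a $k$-morphism. Together with the lemma preceding Remark~\ref{rem:Frob on local system}, which identifies the Frobenius structure of a unit-root $F$-isocrystal with the Frobenius correspondence \eqref{eq:phiV} of the associated lisse sheaf, this yields: if $(\mathscr{E},\varphi)$ in $\FIsoc^{\ur}(X/K)$ corresponds to $\mathbb{L}$, then $(F_i^*\mathscr{E},F_i^*\varphi)$ corresponds to $F_i^*\mathbb{L}$ with its Frobenius correspondence $F_i^*(\phi_{\mathbb{L}})$. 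As in the proof of that lemma, this reduces to the case of a trivial lisse sheaf on an affine $X$ carrying a lifted Frobenius, where it is transparent; I expect this bookkeeping to be the only genuinely delicate point.

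Granting it, full faithfulness is formal: a morphism of diagonally unit-root $\Phi$-isocrystals is in particular a morphism of the underlying unit-root $F$-isocrystals, hence corresponds via \eqref{eq:crew LS} to a morphism of lisse $K$-sheaves, and by functoriality of \eqref{eq:crew LS} and the intertwining just recorded, compatibility with the $\varphi_i$ is equivalent to compatibility with the $\phi_i$. For essential surjectivity, given $(\mathbb{L},\{\phi_i\})$ let $(\mathscr{E},\varphi)$ be the unit-root $F$-isocrystal corresponding under \eqref{eq:crew LS} to $\mathbb{L}$ equipped with $\phi_{\mathbb{L}}$; transporting each $\phi_i$ through \eqref{eq:crew LS} and the intertwining above gives an isomorphism $\varphi_i\colon F_i^*(\mathscr{E})\xrightarrow{\sim}\mathscr{E}$ of convergent isocrystals. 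The commutation relations among the $\phi_i$ yield those among the $\varphi_i$ by functoriality; and since $\varphi_1\circ F_1^*(\varphi_2)\circ\cdots$ and $\varphi$ both correspond under the fully faithful \eqref{eq:crew LS} to $\phi_1\circ F_1^*(\phi_2)\circ\cdots=\phi_{\mathbb{L}}$, they agree. Thus $(\mathscr{E},\varphi_1,\dots,\varphi_n)$ is an object of $\PhiIsoc(X)$ whose composed Frobenius structure is $\varphi$, hence diagonally unit-root by Definition~\ref{def:unit-root}(iii), and by construction our functor sends it to $(\mathbb{L},\{\phi_i\})$. The main obstacle, to reiterate, is the partial-Frobenius compatibility of Crew's equivalence; the rest is bookkeeping with the Tannakian/sheaf-theoretic functoriality already in place.
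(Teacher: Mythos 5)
Your proposal is correct and follows essentially the same route as the paper: both constructions go through Crew's equivalence \eqref{eq:crew LS} and its functoriality along the partial Frobenius morphisms $F_i$, and both build the quasi-inverse by transporting each $\phi_i$ to an $i$-th partial Frobenius $\varphi_i$ and checking commutativity and the composite condition from those of the $\phi_i$. The only difference is one of emphasis: you flag the intertwining of \eqref{eq:crew LS} with $F_i^*$ as the delicate step and outline its verification, whereas the paper simply invokes functoriality; the mathematical content is the same.
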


\begin{proof}
	We construct a quasi-inverse of the functor in \S~\ref{sss:Crew diagonal unitroot}. 
	Let $(\mathbb{L},\{\phi_i\}_{i=1}^n)$ be a collection of data as above and $(\mathscr{E},\varphi)$ the unit-root $F$-isocrystal over $X$ associated to $\mathbb{L}$. 
	By functoriality of \eqref{eq:crew LS}, $\phi_i$ induces an $i$-th partial Frobenius structure $\varphi_i$ on $\mathscr{E}$. 
	The commutativity of the $\varphi_i$, and the fact that the composition of the $\varphi_i$ equals $\varphi$, follow from the corresponding properties of the $\phi_i$. 
	This construction is clearly functorial and provide a quasi-inverse of \S~\ref{sss:Crew diagonal unitroot}. 
\end{proof}

\begin{cor} \label{c:constant partial Frob}
	Let $(\mathscr{E},\varphi_i)$ be a diagonally unit-root $\Phi$-isocrystal over $X$. 
	Then for $i=1,2,\ldots,n$, the $i$-th partial Frobenius slopes of $\mathscr{E}$ are constant on $|X|$. 	
\end{cor}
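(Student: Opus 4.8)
The plan is to fix $i$ (by symmetry, say $i=1$), write $X':=X_2\times_k\cdots\times_k X_n$ so that $X=X_1\times_k X'$ and $F_1=F_{X_1}\times\id_{X'}$, and to prove that the first partial Frobenius slopes of $\mathscr{E}$ at a closed point $x$ depend only on the coordinate $x_1$ of $\pi(x)$, and — by the same argument applied to the complementary Frobenius structure — only on $(x_2,\dots,x_n)$; these two facts together force them to be constant on $|X|$. By Lemma~\ref{l:partial Frob} the slopes at $x$ already depend only on $\pi(x)$, so this suffices. (When $n=1$ the hypothesis merely says $\mathscr{E}$ is a unit-root convergent $F$-isocrystal, so all its Frobenius slopes vanish and there is nothing to prove.)

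The key step is a restriction to the slice $\{x_1\}\times_k X'$. Put $a_1:=[k(x_1):k]$ and $K':=\rW(k(x_1))\otimes_{\rW(k)}K$. After the extension of scalars \eqref{eq:ext sca} to $k(x_1)$ we may view $x_1$ as a $k(x_1)$-point, and then $\{x_1\}\times X'\cong X'_{k(x_1)}$. Since $F_{X_1}$ induces on the closed point $x_1$ the $q$-power Frobenius $\sigma_{x_1}$ of $\Spec k(x_1)$, which has order $a_1$, the morphism $F_1$ restricts on $X'_{k(x_1)}$ to $\sigma_{x_1}\times\id_{X'}$, and in particular $F_1^{a_1}$ restricts to the identity. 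Hence the $a_1$-fold iterate $\varphi_1\circ F_1^{*}(\varphi_1)\circ\cdots$, restricted to $X'_{k(x_1)}$, is a genuine automorphism $u$ of the convergent isocrystal $\mathscr{E}|_{X'_{k(x_1)}}$ over $K'$. Bookkeeping with the cocycle identities $\varphi_1\circ F_1^{*}(\varphi_j)=\varphi_j\circ F_j^{*}(\varphi_1)$ shows that $u$ is horizontal and commutes with the composite Frobenius structure $\varphi|_{X'_{k(x_1)}}$; since $\mathscr{E}$ is diagonally unit-root, $(\mathscr{E},\varphi)|_{X'_{k(x_1)}}$ lies in $\FIsoc^{\ur}(X'_{k(x_1)}/K')$ and $u$ is an endomorphism of it there. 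By Crew's equivalence \eqref{eq:crew}, $u$ corresponds to an endomorphism of the associated lisse $K'$-sheaf on the connected variety $X'_{k(x_1)}$, whose characteristic polynomial is therefore the same at every closed point $x'\in|X'|$. Finally, for a closed point $x$ of $X$ lying over $(x_1,x')$ with $a:=[k(x):k]$, one has $\varphi_1^{a}|_x=(u|_{x'})^{a/a_1}$ on the fibre $\mathscr{E}_x$, so the first partial Frobenius slopes of $\mathscr{E}$ at $x$ are those of the Newton polygon of $u$ divided by $a_1$; in particular they depend only on $x_1$.

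Now run the identical argument with the roles of $X_1$ and $X'$ interchanged, applied to the Frobenius structure $\psi:=\varphi_2\circ F_2^{*}(\varphi_3)\circ\cdots$ for $X'$, which is trivial in the $X_1$-direction: the slopes of $\psi$ at $x$ depend only on $(x_2,\dots,x_n)$. Since $\varphi_1^{a}|_x$ and $\psi^{a}|_x$ are commuting automorphisms of $\mathscr{E}_x$ whose product is the unit-root operator $\varphi^{a}|_x$, a simultaneous triangularization over $\overline{K'}$ shows that the multiset of $a$-normalized valuations of the eigenvalues of $\varphi_1^{a}|_x$ is the negative of that of $\psi^{a}|_x$, hence also depends only on $(x_2,\dots,x_n)$. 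Combined with the previous paragraph, the first partial Frobenius slopes of $\mathscr{E}$ are constant on $|X|$, and since $i$ was arbitrary the corollary follows. The step I expect to require the most care is the bookkeeping with the cocycle identities that makes $u$ (and its analogue for $\psi$) an endomorphism in the category of unit-root $F$-isocrystals, so that Crew's equivalence applies; keeping track of the residue- and coefficient-field extensions is otherwise routine.
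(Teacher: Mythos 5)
Your proof is correct (up to the cocycle bookkeeping you flag, which does go through) and takes a genuinely different route from the paper's. The paper stays global: via Proposition~\ref{P:Crew diagonal unitroot} it converts $(\mathscr{E},\varphi_i)$ into a lisse $K$-sheaf $\mathbb{L}$ on $X$ together with partial Frobenii $\phi_i\colon F_i^*\mathbb{L}\to\mathbb{L}$, and then asserts constancy of the slopes of $\phi_{i,\overline{x}}$ on the grounds that $\mathbb{L}$ admits a lisse $\mathscr{O}_K$-model; the scalar-extension step at the end handles closed points of higher degree. You instead argue slice by slice, never invoking Proposition~\ref{P:Crew diagonal unitroot}: on $\{x_1\}\times X'\cong X'_{k(x_1)}$ the iterate $\varphi_1^{(a_1)}$ becomes an honest horizontal automorphism $u$ commuting with the (unit-root) restricted Frobenius, so by Crew's equivalence \eqref{eq:crew} on that connected slice it is a $\pi_1^{\et}$-equivariant endomorphism of a fixed $K'$-vector space, whose characteristic polynomial — not merely its Newton polygon — is independent of the fiber. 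That gives dependence only on $x_1$; pairing with the complementary composite $\psi$ via simultaneous triangularization and the unit-root relation gives dependence only on $(x_2,\dots,x_n)$, hence constancy. The trade-off: your version is longer and requires careful iterate/residue-field bookkeeping, but it is more self-contained and more transparent than the paper's compressed appeal to the integral model (which in the paper is left essentially as an assertion), and it yields the sharper statement that the full characteristic polynomial of $u$ is constant along each slice.
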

\begin{proof}
	Let $(\mathbb{L},\{\phi_i\}_{i=1}^n)$ be the associated data in Proposition~\ref{P:Crew diagonal unitroot}. 
	Then the $i$-th partial Frobenius slope of $\mathscr{E}$ at $x\in X(k)$ can be calculated by that of $\phi_{i,\overline{x}}$ on $\mathbb{L}_{\overline{x}}$ (with the convention of Remark~\ref{rem:Frob on local system}). 
	Since $\mathbb{L}$ admits a lisse $\mathscr{O}_K$-sheaf as an integral model, the slopes of $\phi_i$ at each fibers $\mathbb{L}_{\overline{x}}$ are constant as function on $x\in X(k)$.

	The corollary follows from applying the previous argument to extensions of scalars of $(\mathscr{E},\varphi_i)$ (\S~\ref{sss:crew equiv}).  
\end{proof}

\begin{theorem}
	\label{t:slope filtration}
	Let $(\mathscr{E},\varphi_i)$ be a convergent $\Phi$-isocrystal over $X$. 
	Suppose that the diagonal Newton polygon of $(\mathscr{E},\varphi_i)$ is constant as a function on $|X|$. 
	
	\textnormal{(i)} 
	There exists a filtration
	\begin{equation} \label{eq:d slope filtration}
0 = \mathscr{E}_0 \subset \cdots \subset \mathscr{E}_l = \mathscr{E}
\end{equation}
	of $\PhiIsoc(X)$ and an increasing sequence $\mu_1<\mu_2<\cdots<\mu_{\ell}$ of rational numbers such that for $j=1,\dots,\ell$, the diagonal Newton polygon of $\mathscr{E}_j/\mathscr{E}_{j-1}$ is constant with slope $\mu_j$. 
	Moreover, the filtration and sequence are both uniquely determined by this condition. (We call it the \emph{diagonal slope filtration} of $\mathscr{E}$.)
	
	\textnormal{(ii)} 
	For each partial Frobenius structure $\varphi_i$ of $\mathscr{E}$, its Newton polygon is also constant on $|X|$. 
\end{theorem}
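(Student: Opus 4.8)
For (i), the plan is to deduce the filtration from the classical slope filtration theorem and then check it is preserved by the partial Frobenii. First I would apply the slope filtration theorem for convergent $F$-isocrystals with constant Newton polygon over a smooth connected base (going back to Katz; the convergent $F$-isocrystal version over such a base is standard) to the diagonal $F$-isocrystal $(\mathscr{E},\varphi)$, where $\varphi$ is the composite of the $\varphi_i$. This produces a unique filtration $0=\mathscr{E}_0\subset\cdots\subset\mathscr{E}_\ell=\mathscr{E}$ in $\FIsoc(X)$ and rationals $\mu_1<\cdots<\mu_\ell$ with each $\mathscr{E}_j/\mathscr{E}_{j-1}$ isoclinic of slope $\mu_j$ and of constant Newton polygon. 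It then remains to upgrade this to a filtration in $\PhiIsoc(X)$, i.e.\ to show $\varphi_i(F_i^{*}\mathscr{E}_j)=\mathscr{E}_j$ for all $i,j$. The key observation is that the exact tensor functor $F_i^{*}\colon\FIsoc(X)\to\FIsoc(X)$ carries an object isoclinic of slope $\mu$ to another such object: the morphism $F_i$ permutes the closed points of $X$ within the fibres of the surjection $|X|\twoheadrightarrow\prod_i|X_i|$ of Lemma~\ref{l:partial Frob}, acting on such a fibre through the Galois action of \eqref{eq:decomposition points}, and pullback of an isocrystal over a finite field along a Galois automorphism preserves its Newton polygon; since an isoclinic object has the same Newton polygon at every closed point of the connected variety $X$, this gives the claim. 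Consequently $F_i^{*}\mathscr{E}_\bullet$ is a filtration of $F_i^{*}(\mathscr{E},\varphi)$ with graded pieces isoclinic of slopes $\mu_1<\cdots<\mu_\ell$, hence is its diagonal slope filtration by uniqueness; as $\varphi_i$ is an isomorphism $F_i^{*}(\mathscr{E},\varphi)\xrightarrow{\sim}(\mathscr{E},\varphi)$ in $\FIsoc(X)$, functoriality of slope filtrations forces $\varphi_i(F_i^{*}\mathscr{E}_j)=\mathscr{E}_j$. Uniqueness of the diagonal slope filtration inside $\PhiIsoc(X)$ is then immediate from its uniqueness inside $\FIsoc(X)$.

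For (ii), I would first reduce to the diagonally isoclinic case using the filtration from (i). At a closed point $x$ of $X$, after the extension of scalars \eqref{eq:ext sca} to $k_x$ and passage to the fibre, $\varphi_i^{[k_x:k]}|_x$ is an automorphism of $\mathscr{E}_x$ preserving the induced filtration $\mathscr{E}_{\bullet,x}$; since Newton polygons are additive in short exact sequences of isocrystals over a finite field, the $i$-th partial Newton polygon of $\mathscr{E}$ at $x$ is the slope-sorted concatenation of those of the $\mathscr{E}_j/\mathscr{E}_{j-1}$ at $x$. As each $\mathscr{E}_j/\mathscr{E}_{j-1}$ lies in $\PhiIsoc(X)$ by part (i) and is diagonally isoclinic with constant Newton polygon, it suffices to treat a diagonally isoclinic $\mathscr{E}$, say of slope $\mu$.

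In the diagonally isoclinic case I would clear the denominator of $\mu$ by extending scalars along $k\to\FF_{q^{d}}$ (which by \eqref{eq:ext sca} replaces each $\varphi_i$ by $\varphi_i^{d}$) and then twist the first partial Frobenius by a suitable power of $p$ so that the diagonal Frobenius becomes unit-root. The result is a diagonally unit-root object of $\PhiIsoc(X_{\FF_{q^{d}}})$, so by Corollary~\ref{c:constant partial Frob} all of its partial Newton polygons are constant. Since extension of scalars does not change partial Newton polygons (with the normalization of \S\ref{sss:partial Frob slope}), the twist shifts the first partial Newton polygon by a constant and fixes the others, and every closed point of $X$ lifts to a closed point of $X_{\FF_{q^{d}}}$, it follows that each $i$-th partial Newton polygon of $\mathscr{E}$ is constant on $|X|$; combined with the previous paragraph this proves (ii).

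The two delicate inputs are the precise reference for the slope filtration theorem in the convergent $F$-isocrystal setting and, more importantly, the assertion that $F_i^{*}$ preserves isoclinicity, which rests on understanding the (not purely topological) action of the partial Frobenius $F_i$ on closed points of the non-product space $|X|$ and the fact that this action is by Galois automorphisms along the fibres of $|X|\twoheadrightarrow\prod_i|X_i|$. Once these are in place the remainder is formal: uniqueness and functoriality of slope filtrations for part (i), and additivity of Newton polygons plus Corollary~\ref{c:constant partial Frob} for part (ii).
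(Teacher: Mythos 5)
Your argument is correct, and for part (i) it takes a genuinely different route from the paper's. The paper proves that each $\varphi_i$ preserves the slope filtration (inherited from the convergent $F$-isocrystal case) by checking \emph{fiberwise at closed points}: at $x$ of degree $a$, the linear automorphism $\varphi_x^a$ has a generalized eigenspace decomposition that refines $\mathscr{E}_{\bullet,x}$, and since each $\varphi_{i,x}$ commutes with $\varphi_x^a$, it preserves each generalized eigenspace and hence the filtration. You instead make the \emph{global} observation that $\varphi_i$ is itself a morphism $F_i^*(\mathscr{E},\varphi)\to(\mathscr{E},\varphi)$ in $\FIsoc(X)$ (this follows from the compatibility relation $\varphi_i\circ F_i^*(\varphi_j)=\varphi_j\circ F_j^*(\varphi_i)$, which gives exactly $\varphi_i\circ F_i^*(\varphi)=\varphi\circ F_X^*(\varphi_i)$), that $F_i^*$ preserves isoclinicity (since $F_i$ acts on $|X|$ and conjugates fibers by Galois automorphisms, both of which preserve Newton polygons), and then you apply uniqueness and functoriality of the classical slope filtration. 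Both are clean; yours isolates the structural input ($\varphi_i$ is an $F$-isocrystal map) rather than re-proving it at the fiber level, and avoids any appeal to eigenspace decompositions. In fact the isoclinicity of $F_i^*\mathscr{F}$ follows even more directly than you argue: $F_i$ is a bijection on $|X|$, and the slope of $F_i^*\mathscr{F}$ at $x$ equals that of $\mathscr{F}$ at $F_i(x)$, so constancy is preserved with no need to analyze the fibers of $|X|\twoheadrightarrow\prod|X_i|$.

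For part (ii) your argument and the paper's coincide in substance — reduce via (i) and additivity of Newton polygons to a diagonally isoclinic graded piece, then twist to the unit-root case and cite Corollary~\ref{c:constant partial Frob}. You are more careful than the paper in spelling out the reduction: the paper compresses "after twisting" into a phrase, whereas you correctly flag that for a non-integral diagonal slope one must first clear denominators by a finite extension of scalars (invoking \eqref{eq:ext sca} and checking the normalization of \S~\ref{sss:partial Frob slope}) before the twist can be realized by a rank-one $\Phi$-isocrystal on $\Spec(k)$. This extra care is warranted, and your reduction is the rigorous form of the paper's terse statement.
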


\begin{proof}
	(i) There exists a slope filtration \eqref{eq:d slope filtration} as convergent $F$-isocrystals \cite[Corollary~4.2]{kedlaya-isocrystals}. 
	It suffices to show that each partial Frobenius $\varphi_i$ preserves this filtration, that is the composition $F_i^*(\mathscr{E}_j)\xrightarrow{\varphi_i} \mathscr{E} \to \mathscr{E}/\mathscr{E}_j$ vanishes. 
	Then uniqueness follows from that of the slope filtration for $F$-isocrystals. 
	
	We reduce to checking the above claim at the fiber of each closed point $x\in |X|$. 
	We may assume there exists a finite extension $k'/k$ of degree $a$ such that $x\simeq \Spec(k')$ as in \S~\ref{sss:partial Frob slope}. 
	Then the fiber of the Frobenius structure $\varphi_x^a$ is a linear automorphism of $\mathscr{E}_x$, and its generalized eigenspace decomposition is a refinement of the filtration $\mathscr{E}_{0,x}\subset\cdots \subset \mathscr{E}_{\ell,x}$. 
	Since each partial Frobenius structure $\varphi_{i,x}$ commutes with $\varphi_{x}^a$, $\varphi_{i,x}$ preserves each generalized eigenspace of $\varphi_x^a$. 
	Then the assertion follows. 

	(ii) By assertion (i), we may reduce to the case where $\mathscr{E}$ is diagonally unit-root after twisting. 
	In this case, assertion (ii) follows from Corollary \ref{c:constant partial Frob}. 
\end{proof}

\subsection{A variant of Crew's theorem for unit-root convergent $\Phi$-isocrystals}
\begin{secnumber} \label{sss:CXPhi}
	We denote by $\mathcal{C}(X,\Phi)$ the category of objects $(T,\{F_{ \{i\}}\}_{i=1}^n)$ consisting of a finite \'etale morphism $T\to X=X_1\times_k\cdots \times_k X_n$ and isomorphisms $F_{ \{i\}}\colon T\times_{X,F_i}X \xrightarrow{\sim} T$ commuting with each other, i.e. $F_{ \{i\}}\circ F_{ \{j\}}\times_{X,F_j}X= F_{ \{j\}}\circ F_{ \{i\}}\times_{X,F_i}X$, whose composition is the relative Frobenius morphism $F_{T/X}$ of $T$ over $X$.
	A morphism in this category is a morphism above $X$ compatible with the $F_{ \{i\}}$. 
	Note that an object of $\mathcal{C}(X,\Phi)$ is equivalent to the data of a pair $(T, \{\varphi_{T,i}\}_{i=1}^n)$ consisting of a finite \'etale morphism $T\to X$ together with morphisms $\varphi_{T,i}\colon T\to T$ above $F_i$, commuting with each other, whose composition is the Frobenius morphism $F_T$. 

	This category is a Galois category and we denote by $\pi_1^{\et}(X,\Phi,\overline{x})$ the Galois group defined by the fiber functor associated to $\overline{x}$.
	We have the following equivalence:

	\begin{itemize}
		\item the category of continuous actions of $\pi_1(X,\Phi,\overline{x})$ on finite sets;

		\item the category of locally constant constructible sheaves $\mathbb{L}$ of $X_{\et}$, equipped with a partial Frobenius structure $\phi_i\colon F_i^*(\mathbb{L})\xrightarrow{\sim} \mathbb{L}$, commuting with each other, whose composition is the Frobenius correspondence $\phi_{\mathbb{L}}$.  
	\end{itemize}
\end{secnumber}

\begin{prop} \label{P:Crew partial Frob}
	There is a canonical equivalence between the category of continuous representations of $\pi_1^{\et}(X,\Phi,\overline{x})$ on finite-dimensional $K$-vector spaces and the full subcategory $\PhiIsoc^{\ur}(X/K)$ of $\PhiIsoc(X/K)$ consisting of unit-root convergent $\Phi$-isocrystals. 
\end{prop}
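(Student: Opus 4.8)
The plan is to bootstrap from Crew's theorem \eqref{eq:crew} together with the diagonally-unit-root description already established in Proposition \ref{P:Crew diagonal unitroot}. First I would observe that a unit-root convergent $\Phi$-isocrystal $(\mathscr{E},\varphi_1,\ldots,\varphi_n)$ is in particular diagonally unit-root (since the composition $\varphi=\varphi_1\circ\cdots\circ\varphi_n$ has all slopes equal to a sum of zeros), so by Proposition \ref{P:Crew diagonal unitroot} it corresponds to a lisse $K$-sheaf $\mathbb{L}$ on $X$ equipped with commuting isomorphisms $\phi_i\colon F_i^*(\mathbb{L})\xrightarrow{\sim}\mathbb{L}$ whose composition is $\phi_{\mathbb{L}}$. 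Via the equivalence $\LS(X,K)\simeq \Rep_K^{\cont}(\pi_1^{\et}(X,\overline{x}))$ and the description of $\mathcal{C}(X,\Phi)$-sets recalled in \S\ref{sss:CXPhi}, such a datum $(\mathbb{L},\{\phi_i\})$ is exactly a continuous representation of $\pi_1^{\et}(X,\Phi,\overline{x})$ on a finite-dimensional $K$-vector space: the point is that the pair $(\mathbb{L},\phi_{\mathbb{L}})$ already encodes a $K$-representation of $\pi_1^{\et}(X,\overline{x})$ compatibly with geometric Frobenius, and the extra commuting data $\phi_i$ refines this to a representation of the larger fundamental group $\pi_1^{\et}(X,\Phi,\overline{x})$.

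The key steps, in order, are: (1) show that "unit-root $\Phi$-isocrystal" implies "diagonally unit-root", so that Proposition \ref{P:Crew diagonal unitroot} applies and gives an equivalence with the category of $(\mathbb{L},\{\phi_i\})$; (2) show conversely that if $(\mathscr{E},\varphi_i)$ is such that the associated $(\mathbb{L},\{\phi_i\})$ exists — equivalently $(\mathscr{E},\varphi)$ is unit-root — then each \emph{partial} Frobenius structure $\varphi_i$ is itself unit-root, i.e. the slopes of $\phi_{i,\overline{x}}$ on each geometric fibre vanish; (3) match the category of $(\mathbb{L},\{\phi_i\})$ with $\Rep_K^{\cont}(\pi_1^{\et}(X,\Phi,\overline{x}))$ using the dictionary of \S\ref{sss:CXPhi}, first for finite coefficient rings $\mathscr{O}_K/p^m$ and then passing to the limit and inverting $p$, exactly as in the standard proof of Crew's theorem. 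Step (2) is where one uses that $\mathbb{L}$ has a lisse $\mathscr{O}_K$-lattice, so $\phi_{i,\overline{x}}$ is (up to the chosen lattice) an automorphism of a free $\mathscr{O}_K$-module, forcing slope $0$; this is the same mechanism as in Corollary \ref{c:constant partial Frob}.

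I expect the main obstacle to be step (2): a priori "unit-root" for $\mathscr{E}$ is imposed as a condition on each $\varphi_i$ separately (Definition \ref{def:unit-root}(ii)), whereas what comes for free from Proposition \ref{P:Crew diagonal unitroot} is only that the \emph{composite} Frobenius is unit-root (that is the "diagonally unit-root" condition). So the content of the proposition is precisely that, for a convergent $\Phi$-isocrystal, diagonal unit-rootness already forces each partial Frobenius to be unit-root — and one must be careful that the functor of Proposition \ref{P:Crew diagonal unitroot}, applied to a \emph{unit-root} $\Phi$-isocrystal, lands inside the subcategory where each $\phi_i$ is moreover unit-root as an endomorphism, so that the resulting $\pi_1^{\et}(X,\Phi,\overline{x})$-representation is $K$-valued with bounded (indeed integral) image. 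Once the integral lattice is produced by unwinding Crew's construction (as in the lemma just before Remark \ref{rem:Frob on local system}), the slope-zero statement for $\phi_i$ is immediate, and the rest is formal bookkeeping through the Galois-category equivalence of \S\ref{sss:CXPhi}.
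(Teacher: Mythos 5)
Your step (2) is wrong, and it is the crux of the argument. You assert that ``diagonal unit-rootness already forces each partial Frobenius to be unit-root,'' but this fails already in rank $1$ over a point: take $n=2$, $X_1 = X_2 = \Spec(k)$, $\mathscr{E} = K$, $\varphi_1 = q$, $\varphi_2 = q^{-1}$. The composite $\varphi_1\circ\varphi_2 = 1$ is unit-root, so $\mathscr{E}$ is diagonally unit-root and gives an object $(\mathbb{L},\phi_1,\phi_2)$ as in Proposition~\ref{P:Crew diagonal unitroot}; but $\varphi_1$ has slope $1 \neq 0$, so $\mathscr{E}$ is \emph{not} unit-root, and indeed $q$ does not lie in $\mathscr{O}_K^\times$, so this datum does not come from a continuous $K$-representation of $\pi_1^{\et}(X,\Phi,\overline{x}) \cong \widehat{\ZZ}^2$. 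More generally, twisting by rank-$1$ $n$-twists $\chi$ with $\sum_i \mathrm{slope}(\chi_i) = 0$ always produces diagonally unit-root $\Phi$-isocrystals that are not unit-root. In other words, $\PhiIsoc^{\ur}(X/K)$ is a \emph{strict} subcategory of the diagonally unit-root $\Phi$-isocrystals, and Proposition~\ref{P:Crew diagonal unitroot} lands in the larger category, not the smaller one; the extra $\phi_i$ do not automatically preserve any $\mathscr{O}_K$-lattice, because the $\phi_i$ are additional structure and not part of the profinite group action encoded by $\phi_{\mathbb{L}}$.

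Your appeal to Corollary~\ref{c:constant partial Frob} is also off: the existence of a lisse $\mathscr{O}_K$-model for $\mathbb{L}$ only yields \emph{constancy} of the partial Frobenius slopes on $|X|$, not their vanishing, since the $\phi_i$ need not stabilize the chosen lattice. What the proposition actually requires --- and what the paper supplies --- is an argument that, assuming each $\varphi_i$ is unit-root (slope $0$ at each closed point), one can produce a \emph{single} $\mathscr{O}_K$-lattice $\widetilde{\mathbb{L}}\subset\mathbb{L}$ stable under all the $\phi_i$ simultaneously. This is done by Crew's iteration trick: starting from any lattice $\mathbb{L}^\circ$ stable under $\phi_{\mathbb{L}}$, the unit-root hypothesis on $\varphi_i$ gives two-sided bounds $p^r\mathbb{L}^\circ \subset \phi_i^m(F_i^{m,*}\mathbb{L}^\circ) \subset p^s\mathbb{L}^\circ$ uniform in $m$, which lets one form $\mathbb{L}'=\sum_m \IM\phi_i^m$ and $\mathbb{L}_i=\bigcap_m \IM\phi_i^m$ to get a $\phi_i$-stable lattice, and one iterates over $i$. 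This is exactly where the full ``unit-root'' hypothesis (Definition~\ref{def:unit-root}(ii)) enters, and it cannot be replaced by ``diagonally unit-root.'' You should also note that the paper sets up the functor in the other direction first, constructing a unit-root $\Phi$-lattice $M=\varprojlim M_n$ from a representation of $\pi_1^{\et}(X,\Phi,\overline{x})$ on a formal lift $\XX$ (following Crew) and verifying horizontality of the $\varphi_i$ for the canonical convergent connection, before using the above lattice argument for essential surjectivity. Your plan of ``first for $\mathscr{O}_K/p^m$, then pass to the limit'' is in the right spirit for constructing the functor, but it cannot replace the lattice-stabilization step.
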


\begin{secnumber}
	Let $\pi$ be a uniformizer of $\mathscr{O}_K$. 
	We set $R:=\mathscr{O}_K$ and $R_n:=\mathscr{O}_K/\pi^n \mathscr{O}_K$ for $n\ge 1$. 
	We may reduce to the case where each $X_i$ is affine and admits a smooth formal lifting $\XX_i$ to $R$ and a lifting $F_{\XX_i}\colon \XX_i\to \XX_i$ of the Frobenius $F_{X_i}$. We set $\XX:=\prod_{\rW(k)} \XX_i$ and denote by $F_{i,\XX}\colon \XX\to \XX$ the product of $F_{\XX_i}$ and the identity maps on other components.  
	
	A \textit{unit-root $\Phi$-lattice} on $\XX/R$ is a locally free $\mathscr{O}_{\XX}$-module of finite rank together with isomorphisms $\varphi_i\colon F_{i,\XX}^*(M)\xrightarrow{\sim} M$ commuting with each other. 
	We first establish an equivalence following \cite{crew-f}:
	\begin{equation}
		\iota\colon \Rep_{R}^{\cont}(\pi_1(X,\Phi,\overline{x})) \xrightarrow{\sim} \{\textnormal{Unit-root $\Phi$-lattices on $\XX/R$}\}.
		\label{eq:equiv integral rep}
	\end{equation}

	Let $V$ be a continuous $R$-linear representation of $\pi_1^{\et}(X,\Phi,\overline{x})$. 
	For $n\ge 1$, let $G_n$ be the image of $\pi_1^{\et}(X,\Phi,\overline{x})$ in $\GL(V/\pi^nV)$, which corresponds to a Galois cover $Y^{(n)}\to X$ and $\pi_n\colon \YY_n^{(n)}\to \XX_n$ its lifting to $R_n$. 
	Moreover, each isomorphism $F_{ \{i\}}$ uniquely lifts to an isomorphism $F_{ \{i\}}\colon F_{i,\XX}^*(\YY_n^{(n)})\xrightarrow{\sim} \YY_n^{(n)}$, which commute with each other. As in \cite{crew-f}, we consider a locally free $\mathscr{O}_{\XX_n}$-module (resp. $\mathscr{O}_{\XX}$-module)
	\[
	M_n:= \pi_{n,*}(\mathscr{O}_{\YY_n^{(n)}})\otimes_{R_n[G_n]}V/\pi^n V, \quad M:=\varprojlim M_n. 
	\]
	Then the isomorphism $F_{ \{i\}}$ induces an isomorphism $F_{i,\XX}^*(M_n)\xrightarrow{\sim} M_n$ and gives rise to a unit-root partial Frobenius structure
	\[
\varphi_i\colon F_{i,\XX}^*(M)\xrightarrow{\sim} M.
	\]
	In this way, we obtain the functor $\iota$ as in \eqref{eq:equiv integral rep}.
	
	Conversely, given a unit-root $\Phi$-lattice $(\mathscr{E},\varphi_i)$ on $\XX/R$, the associated unit-root $F$-lattice $(\mathscr{E},\varphi)$ defines a lisse $R$-sheaf $\mathbb{L}$ on $X$ by \cite[Theorem 2.2]{crew-f}.
	The partial Frobenius structures $\varphi_i$ define isomorphisms $\phi_i\colon F_i^*(\mathbb{L})\xrightarrow{\sim} \mathbb{L}$ commuting with each other. Then we obtain a continuous $R$-representation of $\pi_1(X,\Phi,\overline{x})$ on $\mathbb{L}_{\overline{x}}$. 
	The above construction is functorial and defines a quasi-inverse of $\iota$. 
\end{secnumber}
\begin{lemma}
   The isomorphism $\varphi_i$ is horizontal with respect to the canonical convergent connection $\nabla$ on $M^{\rig}$ \cite[Proposition 2.3]{crew-f}.
\end{lemma}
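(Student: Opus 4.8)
The plan is to reduce the statement to the functoriality of Crew's correspondence \eqref{eq:crew LS} together with the uniqueness clause of \cite[Proposition~2.3]{crew-f}. Recall that the canonical convergent connection $\nabla$ on $M^{\rig}$ is characterized as follows: writing $F_{\XX}:=F_{1,\XX}\circ\cdots\circ F_{n,\XX}$ for the resulting Frobenius lift on $\XX$, it is the unique connection on $M^{\rig}$ for which the total Frobenius structure $\varphi\colon F_{\XX}^{*}(M^{\rig})\xrightarrow{\sim}M^{\rig}$ is horizontal; equivalently, after inverting $p$, $(M^{\rig},\nabla)$ is the unit-root convergent isocrystal attached to the lisse sheaf $\mathbb{L}$ under \eqref{eq:crew LS}. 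I will exploit this in either of two essentially equivalent ways.

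The clean route: since $F_{i,\XX}$ is a lift of the morphism $F_{i}\colon X\to X$ of smooth connected $k$-varieties, the module-with-connection $(F_{i,\XX}^{*}(M^{\rig}),F_{i,\XX}^{*}(\nabla))$ is a model for the convergent isocrystal $F_{i}^{*}(M^{\rig},\nabla)$ (here one uses only frame-independence of convergent isocrystals), which by functoriality of \eqref{eq:crew LS} is the one attached to $F_{i}^{*}(\mathbb{L})$. By its very construction $\varphi_{i}$ is the isomorphism of unit-root $F$-isocrystals $(F_{i}^{*}\mathscr{E},F_{i}^{*}\varphi)\to(\mathscr{E},\varphi)$ corresponding to $\phi_{i}\colon F_{i}^{*}(\mathbb{L})\xrightarrow{\sim}\mathbb{L}$ under Crew's equivalence; being a morphism of convergent isocrystals, it is horizontal, i.e. on $M^{\rig}$ it intertwines $F_{i,\XX}^{*}(\nabla)$ with $\nabla$, which is the assertion.

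Alternatively, and avoiding the pullback formalism for isocrystals along $F_{i}$, one argues by uniqueness: transport $F_{i,\XX}^{*}(\nabla)$ along $\varphi_{i}$ to a connection $\nabla'$ on $M^{\rig}$, and check that $\varphi$ is again horizontal for $\nabla'$; then \cite[Proposition~2.3]{crew-f} forces $\nabla'=\nabla$, which is again the claim. The horizontality of $\varphi$ for $\nabla'$ follows by a short diagram chase from the identity $\varphi\circ F_{\XX}^{*}(\varphi_{i})=\varphi_{i}\circ F_{i,\XX}^{*}(\varphi)$ — itself a formal consequence of the relations $\varphi_{j}\circ F_{j,\XX}^{*}(\varphi_{i})=\varphi_{i}\circ F_{i,\XX}^{*}(\varphi_{j})$ and the commutativity of the $F_{j,\XX}$ — together with the horizontality of $\varphi_{i}$, of $F_{\XX}^{*}(\varphi_{i})$ and of $F_{i,\XX}^{*}(\varphi)$ with respect to the relevant pulled-back connections.

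There is no serious obstacle; the only point to handle with care is the bookkeeping of the various Frobenius pullbacks and the connections they induce — in particular that $F_{i,\XX}^{*}$ carries the canonical connection to the canonical connection of the pulled-back Frobenius datum, and that Crew's equivalence is compatible with pullback along the (non-étale) partial Frobenius $F_{i}$. Both rest only on the frame-independence of convergent isocrystals and on the functoriality already recorded for \eqref{eq:crew LS}, so this part is routine.
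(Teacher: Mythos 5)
Your second (``alternative'') route is sound and genuinely different from the paper's. The paper works integrally: it passes to the formal completion $\Delta$ of the diagonal and constructs the stratification on $M_n$ via the unique isomorphism $p_1^*\YY_n^{(n)}\xrightarrow{\sim}p_2^*\YY_n^{(n)}$ over $\Delta_n$ (uniqueness coming from \'etaleness of $\YY_n^{(n)}\to\XX_n$, which lets one lift the identity along the nilpotent thickening); that same uniqueness simultaneously forces compatibility with the $G_n$-action and with the lifted partial Frobenii $F_{\{i\}}$, and horizontality of $\varphi_i$ is read off. You instead transport $F_{i,\XX}^*\nabla$ along $\varphi_i$ to a connection $\nabla'$ and appeal to the uniqueness clause of \cite[Proposition~2.3]{crew-f}. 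Your key identity $\varphi\circ F_{\XX}^{*}(\varphi_{i})=\varphi_{i}\circ F_{i,\XX}^{*}(\varphi)$ does follow formally from the relations among the $\varphi_j$ and the commutativity of the $F_{j,\XX}$, and the chase works: with $G:=F_{i,\XX}$, $F:=F_{\XX}$, the map $F^*(\varphi_i)$ is horizontal for $(F^*G^*\nabla,F^*\nabla')$, $G^*(\varphi)$ is horizontal for $(G^*F^*\nabla,G^*\nabla)$, $\varphi_i$ is horizontal for $(G^*\nabla,\nabla')$ tautologically, and since $FG=GF$ the composite $\varphi=\varphi_i\circ G^*(\varphi)\circ F^*(\varphi_i)^{-1}$ is horizontal for $(F^*\nabla',\nabla')$. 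Do spell out that $\nabla'$ lies in the class to which Crew's uniqueness applies; this is fine, since $\nabla'$ is the transport of a convergent connection along an isomorphism of $\mathscr{O}_{\XX}$-modules. In sum, your route trades the formal lifting lemma for the uniqueness of the canonical connection; both arguments are short, and neither is clearly preferable.

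Your first (``clean'') route, as written, is circular. The sentence ``by its very construction $\varphi_{i}$ is the isomorphism of unit-root $F$-isocrystals $\ldots$ corresponding to $\phi_{i}$ under Crew's equivalence'' presupposes precisely what the lemma asserts: at this point in the argument $\varphi_i$ is only an isomorphism of $\mathscr{O}_{\XX}$-modules produced by the integral construction, and declaring it a morphism of $F$-isocrystals \emph{is} declaring it horizontal. One could try to rescue this by applying the rational functoriality of \eqref{eq:crew LS} to $\phi_i$ to get a horizontal isomorphism $\varphi_i'$ and then proving $\varphi_i=\varphi_i'$ by unwinding both constructions, but that comparison is essentially the lemma again, so nothing is gained. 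Lean on the second route.
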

\begin{proof}
	Let $\Delta$ be the formal completion of the diagonal map $\XX\to \XX\times_{R}\XX$ and $p_1,p_2\colon \Delta\to \XX$ the natural projections. 
	For $n\ge 1$, we set $\Delta_n:=\Delta\otimes_{R}R_n$. 
	Let $p_i^*\YY_{n}^{(n)}$ be the fiber product $\YY_{n}^{(n)}\times_{\XX_n,p_i}\Delta_n$. 
	The map $\YY_n^{(n)}\to \XX_n \xrightarrow{diag} \Delta_n$ gives rise to formal thickenings $\YY_n^{(n)}\to p_i^*\YY_{n}^{(n)}$. 
	Recall \textit{loc.cit.} that, since $\YY_n^{(n)}\to \XX_n$ is \'etale, there exists a unique isomorphism $p_1^*\YY_n^{(n)}\xrightarrow{\sim} p_2^*\YY_n^{(n)}$ which fits into the following diagram:
	\[
	\xymatrix{
	p_1^* \YY_n^{(n)} \ar[d] \ar[rd]^{\sim} & \YY_n^{(n)} \ar[l] \ar[d] \\
	\Delta_n & p_2^*\YY_n^{(n)} \ar[l]
	}
	\]
	By uniqueness, this isomorphism is compatible with the $G_n$-action, the partial Frobenius maps on $\YY_n^{(n)}$, and the canonical connection on $M$. Then the lemma follows. 
\end{proof}
\begin{secnumber} \textit{Proof of Proposition \ref{P:Crew partial Frob}.}
	By the above lemma, we have a fully faithful functor 
	\[
	\{\textnormal{Unit-root $\Phi$-lattices on $\XX/R$}\}\otimes_{R}K \to
	\{\textnormal{Unit-root $\Phi$-isocrystals on $X/K$}\}
	\]
	It suffices to show that the functor in Proposition \ref{P:Crew partial Frob} is essentially surjective. 

	Let $(\mathscr{M},\varphi_i)$ be a unit-root $\Phi$-isocrystal on $X/K$ and $(\mathbb{L},\phi_i)$ the associated data in Proposition \ref{P:Crew diagonal unitroot}. 
	There exists a $R$-lattice $\mathbb{L}^{\circ}$ of $\mathbb{L}$ such that $\phi_{\mathbb{L}}$ induces an isomorphism $F_X^*(\mathbb{L}^{\circ})\xrightarrow{\sim} \mathbb{L}^{\circ}$. 
	As $\varphi_i$ is unit-root, we claim there exist integers $r,s$ such that for every integer $m\ge 0$, we have 
		\[
		p^r \mathbb{L}^{\circ} \subset \phi_i^m(F_i^{m,*}\mathbb{L}^{\circ}) \subset p^s \mathbb{L}^{\circ}.
		\]
	Indeed, let $x\in|X|$ be a closed point with degree $[k_x:k]=a$. 
	The linear action of $\phi_{i,x}^a$ on the fiber $\mathbb{L}^{\circ}_{\overline{x}}$ is unit-root and satisfies the above properties \cite[Proposition 1.11]{crew-f}. 
	Then the claim follows.

	Following \cite{crew-f}, we consider $\mathbb{L}':=\sum_{m\ge 0} \IM \phi_i^m\colon \mathbb{L}^{\circ}\to p^s \mathbb{L}$ and $\mathbb{L}_i:= \cap_{m\ge 1} \IM \phi_i^m\colon \mathbb{L}'\to \mathbb{L}'$. 
	Then $\phi_i$ induces an isomorphism $F_i^*\mathbb{L}_i\xrightarrow{\sim} \mathbb{L}_i$. 

	By repeating the above argument to each $\phi_i$, we obtain a $R$-lattice $\widetilde{\mathbb{L}}$ of $\mathbb{L}$ such that $\phi_i$ induces an isomorphism $F_i^*(\widetilde{\mathbb{L}})\xrightarrow{\sim} \widetilde{\mathbb{L}}$ for every $i$. 
	Then the data $(\widetilde{\mathbb{L}},\phi_i)$ descend to a representation of $\pi_1^{\et}(X,\Phi,\overline{x})$.
	In view of Proposition \ref{P:Crew diagonal unitroot}, this defines a quasi-inverse of the previous construction. 
	The proposition follows. \hfill \qed
\end{secnumber}

\subsection{Original Drinfeld's lemma} 
	Following an argument of Drinfeld--Kedlaya \cite[Appendix B]{DK17}, we recover the original Drinfeld's lemma from Theorem \ref{t:main thm} and Proposition \ref{P:Crew partial Frob}.

\begin{secnumber}
	We say a representation of $\Rep^{\cont}_K(\pi_1^{\et}(X,\Phi,\overline{x}))$ is \textit{smooth} if the action of $\pi_1^{\et}(X,\Phi,\overline{x})$ factors through a finite quotient. 
	In view of \'etale descent for overconvergent $F$-isocrystals, the canonical functor 
	\begin{equation}
		\Rep^{\smooth}_{K}(\pi_1^{\et}(X,\Phi,\overline{x})) \to \PhiIsoc(X)
		\label{Crew smooth}
	\end{equation}
	factors through $\PhiIsocd(X)$. 
	It induces canonical homomorphisms: 
	\begin{equation}
		\pi_1^{\PhiIsoc}(X)\to \pi_1^{\PhiIsocd}(X)\to \pi_1^{\et}(X,\Phi,\overline{x}).
		\label{composition pi}
	\end{equation}
	Since the functor \eqref{Crew smooth} is fully faithful, the composition is an epimorphism. 
	
	Writing $\Fet(X_i)$ for the category of finite \'etale schemes over $X_i$, the canonical functor
	\[
	\Fet(X_i)\to \mathcal{C}(X,\Phi), \qquad T_i\mapsto \prod_{j\neq i}X_j \times_k T_i
	\]
	induces a canonical homomorphism $\widetilde{p}_i\colon \pi_1^{\et}(X,\Phi,\overline{x})\to \pi_1^{\et}(X_i,\overline{x}_i)$, where $\overline{x}_i:=p_i(\overline{x})$. 
	In view of the construction, the above morphism is compatible with the projection, that is the following diagram commutes:
	\begin{equation}
		\label{commutative projection}
	\xymatrix{
	\pi_1^{\PhiIsocd}(X) \ar[r] \ar[d]_{p_i^{\circ}} & \pi_1^{\et}(X,\Phi,\overline{x}) \ar[d]_{\widetilde{p}_i} \\
	\pi_1^{\FIsocd}(X_i) \ar[r] & \pi_1^{\et}(X_i,\overline{x}_i)
	}
	\end{equation}
\end{secnumber}

\begin{theorem} \label{t:Drinfeld lemma origin}
	\textnormal{(i)} The homomorphisms \eqref{composition pi} induce isomorphisms
	\[
	\pi_0(\pi_1^{\PhiIsoc}(X))\xrightarrow{\sim} \pi_0(\pi_1^{\PhiIsocd}(X))\xrightarrow{\sim} \pi_1(X,\Phi,\overline{x}).		
	\]

	\textnormal{(ii)}
	If we apply $\pi_0$ to the isomorphism \eqref{Tannakian Drinfeld lemma}, we obtain the original Drinfeld's lemma:
	\[
	\pi_1(X,\Phi,\overline{x})\xrightarrow{\sim} \prod_{i=1}^n \pi_1^{\et}(X_i,\overline{x}_i).
	\]
\end{theorem}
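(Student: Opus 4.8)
The strategy is to derive both parts from Theorem~\ref{t:main thm}, Proposition~\ref{P:Crew partial Frob}, and the standard fact that for a neutral Tannakian category over a field of characteristic zero with affine group scheme $G$, the maximal pro-finite quotient $\pi_0(G)$ is the Tannakian group of the full Tannakian subcategory $\mathscr{C}^{\mathrm{fin}}$ of objects whose monodromy group is finite (equivalently, on which $G$ acts through a finite quotient). Thus part~(i) reduces to identifying $\PhiIsoc^{\mathrm{fin}}(X)$ and $\PhiIsocd^{\mathrm{fin}}(X)$ with the essential image of $\Rep^{\smooth}_{\bQp}(\pi_1^{\et}(X,\Phi,\overline x))$, compatibly with \eqref{Crew smooth}, while part~(ii) is obtained by applying $\pi_0(-)$ to \eqref{Tannakian Drinfeld lemma}.

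For part~(i) I would first observe that any object of $\PhiIsoc^{\mathrm{fin}}(X)$ (resp.\ $\PhiIsocd^{\mathrm{fin}}(X)$) is unit-root: if the monodromy group is finite, the automorphisms $\omega(\varphi_i)\circ\eta_{i,\mathscr{E}}^{-1}$ of $\omega(\mathscr{E})$ have finite order, so at every closed point the eigenvalues of the iterated partial Frobenius are roots of unity and all partial Frobenius slopes vanish (in particular the diagonal ones). Proposition~\ref{P:Crew partial Frob}, after $\otimes_K\bQp$, then identifies $\PhiIsoc^{\ur}(X)$ with $\Rep^{\cont}_{\bQp}(\pi_1^{\et}(X,\Phi,\overline x))$, and under this equivalence the objects of finite monodromy are exactly the representations with finite image, i.e.\ the smooth ones. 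Since $\Rep^{\smooth}_{\bQp}(\pi_1^{\et}(X,\Phi,\overline x))$ has Tannakian group $\pi_1^{\et}(X,\Phi,\overline x)=\pi_1(X,\Phi,\overline x)$, this gives $\pi_0(\pi_1^{\PhiIsoc}(X))\xrightarrow{\sim}\pi_1(X,\Phi,\overline x)$. For the overconvergent case: the functor \eqref{Crew smooth} is fully faithful and factors through $\PhiIsocd(X)$, so smooth representations are overconvergent; conversely, given $\mathscr{E}\in\PhiIsocd^{\mathrm{fin}}(X)$, its image $r(\mathscr{E})$ in $\PhiIsoc(X)$ is unit-root with monodromy group contained in that of $\mathscr{E}$, hence finite, hence $r(\mathscr{E})$ corresponds to a smooth representation; by full faithfulness of the restriction functor $r$ (the full faithfulness theorem for overconvergent $F$-isocrystals, applied to the Frobenius obtained by composing the partial ones) $\mathscr{E}$ itself lies in the essential image of \eqref{Crew smooth}. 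This yields $\pi_0(\pi_1^{\PhiIsocd}(X))\xrightarrow{\sim}\pi_1(X,\Phi,\overline x)$, and since these identifications are compatible with $r$ and with \eqref{Crew smooth}, the first map of \eqref{composition pi} becomes an isomorphism, proving~(i).

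For part~(ii), apply the functor $\pi_0(-)$, which preserves finite products and isomorphisms, to the second isomorphism of \eqref{Tannakian Drinfeld lemma}: this gives $\pi_0(\pi_1^{\PhiIsocd}(X))\xrightarrow{\sim}\prod_{i=1}^n\pi_0(\pi_1^{\FIsocd}(X_i))$. Part~(i) applied to $X$ identifies the source with $\pi_1(X,\Phi,\overline x)$, while part~(i) applied to each $X_i$ with a single Frobenius — where $\mathcal{C}(X_i,\Phi)$ degenerates to $\Fet(X_i)$, so that $\pi_1^{\et}(X_i,\Phi,\overline x_i)=\pi_1^{\et}(X_i,\overline x_i)$ — identifies each factor of the target with $\pi_1^{\et}(X_i,\overline x_i)$. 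The commutativity of \eqref{commutative projection} shows that $\pi_0(p_i^\circ)$ is carried to the map $\widetilde p_i$ induced by the projection $X\to X_i$, so the resulting isomorphism $\pi_1(X,\Phi,\overline x)\xrightarrow{\sim}\prod_{i=1}^n\pi_1^{\et}(X_i,\overline x_i)$ is exactly the one of the original Drinfeld lemma.

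The main obstacle is the backward inclusion in part~(i) — showing that a finite-monodromy (over)convergent $\Phi$-isocrystal arises from a \emph{smooth} (not merely continuous) representation of $\pi_1^{\et}(X,\Phi,\overline x)$; this is precisely where the unit-root reduction, Proposition~\ref{P:Crew partial Frob}, and (in the overconvergent case) full faithfulness of $\FIsocd(X)\to\FIsoc(X)$ enter, everything else being formal bookkeeping with $\pi_0$ and products.
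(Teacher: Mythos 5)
Your proposal follows essentially the same route as the paper: reduce $\pi_0$ to the full Tannakian subcategory of finite-monodromy objects, show these are exactly the images of smooth representations of $\pi_1^{\et}(X,\Phi,\overline x)$ via Proposition~\ref{P:Crew partial Frob}, handle the overconvergent case by full faithfulness of $\PhiIsocd(X)\to\PhiIsoc(X)$ (Proposition~\ref{P:fully faithful}), and deduce (ii) formally from (i) together with \eqref{Tannakian Drinfeld lemma} and \eqref{commutative projection}. Your argument for the overconvergent case is in fact slightly more self-contained than the paper's, which cites \cite[Lemma~B.7.4]{DK17} at that point.

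The one place where your argument has a real gap is the unit-root reduction. You assert: ``if the monodromy group is finite, the automorphisms $\omega(\varphi_i)\circ\eta_{i,\mathscr{E}}^{-1}$ of $\omega(\mathscr{E})$ have finite order, so at every closed point the eigenvalues of the iterated partial Frobenius are roots of unity.'' But the fiber functor $\omega$ is based at a single (rational) point, so finiteness of the operators on $\omega(\mathscr{E})$ only yields vanishing of the slopes at that one point. Proposition~\ref{P:Crew partial Frob} requires the $\Phi$-isocrystal to be unit-root at \emph{every} closed point, and passing from one point to all points requires either a Newton-polygon constancy argument (e.g.\ first showing $\mathscr{E}$ is diagonally unit-root everywhere, then invoking Corollary~\ref{c:constant partial Frob}) or a base-change/specialization argument. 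This is exactly the step the paper outsources to the analogue of \cite[Proposition~B.4.1]{DK17}; your heuristic does not supply a substitute for it. Once that step is filled in (and it is true, just not immediate), the rest of your argument goes through.
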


\begin{proof}
	(i) 
	We first show that for any finite group $\Gamma$, any tensor functor
	\begin{equation} \label{eq:Drinfeld lemma pi1}
	\Rep_{\bQp}(\Gamma) \to \PhiIsoc(X)
\end{equation}
	factors through $\Rep_{\bQp}^{\cont}(\pi_1(X,\Phi,\overline{x}))$. 
	By Proposition \ref{P:Crew partial Frob}, it suffices to show that objects in the essential image of the above functor are unit-root at each point of $|X|$. 
	This can be shown by a similar argument to \cite[Proposition~B.4.1]{DK17}. 
	
	Moreover, we deduce that the above functor factors through $\Rep_{\bQp}^{\smooth}(\pi_1(X,\Phi,\overline{x}))$ as in \textit{loc.cit.}

	Hence, the kernel of the canonical epimorphism $\pi_1^{\PhiIsoc}(X)\twoheadrightarrow \pi_1(X,\Phi,\overline{x})$ is the neutral component of $\pi_1^{\PhiIsoc}(X)$. 
	We obtain the isomorphism $\pi_0(\pi_1^{\PhiIsoc}(X))\xrightarrow{\sim} \pi_1(X,\Phi,\overline{x})$.

	By a full faithfulness result à la Kedlaya (Proposition \ref{P:fully faithful}), $\pi_1^{\PhiIsocd}(X)\to \pi_1(X,\Phi,\overline{x})$ is an epimorphism. 
	Then by \cite[Lemma~B.7.4]{DK17}, we deduce the isomorphism $\pi_0(\pi_1^{\PhiIsocd}(X))\xrightarrow{\sim} \pi_1(X,\Phi,\overline{x})$. 

	(ii) 
	Assertion (ii) follows from assertion (i), Theorem \ref{t:main thm}, and the commutative diagram \eqref{commutative projection}. 	
\end{proof}
	
	\begin{prop}\label{P:fully faithful}
		\textnormal{(i)} The canonical functor $\iota_{\Phi}\colon \PhiIsocd(X/K)\to \PhiIsoc(X/K)$ is fully faithful.

		\textnormal{(ii)} 
			For $i=1,\dots,n$, let $U_i\subset X_i$ be an open immersion with dense image and set $U:= \prod_{i=1}^n U_i$. 
	Then the following restriction functors are fully faithful:
	\[
	\PhiIsoc(X/K)\to \PhiIsoc(U/K),\quad \PhiIsocd(X/K)\to \PhiIsocd(U/K).
	\]
	\end{prop}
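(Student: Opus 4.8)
The plan is to reduce all three full-faithfulness assertions — part (i) and the two restriction functors in part (ii) — to Kedlaya's full faithfulness theorem for $F$-isocrystals, using in addition only the elementary \emph{faithfulness} of the corresponding forgetful functors between categories of plain isocrystals. The key observation is that a morphism in any of the $\Phi$-categories is nothing but a morphism of the underlying (over)convergent isocrystals which commutes with all $n$ partial Frobenii $\varphi_i$; composing these relations, such a morphism automatically commutes with the \emph{total} Frobenius $\varphi=\varphi_1\circ F_1^{*}(\varphi_2)\circ\cdots$, so it is in particular a morphism of $F$-isocrystals. Thus the $\Phi$-structures only intervene through the condition ``commutes with the $\varphi_i$'', which one controls by faithfulness.

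Concretely, write $G$ for whichever of the three functors is under consideration, $G_0$ for the associated functor on underlying plain isocrystals ($\Isocd(X)\to\Isoc(X)$ in case (i); $\Isocd(X)\to\Isocd(U)$ resp. $\Isoc(X)\to\Isoc(U)$ in case (ii)), and $G_F$ for the functor on $F$-isocrystals obtained by remembering only the total Frobenius ($\FIsocd(X)\to\FIsoc(X)$; resp. $\FIsocd(X)\to\FIsocd(U)$, $\FIsoc(X)\to\FIsoc(U)$). First I would record that $G_0$ is faithful: a morphism of (over)convergent isocrystals on $X$ whose image under $G_0$ vanishes — that is, which vanishes on $]U[$, or whose realization on a strict neighborhood of $]X[$ vanishes on $]X[$ — must vanish, by density of the tube $]U[$ in $]X[$ and of $]X[$ in each of its strict neighborhoods; this is standard. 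Hence $G$ is faithful as well, since the $\Phi$-Hom-sets are cut out inside the isocrystal Hom-sets by the relations with the $\varphi_i$ and $G$ restricts $G_0$. Second I would invoke that $G_F$ is fully faithful: for the two overconvergent functors this is exactly Kedlaya's full faithfulness theorem (applied with empty boundary in case (i), and with boundary $X\setminus U$ in case (ii), where one also composes with the fully faithful $\FIsocd(U)\to\FIsoc(U)$ to get $\FIsocd(X)\to\FIsocd(U)$); for $\FIsoc(X)\to\FIsoc(U)$ it is the convergent analogue, which one obtains either from the unit-root case (Proposition~\ref{P:Crew partial Frob}, via the surjection $\pi_1^{\et}(U)\twoheadrightarrow\pi_1^{\et}(X)$) together with a dévissage along slope filtrations, or directly by the Dwork-descent argument showing that a Frobenius-invariant horizontal section of a convergent $F$-isocrystal extends across a closed subset of positive codimension.

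Granting these two inputs, the argument is formal. Faithfulness of $G$ was noted above. For fullness, take a morphism $f$ between the images under $G$ of $(\mathscr E,\varphi_\bullet)$ and $(\mathscr F,\psi_\bullet)$. As explained, $f$ commutes with the total Frobenius, hence lies in the target category of $G_F$; by full faithfulness of $G_F$ it is the image of a unique morphism $\widetilde f$ of $F$-isocrystals over $X$ compatible with the total Frobenius. It remains to check that $\widetilde f$ is $\Phi$-equivariant, i.e.\ $\psi_i\circ F_i^{*}\widetilde f=\widetilde f\circ\varphi_i$ for each $i$. The difference $\psi_i\circ F_i^{*}\widetilde f-\widetilde f\circ\varphi_i$ is a morphism $F_i^{*}\mathscr E\to\mathscr F$ of (over)convergent isocrystals on $X$ whose image under $G_0$ equals $\psi_i\circ F_i^{*}f-f\circ\varphi_i=0$; by faithfulness of $G_0$ it vanishes. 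Therefore $\widetilde f$ lies in the source $\Phi$-category and maps to $f$, proving fullness. This disposes of (i) and of both restriction functors in (ii) simultaneously.

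The hard part is entirely contained in the second input above. The overconvergent statements are Kedlaya's full faithfulness theorem and are by no means formal; and the convergent restriction statement $\FIsoc(X)\to\FIsoc(U)$ is the most delicate point, since the analogous assertion \emph{without} a Frobenius structure genuinely fails — one must use that Frobenius invariance forces a horizontal section to be ``maximally convergent'' and hence to extend. Everything else — the reduction to the total Frobenius, the faithfulness of the forgetful functors, and the bookkeeping with the $\varphi_i$ — is routine; in particular, the passage from $n=1$ (where (i) is literally Kedlaya's theorem) to arbitrary $n$ introduces nothing new.
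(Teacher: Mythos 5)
Your proof is correct and takes essentially the same route as the paper: reduce the fullness assertion to the full faithfulness of the corresponding functor on $F$-isocrystals with total Frobenius (Kedlaya's theorem in the overconvergent case, its convergent restriction analogue in the other), then use faithfulness of the associated functor on plain isocrystals to verify that the resulting lift commutes with each partial Frobenius $\varphi_i$. The only cosmetic difference is that the paper derives the overconvergent restriction statement in (ii) from (i) plus the convergent case, whereas you treat all three functors uniformly with the $G/G_0/G_F$ bookkeeping; the inputs and the $\Phi$-equivariance check are identical.
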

\begin{proof}
	(i) Consider the following diagram:
	\[
	\xymatrix{
	\PhiIsocd(X)\ar[r]^{\iota_{\Phi}} \ar[d]& \PhiIsoc(X) \ar[d] \\
	\FIsocd(X)\ar[r]^{\iota_F} & \FIsoc(X) }
	\]
	The vertical arrows are faithful and $\iota_F$ is fully faithful \cite{Ked04}. 
	Hence it remains to show the fullness of $\iota_{\Phi}$. 
	
	Given two objects $(\mathscr{E},\varphi_i),(\mathscr{E}',\varphi'_i)$ of $\PhiIsocd(X)$ and a morphism $f\colon \mathscr{E}\to \mathscr{E}'$ of $\PhiIsoc(X)$,
	 $f$ extends to a morphism $f^{\dagger}\colon \mathscr{E}\to \mathscr{E}'$ of overconvergent $F$-isocrystals by the full faithfulness of $\iota_F$. 
	Since the canonical functor $\Isocd(X)\to \Isoc(X)$ is faithful, we deduce that $f^{\dagger}$ is compatible with partial Frobenius structures.  

	(ii) By assertion (i), it suffices to prove the assertion for convergent $\Phi$-isocrystals. 
	By \cite[Theorem 5.3]{kedlaya-isocrystals} and a similar argument as in (i), it suffices to show its fullness. 
	
	Given two objects $(\mathscr{E},\varphi_i),(\mathscr{E}',\varphi'_i)$ of $\PhiIsoc(X/K)$ and a morphism $f\colon \mathscr{E}\to \mathscr{E}'$ of $\PhiIsoc(U/K)$. 
	Then $f$ extends to a morphism $g\colon \mathscr{E}\to \mathscr{E}'$ of convergent $F$-isocrystals on $X$. 
	The compatibility between $f$ and partial Frobenius structures follows from \textit{loc.cit.} 
\end{proof}

\subsection{Partial Frobenius slope filtrations}

\begin{theorem} \label{T:partial slope filtration}
Suppose that an object $\mathscr{E}$ of $\PhiIsoc(X/K)$ has a constant diagonal Newton polygon on $|X|$. Then for $i=1,\dots,n$, $\mathscr{E}$ admits a filtration
\[
0 = \mathscr{E}_0^{(i)} \subset \cdots \subset \mathscr{E}_l^{(i)} = \mathscr{E}
\]
in $\PhiIsoc(X/K)$ and an ascending sequence $\mu_1 < \cdots < \mu_l$ of rational numbers such that for $j=1,\dots,l$, the $i$-th partial Frobenius slope of $\mathscr{E}^{(i)}_j/\mathscr{E}^{(i)}_{j-1}$ equals to $\mu_j$. Moreover, the filtration and sequence are both uniquely determined by this condition. (We call it the \emph{$i$-th partial slope filtration} of $\mathscr{E}$.)
\end{theorem}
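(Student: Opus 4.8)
The plan is to reduce the existence of the $i$-th partial slope filtration to Kedlaya's slope filtration theorem for convergent $F$-isocrystals, applied on fibres along which $F_i$ becomes an honest Frobenius, and then to propagate the filtration over all of $X$.

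First I would record that, by Theorem~\ref{t:slope filtration}(ii), the hypothesis that the diagonal Newton polygon of $\mathscr{E}$ is constant on $|X|$ already forces the $i$-th partial Frobenius Newton polygon to be constant on $|X|$; let $\mu_1 < \cdots < \mu_l$ be its slopes with multiplicities $r_1,\dots,r_l$. The filtration we seek is determined fibrewise: at a closed point $x$, after the scalar extension of \S\ref{sss:partial Frob slope}, $\varphi_i^a|_x$ is a $K_x$-linear automorphism of $\mathscr{E}_x$, and $\mathscr{E}^{(i)}_{j}$ must restrict at $x$ to the sum of the generalized eigenspaces of $\varphi_i^a|_x$ on which the eigenvalue has valuation $\le \mu_j$. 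This pointwise description will yield uniqueness of both the filtration and the sequence $(\mu_j)$; the work is to produce the $\mathscr{E}^{(i)}_j$ as sub-objects of $\PhiIsoc(X/K)$.

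For the base case, suppose $X' := \prod_{j\neq i} X_j$ has dimension $0$, i.e.\ is a finite disjoint union of spectra of finite fields; on a component $X_i\times_k\Spec k'$ with $[k':k]=a$, the $a$-th iterate of the restriction of $F_i$ is precisely the absolute $sa$-th Frobenius of $X_{i,k'}$, so $(\mathscr{E},\varphi_i^a)$ restricts to a genuine convergent $F$-isocrystal on $X_{i,k'}$ with constant Newton polygon, and Kedlaya's slope filtration theorem \cite[Corollary~4.2]{kedlaya-isocrystals} supplies a unique filtration with isoclinic graded pieces of the expected slopes and ranks, preserved by $\varphi_i|_{X_{i,k'}}$ since the latter commutes with $\varphi_i^a|_{X_{i,k'}}$. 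In general I would argue by induction on $\dim X$: when $\dim X' \ge 1$, one builds the filtration over $X$ from its restrictions by combining Shiho's cut-by-curves theorem \cite{Shiho11} and Kedlaya--Shiho purity with the inductive hypothesis — for a suitable curve $c\colon C\to X$, one exhibits the fibrewise-defined subspaces as actual sub-isocrystals over $c(C)$, after cutting down to a situation in which $F_i$ restricts to a partial Frobenius of a lower-dimensional total space — and then glues, using full faithfulness of restriction to dense opens (Proposition~\ref{P:fully faithful}(ii)). Constancy of the $i$-th partial Newton polygon on $|X|$ ensures the graded pieces have the correct constant rank, hence are $\varphi_i$-isoclinic of slope $\mu_j$. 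Alternatively, one may first twist $\mathscr{E}$ and apply Theorem~\ref{t:slope filtration} to reduce to the diagonally unit-root case, where Proposition~\ref{P:Crew diagonal unitroot} converts the problem into a statement about a lisse $K$-sheaf equipped with commuting partial Frobenii, and run the same fibrewise reduction there.

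It remains to check that each $\mathscr{E}^{(i)}_j$ is stable under the other partial Frobenii $\varphi_l$ ($l\neq i$), which I would do exactly as in the proof of Theorem~\ref{t:slope filtration}(i): this can be tested at the fibre of each closed point $x$, where $\varphi_l^a|_x$ commutes with $\varphi_i^a|_x$ and therefore preserves each of its generalized eigenspaces, hence $\mathscr{E}^{(i)}_{j}$ at $x$; one then descends from fibres using that a morphism of isocrystals vanishing at every closed point is zero. The hard part will be the propagation step: since $F_i$ is only a partial Frobenius of $X$, Kedlaya's theorem does not apply directly on $X$, so one must genuinely build the filtration by dévissage, and the only thing keeping the fibrewise slope decomposition coherent across the strata is the constancy of the Newton polygon.
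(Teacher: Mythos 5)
Your first move — invoking Theorem~\ref{t:slope filtration}(ii) to get constancy of the $i$-th partial Newton polygon — matches the paper, and the fibrewise characterization of what each $\mathscr{E}_j^{(i)}$ must restrict to at a closed point is exactly the right uniqueness argument. The existence step, however, is where the proposal diverges and runs into trouble.

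The propagation mechanism you lean on — Shiho's cut-by-curves theorem and Kedlaya--Shiho purity, with an induction on $\dim X$ — is in the wrong setting. Those are tools about (log-)extendability of \emph{overconvergent} isocrystals across divisors and are used in the paper in the proof of Proposition~\ref{P:pushforward}, which is the overconvergent story. Theorem~\ref{T:partial slope filtration} lives in $\PhiIsoc(X/K)$, the convergent category, and the problem is not extending an isocrystal defined on an open subset but showing that a fibrewise-defined family of subspaces is actually a sub-object of $\PhiIsoc(X/K)$. The curve-cutting lemmas do not address that. Your ``base case'' $\dim X'=0$ is also not a base case of any induction that is available: the $X_j$ are part of the data and cannot be shrunk; what you can do is restrict $\mathscr{E}$ along a closed point of $X'$, but that only gives the fibrewise answer, not a global sub-isocrystal.

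The alternative route you sketch at the end — twist and reduce via Theorem~\ref{t:slope filtration} to the diagonally unit-root case, then use Proposition~\ref{P:Crew diagonal unitroot} — is precisely the paper's route, but the ``same fibrewise reduction'' you invoke there conceals the actual content. What is needed is Lemma~\ref{L:diagonally unit-root decomposition}: for a diagonally unit-root object, the corresponding $\pi_1(X,\bar x)$-representation $V$ decomposes as a direct sum indexed by tuples of partial Frobenius slopes, and each summand is $\pi_1$-stable because $\phi_{\mathbb L,\bar x}^a$ is an automorphism of the representation commuting with the $\pi_1$-action. That representation-theoretic argument (not a fibrewise one) is what produces the global sub-$\Phi$-isocrystals. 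Finally, applying that decomposition on the graded pieces of the diagonal slope filtration gives a filtration whose graded pieces have constant partial Frobenius slopes, but the $i$-th slopes need not appear in ascending order; the paper then reorders using Lemma~\ref{L:split by slope} (the $\Ext$-vanishing by slope comparison). Your proposal never addresses this reordering/splitting issue, which is a genuine missing step: a pointwise ``take the slope-$\le\mu_j$ part'' prescription does not by itself yield a sub-isocrystal, and without the $\Ext$-vanishing one cannot promote the direct-sum decomposition on graded pieces to an ascending filtration of $\mathscr{E}$ itself.
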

\begin{lemma} \label{L:diagonally unit-root decomposition}
	Suppose that $\mathscr{E} \in \PhiIsoc(X/K)$ is diagonally unit-root.
	
	\textnormal{(i)} Then there exists a decomposition in $\PhiIsoc(X/K)$
\[
\mathscr{E}\simeq \bigoplus_{d_1,\dots,d_n} \mathscr{E}_{d_1,\dots,d_n}, 
\]
indexed by tuples $(d_1,\dots,d_n) \in \QQ^n$ with $d_1 + \cdots + d_n =
0$, in which
$\mathscr{E}_{d_1,\dots,d_n}$ has the constant partial Frobenius slopes $(d_1,\ldots,d_n)$ on $|X|$. 

\textnormal{(ii)} After taking extension of scalars, each $\Phi$-isocrystal $\mathscr{E}_{d_1,\ldots,d_n}$ is isomorphic to successive extensions of $\Phi$-isocrystals $\mathscr{E}_1\boxtimes \mathscr{E}_2\boxtimes\cdots\boxtimes \mathscr{E}_n$, where $\mathscr{E}_i\in \FIsoc(X_i/K)$ is isoclinic of slope $d_i$.  
\end{lemma}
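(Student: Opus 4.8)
The plan is to translate the problem through the Crew-type equivalence for diagonally unit-root objects and then carry out a Dieudonné--Manin style argument ``spread out over $X$''. For part~(i), I would first use Proposition~\ref{P:Crew diagonal unitroot} to replace $\mathscr{E}$ by the associated pair $(\mathbb{L},\{\phi_i\}_{i=1}^n)$, with $\mathbb{L}$ a lisse $K$-sheaf on $X$ and $\phi_i\colon F_i^*(\mathbb{L})\xrightarrow{\sim}\mathbb{L}$ commuting isomorphisms whose composition is the canonical Frobenius correspondence $\phi_{\mathbb{L}}$. It then suffices to construct, for each fixed $i$, a decomposition $\mathbb{L}\simeq\bigoplus_{d\in\QQ}\mathbb{L}^{(i)}_d$ in $\PhiIsoc(X/K)$ in which $\mathbb{L}^{(i)}_d$ is isoclinic of $i$-th partial Frobenius slope $d$. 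Indeed, since $\mathbb{L}^{(i)}_d$ is a sub-$\Phi$-isocrystal, any endomorphism of $\mathbb{L}$ in $\PhiIsoc(X/K)$ preserves it and its complement; applying this to the idempotents of the $j$-th decomposition shows the $n$ decompositions are mutually compatible, so the common refinement $\bigoplus_{\vec d}\bigcap_i\mathbb{L}^{(i)}_{d_i}$ is again a decomposition in $\PhiIsoc(X/K)$ into multi-isoclinic pieces, and evaluating $\phi_{\mathbb{L}}=\prod_i\phi_i$ at a closed point (where the $\phi_i$ become commuting linear automorphisms) shows only tuples with $d_1+\cdots+d_n=0$ occur. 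To handle a fixed $i$, I would regroup $X=X_i\times_k X'$ with $X':=\prod_{j\neq i}X_j$, set $\psi:=\prod_{j\neq i}\phi_j$, and observe that $(\mathbb{L},\phi_i,\psi)$ is a diagonally unit-root $2$-fold $\Phi$-isocrystal; this reduces the task to decomposing a pair $(\mathbb{L},\phi_1,\phi_2)$ on a product of two geometrically connected varieties by the slopes of $\phi_1$.

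In this two-variable situation $\phi_2$ is forced by $\phi_1$ and $\phi_{\mathbb{L}}$, so the data amount to a lisse $K$-sheaf $\mathbb{L}$ on $X_1\times_k X_2$ together with an arbitrary isomorphism $\phi_1\colon F_1^*(\mathbb{L})\xrightarrow{\sim}\mathbb{L}$. I would fix a lisse $\mathscr{O}_K$-lattice $\mathbb{L}^\circ$ stable under $\phi_{\mathbb{L}}$ and study the lattices $\phi_1^m((F_1^m)^*\mathbb{L}^\circ)$ for $m\in\ZZ$. By Corollary~\ref{c:constant partial Frob} the $\phi_1$-slopes are constant on the connected space $|X|$; passing to a finite extension of scalars so that they become integers and then twisting by the pullback along $p_1$ of an appropriate rank-one $F$-isocrystal on $X_1$, one may assume all $\phi_1$-slopes are $\geq 0$ with least value $0$. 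The slope filtration of $(\mathbb{L},\phi_1)$, which exists by the slope filtration theorem for convergent $F$-isocrystals with constant Newton polygon (Theorem~\ref{t:slope filtration}, \cite{kedlaya-isocrystals}), then has as bottom step the maximal subsheaf on which $\phi_1$ is unit-root; I would argue that this step splits off, the splitting being provided over every closed point by Dieudonné--Manin and globalized using the constancy of the slopes (again via the methods of \cite{kedlaya-isocrystals}). Iterating on the rank yields the slope decomposition of $\phi_1$; it is stable under $\phi_2$ since the $\phi_j$ commute, and, being canonical, it is Galois-equivariant and hence descends from the auxiliary extension back to $K$.

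For part~(ii), fix a tuple $\vec d$ with $\sum_i d_i=0$. After an extension of scalars I would choose rank-one objects $\mathscr{N}_i\in\FIsoc(X_i/K)$, pulled back from $\Spec k$, of slope $d_i$, set $\mathscr{N}_{\vec d}:=\bigotimes_i p_i^*\mathscr{N}_i\in\PhiIsoc(X)$, and note that $\mathscr{E}_{\vec d}\otimes\mathscr{N}_{\vec d}^{-1}$ has $i$-th partial Frobenius slope $0$ for every $i$, hence lies in $\PhiIsoc^{\ur}(X/K)$. By Proposition~\ref{P:Crew partial Frob} and the original Drinfeld's lemma (Theorem~\ref{t:Drinfeld lemma origin}), it corresponds to a continuous $K$-representation of $\pi_1^{\et}(X,\Phi,\overline{x})\simeq\prod_i\pi_1^{\et}(X_i,\overline{x}_i)$. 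Passing to a composition series and extending scalars further until each constituent is absolutely irreducible, Schur's lemma over an algebraically closed field identifies each graded piece with an external tensor product $\rho_1\boxtimes\cdots\boxtimes\rho_n$ of continuous representations $\rho_i$ of $\pi_1^{\et}(X_i,\overline{x}_i)$. Applying Crew's equivalence \eqref{eq:crew} on each factor and using its compatibility with external products (\S\ref{sss:crew equiv}) turns this piece into $\mathscr{F}_1\boxtimes\cdots\boxtimes\mathscr{F}_n$ with $\mathscr{F}_i\in\FIsoc^{\ur}(X_i/K)$; untwisting by $\mathscr{N}_{\vec d}$ then exhibits $\mathscr{E}_{\vec d}$ as a successive extension of objects $(\mathscr{F}_1\otimes\mathscr{N}_1)\boxtimes\cdots\boxtimes(\mathscr{F}_n\otimes\mathscr{N}_n)$, each tensor factor $\mathscr{F}_i\otimes\mathscr{N}_i$ being isoclinic of slope $d_i$.

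The main obstacle, I expect, is the globalization inside part~(i): upgrading the pointwise Dieudonné--Manin splitting of $\phi_1$ to a splitting of lisse $\mathscr{O}_K$-sheaves, i.e.\ showing that the unit-root part of the $\phi_1$-slope filtration is genuinely a direct summand rather than just the bottom step of a filtration. This is exactly where one must combine the constancy of the partial slopes (Corollary~\ref{c:constant partial Frob}), which is special to the diagonally unit-root setting, with the machinery underlying the slope filtration theorem \cite{kedlaya-isocrystals}, and then descend from the auxiliary finite extension back to $K$.
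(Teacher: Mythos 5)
Your reduction via Proposition~\ref{P:Crew diagonal unitroot} to the data $(\mathbb{L},\{\phi_i\})$ is the right first move, and your part~(ii) (Crew's equivalence plus the original Drinfeld's lemma, then Schur's lemma for a product of profinite groups on the composition factors after scalar extension) is essentially identical to the paper's argument, which implements the Schur step explicitly by choosing an irreducible $\pi_1(X_n,\overline{x})$-subrepresentation $W$ and forming $\Hom_{\pi_1(X_n,\overline{x})}(W,V)$.

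For part~(i), however, the route you sketch does not match the paper and, more importantly, leaves a real gap exactly where you flag one. You propose to regroup variables, choose a $\phi_{\mathbb{L}}$-stable lattice $\mathbb{L}^\circ$, invoke Theorem~\ref{t:slope filtration} to get a $\phi_1$-slope \emph{filtration}, and then ``argue that the bottom step splits off'' by globalizing pointwise Dieudonn\'e--Manin. But for convergent $F$-isocrystals a constant Newton polygon yields only a filtration, never a decomposition in general, and nothing in the slope filtration machinery of \cite{kedlaya-isocrystals} by itself promotes a filtration to a splitting; the splitting is genuinely an extra input here, not a detail. What makes the splitting true in the diagonally unit-root case is that one never needs to leave the world of a \emph{single} finite-dimensional fiber: setting $V:=\mathbb{L}_{\overline{x}}$ with its $\pi_1(X,\overline{x})$-action $\rho$, the commuting operators $\phi_{i,\overline{x}}$ give a Dieudonn\'e--Manin decomposition $V\simeq\bigoplus_{(d_1,\dots,d_n)}V_{d_1,\dots,d_n}$ of the vector space $V$ alone, and the entire issue is to show this decomposition of $V$ is $\rho$-stable. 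The paper does this by observing that each $\phi_{i,\overline{x}}$ preserves each $V_{d_1,\dots,d_n}$, that $\phi_{\mathbb{L},\overline{x}}^a$ is an automorphism of the representation $(V,\rho)$ (because $F_X^a$ acts as the identity on $\pi_1(X,\overline{x})$ for $x$ of degree $a$), and that this commutation forces $\rho$ to respect the decomposition. Once $V$ decomposes as a $\pi_1(X,\overline{x})$-representation, the equivalence between lisse $K$-sheaves and continuous representations immediately yields the decomposition of $\mathbb{L}$, and hence of $\mathscr{E}$. Your lattice normalization, the regrouping into a two-variable problem, and the appeal to Theorem~\ref{t:slope filtration} are all unnecessary detours, and the step you defer (``I would argue that this step splits off'') is exactly the content one cannot get from those tools; replacing it with the fiberwise $\rho$-equivariance argument is the missing idea.
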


\begin{proof} 
	(i) Let $(\mathbb{L},\phi_i)$ be the data associated to $\mathscr{E}$ in Proposition \ref{P:Crew diagonal unitroot}.
	Let $x\in |X|$ be a closed point of degree $a$ and $(V=\mathbb{L}_{\overline{x}},\rho)$ the associated $\pi_1(X,\overline{x})$-representation. 
	Since $\phi_{i,\overline{x}}$ commute with each other, $V$ admits a decomposition 
	\[
	V\simeq \bigoplus_{(d_1,\ldots,d_n)} V_{d_1,\ldots,d_n},
	\]
	according to partial Frobenius slopes of $(\phi_{1,\overline{x}},\ldots,\phi_{n,\overline{x}})$. 
	It remains to show that each component is invariant under the $\pi_1(X,\overline{x})$-action. 
	Note that each $\phi_{i,\overline{x}}$ preserves $V_{d_1,\ldots,d_n}$, as then does their composition $\phi_{\mathbb{L},\overline{x}}$. 

	Since $F_X^a$ induces an identity on \'etale fundamental groups $F_X^a\colon\pi_1(X,\overline{x})=\pi_1(X,\overline{x})$, $\phi_{\mathbb{L},\overline{x}}^a$ is an automorphism of the representation $(V,\rho)$. 
	Since $\phi_{\mathbb{L},\overline{x}}^a$ commutes with the action of $\pi_1(X,\overline{x})$ on $V$, we deduce that the $\pi_1(X,\overline{x})$-action also preserves each component $V_{d_1,\ldots,d_n}$. Then assertion (i) follows. 

	(ii) After taking extension of scalars and twisting, we may assume  $\mathscr{F}=\mathscr{E}_{d_1,\ldots,d_n}$ is unit-root. 
	We prove the assertion by induction on $n$. 
	Let $\overline{x}$ be a geometric point of $X$. 
	By Proposition \ref{P:Crew partial Frob} and Theorem \ref{t:Drinfeld lemma origin}, $\mathscr{E}$ corresponds to a continuous $K$-linear representation $V$ of $\pi_1(X_1,\overline{x})\times\cdots \times \pi_1(X_n,\overline{x})$. 
	Let $W$ be an irreducible sub-$\pi_1(X_n,\overline{x})$-representation of $V$. 
	Then $W$ corresponds to a unit-root $F$-isocrystal $\mathscr{F}_n$ on $X_n$, and $\Hom_{\pi_1(X_n,\overline{x})}(W,V)$, viewed as a $\pi_1(X_1,\overline{x})\times\cdots\times \pi_1(X_{n-1},\overline{x})$-representation, corresponds to a unit-root $\Phi$-isocrystal $\mathscr{E}'$ on $X_1\times_k\cdots\times_k X_{n-1}$. 
	We obtain a monomorphism $\mathscr{E}'\boxtimes \mathscr{F}_n\to \mathscr{F}$. 
	Applying the induction hypothesis to $\mathscr{E}'$, then assertion (ii) follows.  
\end{proof}

\begin{lemma}
	\label{L:split by slope}
	Let $\mathscr{E},\mathscr{E}'$ be two objects of $\PhiIsoc(X/K)$ with constant partial Frobenius slopes $(\mu_i)_{i=1}^n,(\mu_i')_{i=1}^n$ respectively. 

	\textnormal{(i)} If $\Hom_{\PhiIsoc(X/K)}(\mathscr{E}',\mathscr{E})\neq 0$, then $\mu_i=\mu_i'$ for every $i=1,\dots,n$. 

	\textnormal{(ii)} If $\Ext_{\PhiIsoc(X/K)}(\mathscr{E}',\mathscr{E})\neq 0$, then $\mu_i\ge \mu_i'$ for every $i=1,\dots,n$. 
\end{lemma}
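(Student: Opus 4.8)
The plan is to reduce both assertions to the behaviour of partial Frobenius slopes under subquotients. The basic remark is: at a closed point $x$ of $X$ the $i$-th partial slopes of an object of $\PhiIsoc(X/K)$ are the valuations of the eigenvalues of the linear automorphism $\varphi_i^{a}|_x$ on the fibre; since passage to the fibre at $x$ is exact, and the characteristic polynomial of $\varphi_i^{a}|_x$ on a $\varphi_i^{a}|_x$-stable subspace or on a quotient divides the one on the whole fibre, a subobject or a quotient object in $\PhiIsoc(X/K)$ of an isocrystal with constant $i$-th partial slope $\lambda$ again has all $i$-th partial slopes equal to $\lambda$ (using Lemma~\ref{l:partial Frob} to know they remain constant).

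For (i), I would take $f\colon\mathscr{E}'\to\mathscr{E}$ nonzero and set $\mathscr{G}=\operatorname{im}(f)$, a subobject of $\mathscr{E}$ and a quotient of $\mathscr{E}'$ in $\PhiIsoc(X/K)$; it is nonzero, hence of positive rank since $X$ is connected, so $\mathscr{G}_x\ne 0$ for every closed point $x$. By the remark the constant $i$-th partial slope of $\mathscr{G}$ lies both in $\{\mu_i\}$ and in $\{\mu_i'\}$, whence $\mu_i=\mu_i'$ for all $i$.

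For (ii), I would represent a nonzero class by a non-split extension $0\to\mathscr{E}'\to\mathscr{F}\to\mathscr{E}\to 0$ in $\PhiIsoc(X/K)$. At each closed point the Newton polygons concatenate, so $\mathscr{F}$ has constant diagonal Newton polygon and hence constant $i$-th partial Newton polygon for all $i$ by Theorem~\ref{t:slope filtration}(ii); its $i$-th partial slopes are $\mu_i$ with multiplicity $\rk\mathscr{E}$ and $\mu_i'$ with multiplicity $\rk\mathscr{E}'$. Fixing $i_0$, I would suppose $\mu_{i_0}<\mu_{i_0}'$ and look for a splitting. Let $\mathscr{F}_1\subseteq\mathscr{F}$ be the first step of the $i_0$-th partial slope filtration of $\mathscr{F}$ (Theorem~\ref{T:partial slope filtration}): it is $i_0$-isoclinic of slope $\mu_{i_0}$, of rank $\rk\mathscr{E}$ since only $\mathscr{E}$ contributes that slope. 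By (i), $\mathscr{F}_1\cap\mathscr{E}'=0$ (the $i_0$-slopes differ), so $\mathscr{F}_1$ injects into $\mathscr{F}/\mathscr{E}'=\mathscr{E}$, and equality of ranks makes this an isomorphism; then $\mathscr{F}=\mathscr{E}'\oplus\mathscr{F}_1$ and the extension splits --- a contradiction. Hence $\mu_{i_0}\ge\mu_{i_0}'$ for each $i_0$.

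The hard part is that Theorem~\ref{T:partial slope filtration} is presumably deduced from these very lemmas, so (ii) ought to be proved without it. The device I would fall back on is Dwork's: forgetting all partial Frobenius structures on $\mathscr{F}$ except the $i_0$-th, the class of the extension $0\to\mathscr{E}'\to\mathscr{F}\to\mathscr{E}\to 0$ in $F_{i_0}\text{-}\Isoc(X/K)$ is computed by the mapping cone of $\operatorname{Id}-\psi_{i_0}$ on $\mathbf{R}\Gamma_{\mathrm{rig}}(X,\mathscr{E}^{\dual}\otimes\mathscr{E}')$, where $\psi_{i_0}$ is induced by $\varphi_{i_0}$ and $F_{i_0}^{*}$. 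Since $\mathscr{E}^{\dual}\otimes\mathscr{E}'$ is $i_0$-isoclinic of nonzero slope $\mu_{i_0}'-\mu_{i_0}$, flat sections generate $i_0$-isoclinic sub-$F_{i_0}$-isocrystals, so $\operatorname{Id}-\psi_{i_0}$ is invertible on $\mathrm{H}^0_{\mathrm{rig}}$, and on $\mathrm{H}^1_{\mathrm{rig}}$ its invertibility should follow from a slope lower bound for the $i_0$-th partial Frobenius (a partial-Frobenius analogue of the Weil~II slope estimates). Granting this, the extension is already split over $F_{i_0}\text{-}\Isoc(X/K)$; the decomposition of $\mathscr{F}$ into $i_0$-isoclinic parts is canonical by (i), hence preserved by the remaining $\varphi_j$, and its $i_0$-slope-$\mu_{i_0}$ summand is a $\Phi$-subobject splitting the original sequence. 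The main obstacle is precisely this $\mathrm{H}^1$ slope estimate, together with verifying that the Dwork computation does not rely circularly on Theorem~\ref{T:partial slope filtration}.
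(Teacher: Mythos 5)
Your part (i) is correct and a bit more direct than the paper's: the paper instead pulls back along a closed point of $X':=\prod_{j\neq i}X_j$ via the functor \eqref{eq:partial Frob to FIsocd} and quotes the known result for convergent $F$-isocrystals on $X_{i,k'}$; your image-of-$f$ argument is a valid alternative.

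For part (ii), you correctly spot both the right tool (the Dwork-style computation of $\Ext^1_{F_{i_0}\textnormal{-}\Isoc}$ as a cone of $\id-\varphi_{i_0}$ on cohomology) and the obstacle (a slope bound on $\mathrm{H}^1$ for partial Frobenius). But you leave that obstacle open, and that is exactly where the real work lies --- so this is a genuine gap, not a cosmetic one. The paper avoids having to prove any new $\mathrm{H}^1$ slope estimate by a specific reduction that your proposal does not use: set $\mathscr{F}:=\FHom(\mathscr{E}',\mathscr{E})$, which is diagonally isoclinic, and invoke Lemma~\ref{L:diagonally unit-root decomposition}(ii) to reduce (after a finite extension of scalars) to the case $\mathscr{F}\simeq\mathscr{F}_1\boxtimes\cdots\boxtimes\mathscr{F}_n$ with $\mathscr{F}_j\in\FIsoc(X_j/K)$. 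Then the K\"unneth formula for the sheaf of differential operators \cite[Lemma 4.5]{Abe14} factors $\mathrm{H}^*(X,\mathscr{F})$ through $\mathrm{H}^*(X_{i_0},\mathscr{F}_{i_0})$, and since $\mathscr{F}_{i_0}$ has nonzero (constant) slope one gets the vanishing of $\mathrm{H}^0(X_{i_0},\mathscr{F}_{i_0})^{\varphi_{i_0}=\id}$, $\mathrm{H}^0(X_{i_0},\mathscr{F}_{i_0})/(\varphi_{i_0}-\id)$, and $\mathrm{H}^1(X_{i_0},\mathscr{F}_{i_0})^{\varphi_{i_0}=\id}$, the last being precisely the one-variable version of (ii), i.e.\ the known slope-filtration fact for convergent $F$-isocrystals. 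In other words, the needed ``$\mathrm{H}^1$ slope estimate'' is the classical one on a single factor, imported through K\"unneth; no new estimate for partial Frobenii is required. Without the box-product reduction your argument cannot close, and you also correctly note (but do not remove) the potential circularity through Theorem~\ref{T:partial slope filtration} --- the paper's route never touches that theorem.

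You should also be careful with one small claim near the end of your sketch: once the extension splits in $F_{i_0}\textnormal{-}\Isoc(X/K)$, passing back to a $\Phi$-splitting requires an argument (the paper's sequence \eqref{eq:ext} already lives in $F_i\textnormal{-}\Isoc$ and it then concludes via the functor $\PhiIsoc\to F_i\textnormal{-}\Isoc$ being faithful on extensions), and ``the decomposition of $\mathscr{F}$ into $i_0$-isoclinic parts is canonical by (i)'' is not quite what (i) says; the canonicity comes from uniqueness of the slope filtration, which again is a downstream fact. Better to phrase the conclusion entirely in terms of the vanishing of the middle term of \eqref{eq:ext}.
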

\begin{proof}
	(i) Let $i$ be an integer, $X':=\prod_{j\neq i} X_j$ and $k'$ a closed point of $X'$.
	We apply the exact and faithful functor \eqref{eq:partial Frob to FIsocd} to the pullback along $\Spec(k')\to X'$ to obtain convergent $F$-isocrystals on $X_{i,k'}/K'$. 
	Then the assertion follows from the corresponding assertion for convergent $F$-isocrystals. 

	(ii) We will deduce this vanishing result of $\Ext$ from the case of $F$-isocrystals, which is known (it follows from the existence of the usual slope filtration). 
	We set $\mathscr{F}:=\FHom(\mathscr{E}',\mathscr{E})$. 
	By Lemma \ref{L:diagonally unit-root decomposition}(ii), we may reduce to the case where $\mathscr{F}\simeq \mathscr{F}_1\boxtimes\cdots\boxtimes \mathscr{F}_n$, with $\mathscr{F}_i\in \FIsoc(X_i/K)$ after taking extension of scalars. 
	Suppose $\mu_i'>\mu_i$ for some integer $i\in [1,n]$. 
	We consider the extension group $\Ext^1_{F_i\textnormal{-}\Isoc(X/K)}(\mathscr{E}',\mathscr{E})$, which fits into the following diagram:
	\begin{equation} \label{eq:ext}
	0\to \rH^0(X,\mathscr{F})/(\varphi_i-\id)\to \Ext^1_{F_i\textnormal{-}\Isoc(X/K)}(\mathscr{E}',\mathscr{E}) \to \rH^1(X,\mathscr{F})^{\varphi_i=\id}\to 0.
\end{equation}

	Since the slope of $\mathscr{F}_i$ is $>0$, we obtain the vanishing of $\rH^0(X_i,\mathscr{F}_i)^{\varphi_i=\id},\rH^0(X_i,\mathscr{F}_i)/(\varphi_i-\id)$, and $\rH^1(X_i,\mathscr{F}_i)^{\varphi_i=\id}$; note that the last case follows from the assertion (ii) for convergent $F$-isocrystals. 
	We conclude that the middle term of \eqref{eq:ext} vanishes by the Künneth formula \cite[Lemma 4.5]{Abe14}. Then the assertion follows.  
\end{proof}
\begin{secnumber} \textit{Proof of Theorem \ref{T:partial slope filtration}.} 
	By Theorem \ref{t:slope filtration}, each partial Newton polygon of $\mathscr{E}$ is also constant. 
	Then we apply Lemma \ref{L:diagonally unit-root decomposition} to the successive quotients of the diagonal slope filtration; this yields a filtration in which each successive quotient has the property that every $i$-th partial Frobenius slope has a fixed value, but these values may not occur in ascending order. However, using Lemma \ref{L:split by slope} we can reorder the successive quotients to enforce this condition. \hfill \qed
\end{secnumber}

\subsection{Drinfeld's lemma for convergent $\Phi$-isocrystals}

Recall that the category $\FIsoc(X/K)\otimes_K \bQp$ (resp. $\PhiIsoc(X/K)\otimes_K\bQp$) is denoted by $\FIsoc(X)$ (resp. $\PhiIsoc(X)$) and is neutral Tannakian over $\bQp$. 
With notations of \S~\ref{sss:Tannakian gps}, we show a version of Drinfeld's lemma for convergent $F$-isocrystals as in Theorem \ref{t:main thm}. 

\begin{theorem}
	\label{t:Drinfeld lemma conv}
	Assume $X_i$ is a smooth geometrically connected $k$-variety. 
	The following canonical homomorphism is an isomorphism:
	\begin{equation} \label{eq:Drinfeld lemma conv}
		\prod_{i=1}^n p_i^{\circ}\colon
		\pi_1^{\PhiIsoc}(X) \xrightarrow{\sim} \prod_{i=1}^n \pi_1^{\FIsoc}(X_i) . 
	\end{equation}	
\end{theorem}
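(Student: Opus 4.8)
The plan is to transpose the proof of Theorem~\ref{t:main thm} to the convergent setting; the one genuinely new ingredient is a pushforward functor, which here must be constructed by hand since the arithmetic $\mathscr{D}$-module formalism is not available for convergent coefficients. As in~\ref{pf main thm} I would argue by induction on $n$: after enlarging $k$ we may assume that $X_1$ has a $k$-rational point $x$, and, writing $X' := \prod_{i=2}^n X_i$ and $u = (x,\id)\colon X' \hookrightarrow X$, it suffices to check that the functors
\[
\FIsoc(X_1) \xrightarrow{p_1^*} \PhiIsoc(X) \xrightarrow{u^*} \PhiIsoc(X')
\]
satisfy the conditions of Theorem~\ref{exact criterion} (here $\PhiIsoc(X')$ denotes the partial-Frobenius category of $X'$, and $u^*$ forgets the automorphism induced by $u^*\varphi_1$, exactly as in~\S\ref{pf main thm}); this produces the exact sequence $1 \to \pi_1^{\PhiIsoc}(X') \xrightarrow{u^\circ} \pi_1^{\PhiIsoc}(X) \xrightarrow{p_1^\circ} \pi_1^{\FIsoc}(X_1) \to 1$, and together with the inductive identification $\pi_1^{\PhiIsoc}(X') \xrightarrow{\sim} \prod_{i=2}^n \pi_1^{\FIsoc}(X_i)$ and the obvious commutativities, a five-lemma argument as in~\eqref{eq:compare tannakian exact sequences} yields~\eqref{eq:Drinfeld lemma conv}. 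Full faithfulness of $p_1^*$, needed to even state these conditions, follows from the Künneth formula for rigid cohomology and the fact that $X'$ is geometrically connected.

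The heart of the proof is the construction of a functor $p_{1,*}\colon \PhiIsoc(X) \to \FIsoc(X_1)$, right adjoint to $p_1^*$, for which the unit $\id \to p_{1,*}p_1^*$ is an isomorphism and the counit $p_1^*p_{1,*} \to \id$ is injective; this is where the machinery of~\S\ref{s:conv case} is used. For a diagonally unit-root object, Proposition~\ref{P:Crew partial Frob} and the already-established original Drinfeld's lemma (Theorem~\ref{t:Drinfeld lemma origin}) identify it with a continuous representation of $\pi_1^{\et}(X_1,\overline{x}_1) \times \pi_1^{\et}(X',\overline{x}')$, and one takes $p_{1,*}$ to be the functor of $\pi_1^{\et}(X',\overline{x}')$-invariants, for which the asserted properties of unit and counit are immediate. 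For an object with constant diagonal Newton polygon one uses the diagonal slope filtration (Theorem~\ref{t:slope filtration}), twisting each graded piece to be diagonally unit-root and reassembling, the partial slope filtrations (Theorem~\ref{T:partial slope filtration}), the decomposition of diagonally unit-root objects into external products (Lemma~\ref{L:diagonally unit-root decomposition}) and the $\Hom$/$\Ext$ vanishing of Lemma~\ref{L:split by slope}; the reduction of an extension of scalars at a closed point of $X'$ back to $X'$ is handled by the exact faithful functor~\eqref{eq:partial Frob to FIsocd}. Finally, an arbitrary $\mathscr{E} \in \PhiIsoc(X)$ acquires a constant diagonal Newton polygon after restriction to the dense open $U_1 \subseteq X_1$ on which the generic Newton polygon is attained; there one applies the previous construction to $\mathscr{E}|_{U_1 \times X'}$ and extends the resulting convergent $F$-isocrystal back over $X_1$ by the local-monodromy argument already used in the proof of Proposition~\ref{P:pushforward} (the boundary monodromy of the pushforward being controlled by that of $\mathscr{E}$, which is defined near $X_1 \setminus U_1$), using full faithfulness of restriction (Proposition~\ref{P:fully faithful}(ii)) to identify the extension with $p_{1,*}\mathscr{E}$. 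That the unit is an isomorphism and the counit injective is then checked on fibers via Künneth exactly as in Proposition~\ref{P:pushforward}(ii), and any quasiprojectivity hypotheses are removed by de Jong alterations as in the overconvergent case.

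Granting $p_{1,*}$ and its two properties, the verification of Theorem~\ref{exact criterion} for $\FIsoc(X_1) \xrightarrow{p_1^*} \PhiIsoc(X) \xrightarrow{u^*} \PhiIsoc(X')$ is formally identical to~\S\ref{pf main thm}: condition~(i) (faithful flatness of $p_1^\circ$) follows by applying $p_1^*p_{1,*}$ to a short exact sequence $0 \to \mathscr{F} \to p_1^*\mathscr{E} \to \mathscr{G} \to 0$ and using the snake lemma together with injectivity of the counit and $\id \xrightarrow{\sim} p_{1,*}p_1^*$; conditions~(ii) and~(iii.c) hold because $u$ is a section of the projection $X \to X'$; and for~(iii.a) and~(iii.b) one uses smooth base change $u^*p_1^*p_{1,*}\mathscr{E} \simeq g^*g_*u^*\mathscr{E}$ (with $g\colon X' \to \Spec k$), which is the maximal trivial subobject of $u^*\mathscr{E}$ in $\PhiIsoc(X')$, reducing both to the injectivity of $p_1^*p_{1,*} \to \id$. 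I expect the main obstacle to be exactly the construction of $p_{1,*}$ on objects of non-constant Newton polygon — the extension of the pushforward across $X_1 \setminus U_1$ together with the compatibility with partial Frobenius structures — which is why the slope-filtration results of~\S\ref{s:conv case} and a purity argument in the style of Proposition~\ref{P:pushforward} are indispensable.
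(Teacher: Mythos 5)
You correctly identify the overall structure: induction on $n$, verification of the exactness criterion (Theorem~\ref{exact criterion}) via a right adjoint $p_{1,*}$ of $p_1^*$ with unit an isomorphism and counit injective, and full faithfulness of $p_1^*$ via Künneth. You also correctly see that the slope-theoretic machinery (Crew's equivalence via Proposition~\ref{P:Crew partial Frob}, the already-proven Theorem~\ref{t:Drinfeld lemma origin}, slope filtrations, the unit-root decomposition) is the technical heart. However, your proposed construction of $p_{1,*}$ has a genuine gap at the last stage.

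The problem is your step of extending $p_{1,*}(\mathscr{E}|_{U_1\times X'})$, built over a dense open $U_1\subset X_1$ where the diagonal Newton polygon is constant, to a convergent $F$-isocrystal over all of $X_1$. The local-monodromy and purity arguments from the proof of Proposition~\ref{P:pushforward} (invoking \cite[Proposition 5.3.1]{Ked07}, Kedlaya--Shiho purity, and constancy of local monodromy around a smooth divisor) are overconvergent-specific tools; they do not transfer to the convergent setting, where a convergent isocrystal on a dense open does not in general extend across a codimension-one boundary and where there is no usable notion of ``local monodromy'' around a divisor. Full faithfulness of restriction (Proposition~\ref{P:fully faithful}(ii)) extends \emph{morphisms} between objects already defined on $X_1$, not \emph{objects}, so it cannot close this gap either.

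The paper avoids the problem by a more abstract definition of the pushforward. First it establishes (Proposition~\ref{p:pullback subquotients}) that $p_i^*\colon \FIsoc(X_i)\to\PhiIsoc(X)$ is fully faithful with essential image closed under subquotients — and this is where the slope-filtration/Crew/Drinfeld machinery you assembled is actually deployed: one shrinks to a partial-Frobenius-stable open (Drinfeld's lemma for open immersions), reduces to the unit-root case via Theorem~\ref{t:slope filtration} and a twist, and applies Proposition~\ref{P:Crew partial Frob} and Theorem~\ref{t:Drinfeld lemma origin}. Crucially, in that proof one is only checking that two morphisms already defined on all of $X$ coincide, which can legitimately be done after restriction to an open; no extension of objects is required. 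Then $p_{i,*}(\mathscr{E})$ is defined \emph{directly} as the object of $\FIsoc(X_i)$ whose pullback is the maximal subobject of $\mathscr{E}$ in $\PhiIsoc(X)$ lying in the essential image of $p_i^*$ (such a maximal subobject exists because the essential image is closed under subquotients, hence under sums, and $\mathscr{E}$ has finite rank). Everything here already lives on the whole of $X$ and $X_1$, so no extension across a boundary ever arises. The base-change property of $p_{i,*}$ (Proposition~\ref{p:pushforward conv}(ii)), needed for conditions~(iii.a) and~(iii.b), is where the reduction to the diagonally unit-root case and the identification with $\prod_{j\neq i}\pi_1(X_j,\overline{x})$-invariants is carried out; the restriction is again harmless there because the base-change map is already globally defined. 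Two smaller points: Proposition~\ref{P:Crew partial Frob} applies to \emph{unit-root}, not merely \emph{diagonally unit-root}, $\Phi$-isocrystals (you must first decompose and twist via Lemma~\ref{L:diagonally unit-root decomposition}); and de Jong alterations are not needed in this theorem since each $X_i$ is already assumed smooth and Proposition~\ref{P:fully faithful} reduces directly to the affine case.
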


\begin{prop}
	\label{p:pullback subquotients}
	The pullback functor:
	\[
		p_i^*\colon\FIsoc(X_i)\to \PhiIsoc(X) 
	\]
	is fully faithful and its essential image is closed under subquotients.
\end{prop}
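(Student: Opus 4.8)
The plan is to reproduce, in the convergent setting, the mechanism used for the overconvergent case in \S\ref{pf main thm}: produce a pushforward functor $p_{i,*}\colon \PhiIsoc(X)\to\FIsoc(X_i)$ that is right adjoint to $p_i^*$, for which the unit $\id\to p_{i,*}p_i^*$ is an isomorphism and the counit $p_i^*p_{i,*}\to\id$ is injective, and then run the same diagram chase. Since $p_i^*$ is a tensor functor, it suffices to show its essential image is closed under subobjects (duals then give quotients, hence all subquotients). Full faithfulness is the easy part: passing to $\FHom$ and unit objects, one must identify $\Hom_{\PhiIsoc(X)}(\mathbf 1,p_i^*\mathscr G)$ with $\Hom_{\FIsoc(X_i)}(\mathbf 1,\mathscr G)$ for $\mathscr G\in\FIsoc(X_i)$; the left side is the part of $\rH^0(X,p_i^*\mathscr G)$ fixed by all partial Frobenii, and the Künneth formula for convergent isocrystal cohomology identifies $\rH^0(X,p_i^*\mathscr G)$ with $\rH^0(X_i,\mathscr G)\otimes_K\bigotimes_{j\ne i}\rH^0(X_j,\mathcal O_{X_j})$. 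Since each $X_j$ is geometrically connected the auxiliary factors are copies of $K$ with trivial Frobenius, so imposing $\varphi_j$-invariance for $j\ne i$ is vacuous and one is left with $\rH^0(X_i,\mathscr G)^{\varphi_i}$. The same computation yields $\id\xrightarrow{\sim}p_{i,*}p_i^*$ once $p_{i,*}$ is available.

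The construction of $p_{i,*}$ is where the slope-filtration machinery enters. When $\mathscr E\in\PhiIsoc(X)$ has constant diagonal Newton polygon, Theorem \ref{t:slope filtration} supplies its diagonal slope filtration; after twisting, each graded piece is diagonally unit-root, and Lemma \ref{L:diagonally unit-root decomposition} presents it, after a finite extension of scalars, as an iterated extension of exterior products $\mathscr E_1\boxtimes\cdots\boxtimes\mathscr E_n$. On such a box product one sets $p_{i,*}(\mathscr E_1\boxtimes\cdots\boxtimes\mathscr E_n):=\mathscr E_i\otimes_K\bigotimes_{j\ne i}\rH^0(X_j,\mathscr E_j)$, manifestly the value of the right adjoint of $p_i^*$; left exactness of $p_i^*$ and of $\rH^0$ along the auxiliary factors then extend this to the iterated extensions, and one checks the definition descends through the scalar extension and the twist. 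For general $\mathscr E$ one removes the constant-Newton-polygon hypothesis by dévissage along the Newton polygon stratification of $\mathscr E$ on $\prod_i X_i$ and induction on dimension, using descent for convergent isocrystals and the full faithfulness of restriction to dense opens (Proposition \ref{P:fully faithful}). The adjunction $(p_i^*,p_{i,*})$ and the injectivity of $p_i^*p_{i,*}\to\id$ are verified fiberwise and, in the unit-root case, via Proposition \ref{P:Crew partial Frob} and the original Drinfeld's lemma (Theorem \ref{t:Drinfeld lemma origin}), which reduce the claims to the transparent statement that a subrepresentation of a representation of $\prod_j\pi_1^{\et}(X_j,\overline x_j)$ inflated from the $i$-th factor is itself inflated from that factor.

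With $p_{i,*}$ in hand, closure under subobjects follows by the argument of \S\ref{pf main thm}: given $\mathscr E_i\in\FIsoc(X_i)$ and a subobject $\mathscr F\subseteq p_i^*\mathscr E_i$ with quotient $\mathscr G$, apply $p_i^*p_{i,*}$ to $0\to\mathscr F\to p_i^*\mathscr E_i\to\mathscr G\to0$. Left exactness of $p_{i,*}$ together with injectivity of $p_i^*p_{i,*}\to\id$ produces a commutative diagram with exact rows in which the middle vertical arrow is the identity and the outer two are injective, forcing $p_i^*p_{i,*}\mathscr F\xrightarrow{\sim}\mathscr F$. Hence $\mathscr F\simeq p_i^*(p_{i,*}\mathscr F)$ lies in the essential image of $p_i^*$, and $p_{i,*}\mathscr F\subseteq p_{i,*}p_i^*\mathscr E_i=\mathscr E_i$; this also hands over the faithful flatness of $p_i^\circ$ needed for Theorem \ref{t:Drinfeld lemma conv} via Theorem \ref{exact criterion}(i).

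The main obstacle is precisely the construction and basic properties of $p_{i,*}$. In the overconvergent case these came for free from the six-functor formalism for arithmetic $\mathscr D$-modules (Proposition \ref{P:pushforward}); here there is no such formalism, so one must assemble the functor from the slope decomposition — where on exterior products it reduces to $\rH^0$ along the auxiliary factors — while controlling the Newton polygon stratification carefully enough that the resulting functor is well defined and left exact on all of $\PhiIsoc(X)$, and while checking its adjunction properties survive the reductions (twist, scalar extension, dévissage). Everything downstream is a formal consequence, matching the overconvergent template.
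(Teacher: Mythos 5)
Your full-faithfulness argument matches the paper's: reduce via internal $\FHom$ to global sections, apply Künneth for $\rH^0$ of a box product, and observe the auxiliary factors contribute $\rH^0(X_j,\mathcal O_{X_j})=K$ with trivial Frobenius. The problem lies in the closure-under-subquotients part, where you propose to first construct a right adjoint $p_{i,*}$ and then run the diagram chase from \S\ref{pf main thm}. This inverts the paper's logical order in a way that is, in effect, circular. In the paper, $p_{i,*}$ is introduced only afterwards (Proposition \ref{p:pushforward conv}) and is \emph{defined} by taking the maximal subobject of $\mathscr{E}$ lying in the essential image of $p_i^*$; that construction is meaningful precisely because Proposition \ref{p:pullback subquotients} already guarantees the essential image is closed under subobjects. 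Your alternative recipe for $p_{i,*}$ — a formula on box products, ``extend to iterated extensions,'' descend through scalar extension and twist, and dévissage over the Newton stratification — does not define a functor. A pointwise formula on box products does not propagate to extensions by left exactness; at each stage (extensions, descent of scalars, passage from a dense open) you would have to show that a certain subobject of an image under $p_i^*$ again lies in that image, which is the statement being proved. You also propose to verify the adjunction and injectivity of the counit ``in the unit-root case'' via Proposition \ref{P:Crew partial Frob} and Theorem \ref{t:Drinfeld lemma origin}, but the verification you sketch — that a subquotient of a representation inflated from the $i$-th factor is itself inflated — is the entire content of the proposition in the unit-root case; so you would be proving the proposition in order to build the adjoint in order to reprove the proposition.

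The paper's argument is shorter and avoids $p_{i,*}$ altogether. Given a quotient $p_n^*\mathscr{E}\to\mathscr{F}$, it immediately writes down the candidate source $\mathscr{G}:=(x\times\id_{X_n})^*\mathscr{F}$ for a $k$-point $x$ of the auxiliary factor, and sets out to show $p_n^*\mathscr{G}\simeq\mathscr{F}$. Using Drinfeld's lemma for open immersions together with Proposition \ref{P:fully faithful}, one may shrink to make diagonal Newton polygons constant; Theorem \ref{t:slope filtration} and twisting then reduce to $\mathscr{E}$ unit-root, whence $\mathscr{F}$ is a unit-root $\Phi$-isocrystal. At that point Crew's equivalence (Proposition \ref{P:Crew partial Frob}) plus the original Drinfeld's lemma (Theorem \ref{t:Drinfeld lemma origin}) turn the claim into the representation-theoretic triviality you identify — a quotient of a $\prod_j\pi_1(X_j)$-representation on which the factors $j\ne n$ act trivially again has trivial action by those factors — but applied directly to the given quotient, not routed through a not-yet-constructed $p_{i,*}$. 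Your instinct about which tools matter (slope filtration, Crew, $\ell$-adic Drinfeld) is right; the fix is to deploy them directly on the subquotient rather than packaging them into an adjoint whose existence presupposes the result.
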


\begin{proof}
	We may assume $i=n$ and set $X':=\prod_{i=1}^{n-1}X_i$.

	(i) We first prove the full faithfulness. 
	By Proposition \ref{P:fully faithful}, we may assume each $X_i$ is affine and admits a smooth formal lifting $\XX_i$ over $\mathscr{O}_K$. 
	Let $(\mathscr{E},\varphi)$ be an object of $\FIsoc(X_n)$. Since each $X_i$ is smooth and geometrically connected, the canonical morphism
	\[
	\rH^0(X_n,\mathscr{E})\xrightarrow{\sim} \rH^0(X,p_n^*(\mathscr{E}))
	\]
is an isomorphism by the Künneth formula for the sheaf of differential operators \cite[Lemma 4.5]{Abe14} and for quasicoherent modules. 
	Then we deduce a canonical isomorphism: 
	\[
	\rH^0(X_n,\mathscr{E})^{\varphi=\id}\to \rH^0(X,p_n^*(\mathscr{E}))^{\Phi=\id}.
	\]
	The full faithfulness follows from the above isomorphism applied to internal homomorphisms.	

	(ii) Next we treat the second assertion. 
	Let $\mathscr{E}$ be an object of $\FIsoc(X_n)$ and $p_n^*(\mathscr{E})\to \mathscr{F}$ a surjection in $\PhiIsoc(X)$. 
	By enlarging $k$ \eqref{eq:ext sca}, we may assume there exists a $k$-point $x$ of $X'$. 
	We will show that the surjection $\mathscr{E}\to \mathscr{G}:=(x\times_k \id_{X_n})^*(\mathscr{F})$ over $X_n$ is isomorphic to $p_n^*(\mathscr{E})\to \mathscr{F}$ after pullback via $p_n^*$. 

	Let $U$ be a dense open subset of $X$ such that the diagonal Newton polygon of $\mathscr{F}$ is constant, which is preserved by partial Frobenius morphisms. 
	By Drinfeld's lemma for open immersions \cite[Lemma~9.2.1]{lau}, \cite[Theorem~4.3.6]{kedlaya-aws} and Proposition \ref{P:fully faithful}, we may assume that the diagonal Newton polygons of $p_n^*(\mathscr{E})$ and $\mathscr{F}$ are constant on $|X|$ after shrinking $X_i$.
	Then the set of each partial Newton polygon of $\mathscr{F}$ is also constant on $|X|$. 
	To show the claim, we may reduce to the case where the Frobenius slopes of $\mathscr{E}$ are constant on $X_n$ by applying Theorem \ref{t:slope filtration} to $p_n^*(\mathscr{E})\to \mathscr{F}$. 
	Moreover, we may assume $\mathscr{E}$ is unit-root by twisting. 
	Then $\mathscr{F}$ is also a unit-root $\Phi$-isocrystal. 
	By Proposition \ref{P:Crew partial Frob} and Theorem \ref{t:Drinfeld lemma origin}, for a geometric point $\overline{y}$ of $X$, the action of $\prod_{i=1}^{n-1}\pi_1(X_i,\overline{y})$ on $p_n^*(\mathscr{E})_{\overline{y}}$ is trivial. 
	Then the same holds for $\mathscr{F}$ and the assertion follows. 
\end{proof}

\begin{prop} \label{p:pushforward conv}
	\textnormal{(i)} The functor $p_i^*:\FIsoc(X_i)\to \PhiIsoc(X)$ admits a right adjoint $p_{i,*}$. 
	Moreover, the canonical morphism $\id\to p_{i,*}p_i^*$ is an isomorphism and $p_i^*p_{i,*}\to \id$ is injective. 

	\textnormal{(ii)} The functor $p_{i,*}$ commutes with the base change of $g\colon Y\to X_i$. 
\end{prop}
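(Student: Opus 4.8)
The engine is Proposition~\ref{p:pullback subquotients}: $p_i^*$ is fully faithful and its essential image $\mathscr{I}_i\subset\PhiIsoc(X)$ is closed under subquotients, hence --- being the image of an exact tensor functor --- also under finite direct sums. For part~(i), every object of $\PhiIsoc(X)$ has finite length (its underlying convergent isocrystal has finite rank), so each $\mathscr{E}$ possesses a largest subobject $\mathscr{E}_{\mathscr{I}_i}\in\mathscr{I}_i$: the sum of all its subobjects lying in $\mathscr{I}_i$, which is a quotient of a finite direct sum of such and therefore again in $\mathscr{I}_i$. By full faithfulness there is a unique $p_{i,*}\mathscr{E}\in\FIsoc(X_i)$ with $p_i^*(p_{i,*}\mathscr{E})\cong\mathscr{E}_{\mathscr{I}_i}$; this is functorial, the inclusion $\mathscr{E}_{\mathscr{I}_i}\hookrightarrow\mathscr{E}$ serves as counit (visibly injective), and the adjunction $\Hom(p_i^*\mathscr{G},\mathscr{E})\cong\Hom(\mathscr{G},p_{i,*}\mathscr{E})$ holds because any $p_i^*\mathscr{G}\to\mathscr{E}$ has image a quotient of $p_i^*\mathscr{G}$, hence in $\mathscr{I}_i$, hence inside $p_i^*(p_{i,*}\mathscr{E})$. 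Taking $\mathscr{E}=p_i^*\mathscr{G}$ gives $\mathscr{E}_{\mathscr{I}_i}=p_i^*\mathscr{G}$, so $p_{i,*}p_i^*\mathscr{G}\cong\mathscr{G}$ and the unit is an isomorphism. The same argument runs over any smooth geometrically connected base, so (for such $Y$) the functor $q^*$ below also has a right adjoint $q_*$.

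For part~(ii), put $X'=\prod_{j\neq i}X_j$ and let $\widetilde g=g\times\id_{X'}\colon Y\times_k X'\to X$ and $q\colon Y\times_k X'\to Y$ be the projections out of $Y\times_{X_i}X=Y\times_k X'$, so that $\widetilde g^*p_i^*=q^*g^*$. Applying the exact functor $\widetilde g^*$ to the counit of~(i) yields a monomorphism $q^*(g^*p_{i,*}\mathscr{E})\hookrightarrow\widetilde g^*\mathscr{E}$ whose adjoint is the base-change morphism $g^*p_{i,*}\mathscr{E}\to q_*\widetilde g^*\mathscr{E}$; this is an isomorphism precisely when $q^*(g^*p_{i,*}\mathscr{E})$ is the largest subobject of $\widetilde g^*\mathscr{E}$ in the essential image of $q^*$. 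To verify this I would reduce to external tensor products: the formation of $p_{i,*}$ and of the base-change morphism is compatible with extensions of scalars, and, by the reduction used in the proof of Proposition~\ref{p:pullback subquotients}(ii) --- shrinking the $X_i$ via Drinfeld's lemma for open immersions (\cite[Lemma~9.2.1]{lau}, \cite[Theorem~4.3.6]{kedlaya-aws}) together with the full faithfulness of Proposition~\ref{P:fully faithful}(ii) --- one is reduced to the case where the diagonal Newton polygon of $\mathscr{E}$ is constant on $|X|$. Then Theorem~\ref{t:slope filtration} makes all the partial Newton polygons constant, and applying Lemma~\ref{L:diagonally unit-root decomposition} to the successive quotients of the diagonal slope filtration (after a twist and a finite extension of scalars) reduces us to $\mathscr{E}\cong\mathscr{E}_1\boxtimes\cdots\boxtimes\mathscr{E}_n$ with $\mathscr{E}_j\in\FIsoc(X_j)$.

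In this last case the K\"unneth formula for $\Hom$-groups of convergent isocrystals \cite[Lemma~4.5]{Abe14}, together with the adjunction of~(i), identifies $p_{i,*}(\mathscr{E}_1\boxtimes\cdots\boxtimes\mathscr{E}_n)$ with $\mathscr{E}_i\otimes_K\bigotimes_{j\neq i}\Hom_{\FIsoc(X_j)}(\mathbf{1},\mathscr{E}_j)$; since $\widetilde g$ touches only the $i$-th tensor factor, the same computation gives $q_*\widetilde g^*(\mathscr{E}_1\boxtimes\cdots\boxtimes\mathscr{E}_n)\cong g^*\mathscr{E}_i\otimes_K\bigotimes_{j\neq i}\Hom_{\FIsoc(X_j)}(\mathbf{1},\mathscr{E}_j)$, which is $g^*$ of the former, and one checks the base-change morphism realizes this identification. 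I expect part~(ii) to be the main obstacle: unlike the overconvergent pushforward of \S\ref{sec-overconvergent}, the convergent pushforward is not part of a six-functor formalism, so base change cannot be read off from abstract smooth base change and must instead be extracted from the slope structure theory of this section --- and the care lies in keeping the successive reductions (to constant Newton polygon, then to external tensor products) compatible with the base-change morphism itself.
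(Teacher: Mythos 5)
Part~(i) of your proposal is essentially the paper's argument: you define $p_{i,*}\mathscr{E}$ as the object whose pullback is the maximal subobject of $\mathscr{E}$ lying in the essential image of $p_i^*$, using Proposition~\ref{p:pullback subquotients} (full faithfulness and closure under subquotients) to guarantee the maximum exists and to deduce the adjunction, unit/counit properties. The paper phrases the maximality via the image of $p_i^*(p_{i,*}(\mathscr{E}_1))$ under a morphism to $\mathscr{E}_2$ to show functoriality, but the substance is the same.

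For part~(ii), you take a genuinely different and, as stated, incomplete route. The paper's proof reduces (via Proposition~\ref{P:fully faithful} and Theorem~\ref{t:slope filtration}) to the diagonally unit-root case, and then invokes the Crew-type equivalence of Proposition~\ref{P:Crew partial Frob} together with the \'etale Drinfeld lemma (Theorem~\ref{t:Drinfeld lemma origin}) to identify $p_{i,*}(\mathscr{E})$ with the $\prod_{j\neq i}\pi_1(X_j,\overline{x})$-invariants of the representation $V$ of $\pi_1(X,\Phi,\overline{x})\cong\prod_j\pi_1(X_j,\overline{x})$ corresponding to the unit-root component. In that picture base change along $g\colon Y\to X_i$ is immediate: taking $\prod_{j\neq i}\pi_1(X_j)$-invariants visibly commutes with pulling the $\pi_1(X_i)$-action back along $\pi_1(Y)\to\pi_1(X_i)$. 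Your proposal never uses Proposition~\ref{P:Crew partial Frob} and instead aims to reduce all the way to external tensor products via Lemma~\ref{L:diagonally unit-root decomposition}(ii). This is where the gap lies, and you yourself flag it: that lemma produces only \emph{successive extensions} of objects of the form $\mathscr{E}_1\boxtimes\cdots\boxtimes\mathscr{E}_n$, and $p_{i,*}$ (and likewise $q_*$) is merely left exact, so the base-change morphism being an isomorphism on the graded pieces of a filtration does not propagate across extensions. Indeed $p_{i,*}$ is not exact in general (consider a non-split extension $0\to\mathbf{1}\to\mathscr{E}\to p_i^*\mathscr{F}\to 0$ with $\mathscr{F}$ nontrivial), so a five-lemma-style d\'evissage on the slope filtration does not go through. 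Your explicit K\"unneth computation $p_{i,*}(\mathscr{E}_1\boxtimes\cdots\boxtimes\mathscr{E}_n)\cong\mathscr{E}_i\otimes_K\bigotimes_{j\neq i}\Hom(\mathbf{1},\mathscr{E}_j)$ is correct and illuminating in the external-product case, but it does not close the gap. The clean fix is to adopt the paper's route: once you are in the diagonally unit-root setting, pass through Proposition~\ref{P:Crew partial Frob} and work with invariants of fundamental-group representations, where base change is a one-line observation.
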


\begin{proof}
	(i) Let $\mathscr{E}$ be an object of $\PhiIsoc(X)$. By Proposition \ref{p:pullback subquotients}, the collection of subobjects of $\mathscr{E}$ which belong to the essential image of $p^*_i$ has a maximal element $\mathscr{F}$. 
	Then we define $p_{i,*}(\mathscr{E})$ to be the object of $\FIsoc(X_i)$ of which $\mathscr{F}$ is a pullback. 

	Let $f:\mathscr{E}_1\to \mathscr{E}_2$ be a morphism of $\PhiIsoc(X)$. 
	Let $\mathscr{G}$ be the image of $p_i^*(p_{i,*}(\mathscr{E}_1))$ in $\mathscr{E}_2$. 
	By Proposition \ref{p:pullback subquotients}, $\mathscr{G}$ belongs to the essential image of $p_i^*$ and the functoriality of $p_{i,*}$ follows. 
	The second assertion is clear. 

	(ii) By Lemma \ref{P:fully faithful}, we may assume that the diagonal Newton polygon of $\mathscr{E}$ is constant on $|X|$. 
	By Theorem \ref{t:slope filtration}, we may moreover assume that $\mathscr{E}$ is diagonally unit-root after twisting the $i$-th partial Frobenius structure. 
	Then $p_{i,*}(\mathscr{E})$ is unit-root and $p_i^*(p_{i,*}(\mathscr{E}))$ lies in the unit-root component $\mathscr{E}_{(0,\ldots,0)}$ of $\mathscr{E}$ (cf. Lemma \ref{L:diagonally unit-root decomposition}). 
	Moreover, the unit-root $F$-isocrystal $p_{i,*}(\mathscr{E})$ over $X_i$ can be identified with the $\prod_{j\neq i}\pi_1(X_j,\overline{x})$-invariant part of the $\pi_1(X,\Phi,\overline{x})$-representation associated to $\mathscr{E}_{(0,\ldots,0)}$ via Proposition \ref{P:Crew partial Frob}. 
	The base change property follows from this identification in view of Theorem \ref{t:Drinfeld lemma origin}. 
\end{proof}

\begin{secnumber}
	\textit{Proof of Theorem \ref{t:Drinfeld lemma conv}.}
	The proof is similar to that of \S~\ref{pf main thm}(a). 
	We keep the notation of \textit{loc.cit.} 
	It suffices to show the exactness of the following sequence:
	\[
	\pi_1^{\PhiIsoc}(X',\Phi') \xrightarrow{u^{\circ}}
	\pi_1^{\PhiIsoc}(X,\Phi) \xrightarrow{p_1^{\circ}}
	\pi_1^{\FIsoc}(X_1).
	\]
	We verify the exactness using Theorem \ref{exact criterion}. 

	Condition (i) follows from Proposition \ref{p:pullback subquotients}. 
	Conditions (ii) and (iii.c) can be verified in the same way as in \S~\ref{pf main thm}(a).
	Finally, as in \textit{loc.cit.}, we verify conditions (iii.a) and (iii.b) using functor $p_{i,*}$ and its base change property Proposition \ref{p:pushforward conv}. This finishes the proof. \hfill \qed 
\end{secnumber}

\begin{remark}
	The functor $\PhiIsocd(X)\to \PhiIsoc(X)$ induces a canonical morphism $\pi_1^{\PhiIsoc}(X)\to \pi_1^{\PhiIsocd}(X)$, which fits into the following diagram:
	\[
	\xymatrix{
	\pi_1^{\PhiIsoc}(X) \ar[r]^-{\prod_{i=1}^n p_i^{\circ}}_-{\sim}\ar[d] & \prod_{i=1}^n \pi_1^{\FIsoc}(X_i) \ar[d] \\
	\pi_1^{\PhiIsocd}(X) \ar[r]^-{\prod_{i=1}^n p_i^{\circ}}_-{\sim}& \prod_{i=1}^n \pi_1^{\FIsocd}(X_i)
	}
	\]
	In view of the above diagram, the pushforward functor for convergent $\Phi$-isocrystals is compatible with the one for overconvergent $\Phi$-isocrystals.
\end{remark}

\begin{cor}
	Any object $\mathscr{E}$ of $\Isod(X,\Phi)$ (resp. $\PhiIsocd(X)$, resp. $\PhiIsoc(X)$) is a subobject (or quotient) of an object of the form $\boxtimes_{i=1}^n \mathscr{E}_i$, where each $\mathscr{E}_i$ is an object of $\Isod(X_i)$ (resp. $\FIsocd(X_i)$, resp. $\FIsoc(X)$).
\end{cor}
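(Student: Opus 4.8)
The plan is to deduce everything from the Tannakian equivalences already proved, so that the statement reduces to a standard fact about representations of a product of affine group schemes. By Theorem~\ref{t:main thm} (for $\Isod(X,\Phi)$ and $\PhiIsocd(X)$) and Theorem~\ref{t:Drinfeld lemma conv} (for $\PhiIsoc(X)$), the functors $p_i^*$ induce a tensor equivalence between the category in question and $\Rep_{\bQp}(G_1\times\cdots\times G_n)$, where $G_i$ is the corresponding Tannakian group attached to $X_i$; moreover $p_i^*$ is carried to inflation along the $i$-th projection $G_1\times\cdots\times G_n\to G_i$, so that $\boxtimes_{i=1}^n\mathscr{E}_i=\bigotimes_{i=1}^n p_i^*\mathscr{E}_i$ corresponds to the external tensor product $V_1\boxtimes\cdots\boxtimes V_n$ of the associated representations $V_i$ of $G_i$. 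Hence it suffices to prove: for a product $G=G_1\times\cdots\times G_n$ of affine group schemes over a field $E$ of characteristic zero, every $W\in\Rep_E(G)$ is a subobject of some $V_1\boxtimes\cdots\boxtimes V_n$ with $V_i\in\Rep_E(G_i)$. The variant with ``quotient'' then follows by applying this to $W^\vee$ and dualizing, using $(V_1\boxtimes\cdots\boxtimes V_n)^\vee\simeq V_1^\vee\boxtimes\cdots\boxtimes V_n^\vee$.

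For the group-theoretic assertion, I would consider the coaction $\rho\colon W\to W\otimes\mathcal{O}(G)$; it is injective and $G$-equivariant when $G$ acts on $\mathcal{O}(G)$ by right translation and trivially on the $W$-factor. Write $\mathcal{O}(G)=\mathcal{O}(G_1)\otimes\cdots\otimes\mathcal{O}(G_n)$. Since $W$ is finite-dimensional, $\rho(W)$ lies in $W\otimes C_0$ for some finite-dimensional subspace $C_0\subseteq\mathcal{O}(G)$; choosing finite-dimensional subspaces $U_i\subseteq\mathcal{O}(G_i)$ with $C_0\subseteq U_1\otimes\cdots\otimes U_n$ and enlarging each $U_i$ to a finite-dimensional subcoalgebra $C_i\subseteq\mathcal{O}(G_i)$ by the fundamental theorem of coalgebras, one obtains $\rho(W)\subseteq W\otimes(C_1\otimes\cdots\otimes C_n)$. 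Each $C_i$, with the right-translation action, is a finite-dimensional subrepresentation of the regular representation of $G_i$, and $C_1\otimes\cdots\otimes C_n$ is the external tensor product $C_1\boxtimes\cdots\boxtimes C_n$ as a representation of $G$. Thus $\rho$ exhibits $W$ as a subrepresentation of $W_0\otimes(C_1\boxtimes\cdots\boxtimes C_n)$, where $W_0$ denotes the underlying vector space of $W$ with trivial $G$-action. Absorbing the trivial factor $W_0$ into the first tensorand identifies this with $C_1^{\oplus\dim W}\boxtimes C_2\boxtimes\cdots\boxtimes C_n$, an external tensor product of objects of the $\Rep_E(G_i)$. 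Transporting back through the Tannakian equivalence yields the claimed embedding, with $\mathscr{E}_1$ corresponding to $C_1^{\oplus\dim W}$ and $\mathscr{E}_i$ to $C_i$ for $i\geq 2$.

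This argument is essentially formal, and I do not expect a serious obstacle. The one place that deserves a moment of care is the first step: identifying the external tensor product of isocrystals with the external tensor product of representations under the Tannakian dictionary, together with the observation that a finite-dimensional subcoalgebra of $\mathcal{O}(G_1)\otimes\cdots\otimes\mathcal{O}(G_n)$ may be enlarged to one of product form $C_1\otimes\cdots\otimes C_n$; both are routine. Everything else is bookkeeping within the Tannakian formalism already set up in \S\ref{ss:Tannakian-gps} and \S\ref{s:conv case}.
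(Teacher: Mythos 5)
Your proposal is correct, but it follows a genuinely different route from the paper. The paper also reduces to the corresponding statement about representations of $G=G_1\times\cdots\times G_n$ (after reducing to $n=2$ by induction), but then proceeds by a duality/coevaluation trick rather than via the coaction and the fundamental theorem of coalgebras: it equips $V\otimes V^{\vee}\otimes V$ with the twisted $G_1\times G_2$-action in which $G_1$ acts only on the first factor and $G_2$ acts diagonally, observes that $(V\otimes V^{\vee})^{G_2}\otimes V$ is then an external tensor product of a $G_1$-representation with a $G_2$-representation, and shows that the equivariant map $(V\otimes V^{\vee})^{G_2}\otimes V\hookrightarrow V\otimes V^{\vee}\otimes V\xrightarrow{\id_V\otimes\ev}V$ is surjective because precomposing with $\coev\otimes\id_V$ (which factors through $(V\otimes V^\vee)^{G_2}\otimes V$ by $G_2$-invariance of the coevaluation tensor) yields the identity on $V$. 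So the paper produces a quotient directly, whereas you produce a subobject directly and dualize; your argument via the coaction $W\hookrightarrow W_0\otimes\mathcal{O}(G)$ and enlargement of a finite-dimensional coefficient subspace to a product of finite-dimensional subcoalgebras is entirely formal, works for all $n$ at once without induction, and is a reasonable alternative. What the paper's argument buys is an explicit and small witness for the quotient, namely $(V\otimes V^\vee)^{G_2}\boxtimes V|_{G_2}$, at the modest cost of keeping track of the nonstandard $G$-action on $V\otimes V^\vee\otimes V$; your argument buys generality and conceptual transparency, at the cost of invoking the fundamental theorem of coalgebras and an absorption step $W_0\otimes C_1\cong C_1^{\oplus\dim W}$.
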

\begin{proof}
	It suffices to show the assertion about the quotient. By induction, we can reduce to the case where $n=2$. 
	We set $G:=\pi_1^{\Isod}(X,\Phi), G_i:=\pi_1^{\Isocd}(X_i)$.
	Let $V$ be a representation of $G\simeq G_1\times G_2$ corresponding to $\mathscr{E}$. We have morphisms of $G_1\times G_2$-representations:
	\[
	(V\otimes V^{\vee})^{G_2}\otimes V \hookrightarrow V\otimes V^{\vee}\otimes V \xrightarrow{\id_V\otimes \ev} V,
	\]
	where $G_1$ only acts at the first component of $V\otimes V^{\vee}\otimes V$ and $G_2$ acts diagonally. 
	By composing with $V\xrightarrow{\coev\otimes \id} V\otimes V^{\vee}\otimes V$, we see that the above composition is surjective. This finishes the proof. 	
\end{proof}

\section{A variant for $\widehat{\Pi}^{\mot}$} \label{s:motivic}

In \cite{Dri18}, Drinfeld considered the $\ell$-adic pro-semisimple completion of the \'etale fundamental group and showed this object is independent of $\ell$ up to conjugation by elements of the neutral component. 
In this section, we incorporate Drinfeld's lemma in that setting.

	Throughout \S\ref{s:motivic}, we assume $(k,\sigma)=(\mathbb{F}_q,\id_{\mathscr{O}_K})$, let $X_i$ be smooth geometrically connected $k$-varieties for $i=1,\ldots,n$ and set $X:=\prod_{i=1}^n X_i$. 

	\subsection{On semisimplicity of the monodromy group}
In this subsection, we discuss the semisimplicity of the monodromy group for an overconvergent $\Phi$-isocrystal following \cite{WeilII,Cr92}. 

An \textit{$n$-twist} is a $\Phi$-isocrystal on $\Spec(k)/K$ of rank 1.
Let $\chi:=(\chi_i)_{i=1}^n$ be an $n$-twist and $\mathscr{E}$ an overconvergent $\Phi$-isocrystal. 
We denote by $\mathscr{E}(\chi)$ the tensor product $\mathscr{E}\otimes f^*(\chi)$, where $f:X\to \Spec(k)$ is the structure morphism. 
We first prove a result in the rank one case, generalizing a result of Abe \cite[Lemma 6.1]{Abe15}.

\begin{lemma} \label{p:rankone}
	Let $\mathscr{E}$ be an overconvergent $\Phi$-isocrystal of rank one over $X$. 
	Then there exists an $n$-twist $\chi$ such that $\mathscr{E}(\chi)$ has finite order. 
\end{lemma}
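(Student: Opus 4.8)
The plan is to use the Tannakian form of Drinfeld's lemma (Theorem~\ref{t:main thm}) to reduce to the rank one case for overconvergent $F$-isocrystals on the factors $X_i$ --- the statement of Abe being generalized --- and then to assemble the required $n$-twist out of the constant twists produced by Abe's result.

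First I would show that $\mathscr{E}$ is an external tensor product. Being of rank one, $\mathscr{E}$ corresponds to a character $\chi\colon \pi_1^{\PhiIsocd}(X)\to \mathbb{G}_m$. By Theorem~\ref{t:main thm} the homomorphism $\prod_i p_i^{\circ}$ identifies $\pi_1^{\PhiIsocd}(X)$ with $\prod_{i=1}^n \pi_1^{\FIsocd}(X_i)$; since a character of a finite direct product of affine group schemes is the product of its restrictions to the factors, $\chi$ factors as $\prod_{i=1}^n (\chi_i\circ p_i^{\circ})$ for characters $\chi_i$ of $\pi_1^{\FIsocd}(X_i)$. Each $\chi_i$ corresponds to a rank one object $\mathscr{L}_i\in \FIsocd(X_i)$, and comparing the associated characters of $\pi_1^{\PhiIsocd}(X)$ yields a canonical isomorphism $\mathscr{E}\simeq \bigotimes_{i=1}^n p_i^{*}(\mathscr{L}_i)$ in $\PhiIsocd(X)$.

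Next I would invoke the rank one case. By Abe \cite[Lemma 6.1]{Abe15}, for each $i$ there is a rank one $F$-isocrystal $\chi_i'$ on $\Spec(k)$ --- concretely $\alpha_i^{\deg}$ for some $\alpha_i\in \bQp^{\times}$ --- such that $\mathscr{L}_i\otimes f_i^{*}(\chi_i')$ has finite order, where $f_i\colon X_i\to \Spec(k)$ is the structure morphism. (Abe's statement is for curves; for a general smooth geometrically connected $X_i$ one reduces to that case by restricting to a suitable smooth curve through a closed point and using that finiteness of order can be tested after such restrictions.) Now let $\chi$ be the $n$-twist on $\Spec(k)$ whose $i$-th partial Frobenius structure is multiplication by $\alpha_i$. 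Unwinding the definition of $p_i^{*}$, one has $f^{*}(\chi)\simeq \bigotimes_{i=1}^n p_i^{*}f_i^{*}(\chi_i')$, whence
\[
\mathscr{E}(\chi)=\mathscr{E}\otimes f^{*}(\chi)\simeq \bigotimes_{i=1}^n p_i^{*}\bigl(\mathscr{L}_i\otimes f_i^{*}(\chi_i')\bigr).
\]
Each factor is the pullback of a finite-order rank one object, hence has finite order, and a tensor product of finitely many finite-order rank one objects has finite order; so $\mathscr{E}(\chi)$ has finite order.

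The substantive input here is Abe's finiteness result for rank one overconvergent $F$-isocrystals; the rest is bookkeeping, so I do not expect a genuine obstacle --- the only points needing care are the (standard) reduction of Abe's statement to higher-dimensional $X_i$ and the verification that the $n$-twist assembled from the $\alpha_i$ is the correct one. Alternatively, one could bypass Theorem~\ref{t:main thm} and mimic Abe's argument directly, restricting $\mathscr{E}$ along each factor (after fixing closed points on the remaining factors) to produce rank one $F_i$-isocrystals and twisting the $i$-th partial Frobenius by $\alpha_i^{\deg}$; the endgame is identical.
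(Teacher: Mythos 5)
Your approach is genuinely different from the paper's, and at a structural level it is cleaner. You apply the Tannakian Drinfeld lemma (Theorem~\ref{t:main thm}) directly to decompose the character of $\pi_1^{\PhiIsocd}(X)\simeq\prod_i\pi_1^{\FIsocd}(X_i)$ associated to $\mathscr{E}$ and deduce $\mathscr{E}\simeq\bigotimes_i p_i^*(\mathscr{L}_i)$, so that the factors $\mathscr{L}_i$ come out directly as \emph{overconvergent} rank-one $F$-isocrystals; you then only need Abe's constant-twist result on each $X_i$. The paper instead first twists $\mathscr{E}$ to a diagonally unit-root object, passes through Crew's correspondence (Proposition~\ref{P:Crew partial Frob}) and the \'etale form of Drinfeld's lemma (Theorem~\ref{t:Drinfeld lemma origin}) to get characters $\rho_i$ of $\pi_1(X_i,\overline{x})$, which a priori yield only \emph{convergent} unit-root $F$-isocrystals $\mathscr{E}_i$; it must then show geometric finiteness of $\mathscr{E}^{\otimes N}$ (via Shiho's purity theorem, Katz--Lang finiteness, and Crew's argument) in order to conclude the $\mathscr{E}_i$ are overconvergent before applying Abe's twist. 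Your route trades that detour for a direct use of the heavier Theorem~\ref{t:main thm}, which is already available at this point; the bookkeeping with the $n$-twist $\chi$ is correct as you have set it up.

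There is, however, a genuine gap in the step where you invoke Abe's \cite[Lemma 6.1]{Abe15} for $X_i$ of dimension $>1$. Your parenthetical claim that ``finiteness of order can be tested after restriction to a suitable smooth curve'' is not automatic: a rank-one overconvergent $F$-isocrystal can very well restrict to something of finite order on a given curve without being of finite order globally. What one needs is a Lefschetz-type statement for rank-one overconvergent $F$-isocrystals (so that the restriction map on the relevant abelianized Tannakian groups is surjective for a sufficiently ample curve), or equivalently the Katz--Lang finiteness theorem that the paper invokes explicitly. Either cite such a result (e.g.\ \cite{AE16} supplies a Lefschetz theorem in this setting, and \cite[Theorem~2]{KL} supplies the geometric class-field-theoretic finiteness) or reproduce the geometric-triviality argument of the paper; without this, the reduction to curves is unjustified and the proof has a hole exactly where the real content lies.
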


\begin{proof}
	After twisting, we may assume $\mathscr{E}$ is unit-root.
	Let $\rho$ be the character of $\pi_1(X,\Phi,\overline{x})$ associated to $\mathscr{E}$ (Proposition \ref{P:Crew partial Frob}) and $\rho_i$ the restriction of $\rho$ to $\pi_1(X_i,\overline{x})$ (Theorem \ref{t:Drinfeld lemma origin}).  
	We may shrink each $X_i$ and in particular, we may assume that $X_i$ admits a smooth compactification $X_i\to \overline{X}_i$ such that the complement is a simple normal crossing divisor. 
	Using \cite[Theorem 4.3]{Shi11}, \cite[Theorem 2]{KL} and the same argument of \cite[Corollary 4.13]{crew-f}, one can show that some power $\mathscr{E}^{\otimes N}$ is geometrically trivial, i.e. its underlying overconvergent isocrystal is trivial. 
	Then $\rho^{\otimes N}$ factors through the quotient $\pi_1(X,\Phi,\overline{x})\to \widehat{\mathbb{Z}}^n$ and each $\rho_i^{\otimes N}$ is also geometrically trivial.
	Hence the unit-root convergent $F$-isocrystal $\mathscr{E}_i$ over $X_i$ associated to $\rho_i$ is overconvergent. 
	We may take suitable twists $\chi_i$ such that each $\mathscr{E}_i(\chi_i)$ has finite order \cite[Lemma 6.1]{Abe15}. Then the assertion follows.  
\end{proof}

This allows us to conclude the following corollary.
\begin{cor}
	Let $\mathscr{E}$ be an overconvergent $\Phi$-isocrystal over $X$. 
	After taking a finite extension of $K$, there exist an integer $m\ge 1$, $n$-twists $\chi_i$ for $1\le i\le m$, and a decomposition of the semisimplification of $\mathscr{E}$:
	\[
	\mathscr{E}^{\semis}\simeq \bigoplus_{i=1}^m \mathscr{F}_i(\chi_i),
	\]
	where $\mathscr{F}_i$ is an irreducible overconvergent $\Phi$-isocrystal of finite order determinant for each $i$. 
\end{cor}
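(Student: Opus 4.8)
The plan is to reduce to a single irreducible object via the semisimplification, and then to force the finite-order determinant condition by tensoring with an appropriate $n$-twist, whose existence comes from Lemma~\ref{p:rankone} after an extension of $K$ allowing one to extract a root.

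First I would pass to $\mathscr{E}^{\semis}$. Since $\PhiIsocd(X/K)$ is Tannakian, hence abelian with objects of finite length, $\mathscr{E}^{\semis}$ is a finite direct sum of irreducible overconvergent $\Phi$-isocrystals; after replacing $K$ by a finite extension I may assume each summand $\mathscr{G}$ is absolutely irreducible, i.e.\ remains irreducible after any further extension of scalars. It then suffices to produce, for one such $\mathscr{G}$ of rank $r$, a further finite extension $K'/K$, an $n$-twist $\chi$ over $K'$, and an (absolutely) irreducible $\mathscr{F}$ of finite-order determinant with $\mathscr{G}\otimes_K K'\simeq\mathscr{F}(\chi)$; the general case follows by enlarging $K'$ once to handle all summands simultaneously and relabeling.

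For such a $\mathscr{G}$, I would apply Lemma~\ref{p:rankone} to the rank-one object $\det\mathscr{G}=\wedge^{r}\mathscr{G}$: there is an $n$-twist $\eta$ with $\det\mathscr{G}\otimes f^{*}\eta$ of finite order. Now I extract an $r$-th root of $\eta^{-1}$. In the present situation $(k,\sigma)=(\FF_q,\id_{\mathscr{O}_K})$, a rank-one $\Phi$-isocrystal on $\Spec(k)/K$ is the datum of a $1$-dimensional $K$-vector space together with $n$ commuting automorphisms, i.e.\ a tuple in $(K^{\times})^{n}$, and the tensor product corresponds to coordinatewise multiplication; hence, after a finite extension $K'/K$ adjoining $r$-th roots of the $n$ coordinates of $\eta^{-1}$, there is an $n$-twist $\chi$ over $K'$ with $\chi^{\otimes r}\simeq\eta^{-1}\otimes_K K'$. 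Setting $\mathscr{F}:=(\mathscr{G}\otimes_K K')(\chi^{-1})$, tensoring by the invertible object $f^{*}\chi^{-1}$ is an autoequivalence, so $\mathscr{F}$ is again absolutely irreducible, while $\det\mathscr{F}\simeq\det(\mathscr{G}\otimes_K K')\otimes f^{*}\chi^{-r}\simeq(\det\mathscr{G}\otimes f^{*}\eta)\otimes_K K'$ is of finite order; and $\mathscr{G}\otimes_K K'\simeq\mathscr{F}(\chi)$, as required.

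The argument is essentially formal once Lemma~\ref{p:rankone} is in hand. The two points deserving attention are: the reduction to absolutely irreducible constituents, which is needed so that the direct sum decomposition of $\mathscr{E}^{\semis}$ survives the later extension of $K$ used to extract roots (and so that one twists the final irreducible pieces rather than $\mathscr{E}$ itself); and the identification of $n$-twists over $K$ with tuples of units, which makes the root extraction transparent and shows that only a finite extension of $K$ is required. The one genuinely nontrivial ingredient --- that every rank-one overconvergent $\Phi$-isocrystal becomes of finite order after an $n$-twist --- is precisely Lemma~\ref{p:rankone}, itself resting on Drinfeld's lemma (Theorem~\ref{t:Drinfeld lemma origin}) and Abe's rank-one result.
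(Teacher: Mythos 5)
Your proof is correct and is exactly the standard untwisting argument the paper is gesturing at with ``This allows us to conclude'': decompose $\mathscr{E}^{\semis}$ into (absolutely) irreducible constituents after a finite extension, apply Lemma~\ref{p:rankone} to each determinant, and extract an $r$-th root of the resulting twist. The identification of $n$-twists with tuples in $(K^{\times})^{n}$ and the passage to absolutely irreducible summands (needed so that the further extension used for root extraction does not disturb the decomposition) are both handled properly.
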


\begin{defn}
We say $\mathscr{E}$ is \textit{untwisted} if we can choose each $\chi_i$ to be the trivial $\Phi$-isocrystal on $\Spec(k)$ in the above corollary. 
\end{defn}
\begin{secnumber}
	Let $\mathscr{E}$ be an object of $\PhiIsocd(X)$. 
	We denote by $G_{\geo}(\mathscr{E})$ (resp. $G_{\arith}(\mathscr{E})$) its geometric (resp. arithmetic) monodromy group over $\bQp$, which is defined by the Tannakian full subcategory of $\PhiIsocd(X)$ (resp. $\Isocd(X)$) whose objects are subquotients of $\mathscr{E}^{\otimes m}\otimes \mathscr{E}^{\vee, \otimes n}$ for some $m,n$.  
	Recall that the radical of $G_{\geo}(\mathscr{E})$ is unipotent  \cite[Theorem 4.9]{Cr92}. 

	Given a field $E$, by a \textit{semisimple} group (resp. \textit{reductive} group) over $E$, we mean an algebraic group of finite type over $E$ whose neutral component is semisimple (resp. reductive). 
	
	By a similar argument to \cite[Theorem 3.4.7]{DZ20}, we conclude the following result for $\Phi$-isocrystals. 
\end{secnumber}

\begin{prop} \label{p:radical unipotent}
	The following properties are equivalent:

	\begin{itemize}
		\item[(i)] The neutral component of $G_{\arith}(\mathscr{E})/G_{\geo}(\mathscr{E})$ is unipotent. 

		\item[(ii)] The radical of $G_{\arith}(\mathscr{E})$ is unipotent. 

		\item[(iii)] The object $\mathscr{E}$ is untwisted. 
	\end{itemize}
\end{prop}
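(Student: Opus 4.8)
The plan is to adapt the proof of \cite[Theorem~3.4.7]{DZ20} (which treats the case $n=1$), using Lemma~\ref{p:rankone} and the corollary above in place of the rank-one classification and the structure of the semisimplification that enter there. Throughout I write $\langle\mathscr{E}\rangle$ for the Tannakian subcategory generated by $\mathscr{E}$ and $R(-)$ for the radical of an affine group scheme.

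\emph{Reduction to the semisimple case.} Since $\mathscr{E}^{\semis}\in\langle\mathscr{E}\rangle$ and $\langle\mathscr{E}^{\semis}\rangle$ is closed under subquotients in $\langle\mathscr{E}\rangle$, the restriction functors induce faithfully flat homomorphisms $G_{\geo}(\mathscr{E})\twoheadrightarrow G_{\geo}(\mathscr{E}^{\semis})$ and $G_{\arith}(\mathscr{E})\twoheadrightarrow G_{\arith}(\mathscr{E}^{\semis})$ with unipotent kernels (the monodromy of a successive extension over its semisimplification). One checks that each of (i), (ii), (iii) holds for $\mathscr{E}$ if and only if it holds for $\mathscr{E}^{\semis}$: for (iii) this is the definition; for (ii) because an extension of a group with unipotent radical by a unipotent group again has unipotent radical, and conversely; and for (i) because the inclusion $G_{\geo}\subseteq G_{\arith}$ is compatible with the two surjections, so $G_{\arith}(\mathscr{E})/G_{\geo}(\mathscr{E})\to G_{\arith}(\mathscr{E}^{\semis})/G_{\geo}(\mathscr{E}^{\semis})$ is surjective with unipotent kernel. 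Hence I may assume $\mathscr{E}=\mathscr{E}^{\semis}=\bigoplus_{i=1}^m\mathscr{F}_i(\chi_i)$ as in the corollary. The structural input used repeatedly is that $Q:=G_{\arith}(\mathscr{E})/G_{\geo}(\mathscr{E})$ is \emph{commutative}: it is the Tannakian group of the subcategory of geometrically constant objects of $\langle\mathscr{E}\rangle$, hence a quotient of $\pi_1(\mathscr{C}_{0,\cst})$, which by Proposition~\ref{ess of pione}(ii) is the pro-algebraic completion of $\mathbb{Z}^n$; in particular $R(Q)=Q^{\circ}$, a product of a torus and a unipotent group.

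\emph{The equivalence \textnormal{(i)}$\Leftrightarrow$\textnormal{(ii)}} is then purely group-theoretic, the inputs being that $R(G_{\geo}(\mathscr{E}))$ is unipotent \cite[Theorem~4.9]{Cr92} and that $Q$ is commutative. For (ii)$\Rightarrow$(i): the image $\bar R$ of $R:=R(G_{\arith}(\mathscr{E}))$ in $Q$ is connected, normal and solvable, hence $\bar R\subseteq Q^{\circ}$, while $G_{\arith}(\mathscr{E})/(R\cdot G_{\geo}(\mathscr{E}))$ is simultaneously a quotient of the semisimple group $G_{\arith}(\mathscr{E})/R$ and of the commutative group $Q$, hence finite, so $Q^{\circ}=\bar R$; if $R$ is unipotent so is $Q^{\circ}$. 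For (i)$\Rightarrow$(ii): a maximal torus $S$ of $R$ maps to a torus of the unipotent group $Q^{\circ}$, hence trivially, so $S\subseteq G_{\geo}(\mathscr{E})$; being connected, $S$ lies in $(R\cap G_{\geo}(\mathscr{E}))^{\circ}\subseteq R(G_{\geo}(\mathscr{E}))$, which is unipotent, so $S=1$ and the connected solvable group $R$ is unipotent.

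\emph{The equivalences with \textnormal{(iii)}.} For (ii)$\Rightarrow$(iii) I argue by contradiction: if some $\chi_{i_0}$ has infinite order, then $\mathscr{L}:=\det(\mathscr{F}_{i_0}(\chi_{i_0}))=\det(\mathscr{F}_{i_0})\otimes\chi_{i_0}^{\otimes\dim\mathscr{F}_{i_0}}$ is a rank-one object of $\langle\mathscr{E}\rangle$ of infinite order whose underlying overconvergent isocrystal has finite order, so the associated character $G_{\arith}(\mathscr{E})\to\mathbb{G}_m$ has infinite (hence Zariski-dense, hence full) image but finite order on $G_{\geo}(\mathscr{E})$; then $G_{\arith}(\mathscr{E})^{\circ}$ surjects onto $\mathbb{G}_m$, impossible when $R(G_{\arith}(\mathscr{E}))$ is unipotent since then $G_{\arith}(\mathscr{E})^{\circ}$ is an extension of a semisimple group by a unipotent one. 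Thus every $\chi_i$ is of finite order, and absorbing $\chi_i$ into $\mathscr{F}_i$ (which stays irreducible with finite-order determinant) shows $\mathscr{E}$ is untwisted. For (iii)$\Rightarrow$(ii), assume $\mathscr{E}=\bigoplus\mathscr{F}_i$ with each $\mathscr{F}_i$ irreducible of finite-order determinant. Using Theorem~\ref{t:main thm} to decompose $\mathscr{F}_i$ as an external tensor product of irreducible overconvergent $F$-isocrystals on the $X_j$, each of finite-order determinant, and invoking Abe's purity results together with ``pure implies semisimple'' for overconvergent $F$-isocrystals \cite{abe-companion}, every object of $\langle\mathscr{E}\rangle$ is semisimple, so $G_{\arith}(\mathscr{E})$ is reductive with radical its connected central torus $Z^{\circ}$. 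This torus acts on each irreducible $\omega(\mathscr{F}_i)$ through a character $\psi_i$ with $\psi_i^{\dim\mathscr{F}_i}=\det(\mathscr{F}_i)|_{Z^{\circ}}$; as $\det\mathscr{F}_i$ has finite order, $\psi_i^{\dim\mathscr{F}_i}$, hence $\psi_i$, has finite—hence trivial—image on the torus $Z^{\circ}$, so $Z^{\circ}\subseteq\bigcap_i\ker\psi_i$, which acts trivially on $\omega(\mathscr{E})$ and is therefore trivial. Thus $R(G_{\arith}(\mathscr{E}))=1$.

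\emph{Main obstacle.} The step I expect to cost the most is (iii)$\Rightarrow$(ii), which rests on the fact that an irreducible overconvergent $\Phi$-isocrystal with finite-order determinant generates a \emph{semisimple} Tannakian category, i.e.\ has reductive arithmetic monodromy. For $n=1$ this is the content of the purity and semisimplicity theorems for overconvergent $F$-isocrystals used in \cite{DZ20}; for general $n$ one must first descend to the case $n=1$, and it is precisely Theorem~\ref{t:main thm}—combined with the compatibility of the monodromy formalism with restriction to closed points of the $X_j$ and with extension of scalars—that makes this reduction possible. A lesser but persistent technical point is the bookkeeping of connected components in the manipulations for (i)$\Leftrightarrow$(ii), which I would handle by systematically passing to neutral components of intersections of subgroups.
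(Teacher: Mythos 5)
Your argument is correct and is the natural adaptation of D'Addezio's proof of \cite[Theorem~3.4.7]{DZ20} to the $\Phi$-isocrystal setting, which is exactly what the paper has in mind (it only cites that result without writing out details). The reduction to the semisimple case, the identification of $Q=G_{\arith}/G_{\geo}$ as a quotient of $\pi_1(\mathscr{C}_{0,\cst})$ and hence commutative, the purely group-theoretic proof of (i)$\Leftrightarrow$(ii) using Crew's theorem, and the determinant-character computation in (ii)$\Rightarrow$(iii) and (iii)$\Rightarrow$(ii) are all what the paper intends.

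One remark on your step (iii)$\Rightarrow$(ii), which you flag as the expensive one: the detour through Abe's purity and ``pure implies semisimple'' can be avoided. After the reduction you already have $\mathscr{E}=\mathscr{E}^{\semis}$, so $\omega(\mathscr{E})$ is a faithful semisimple representation of $G_{\arith}(\mathscr{E})$; by Clifford theory it is semisimple as a representation of $G_{\arith}(\mathscr{E})^{\circ}$, and then the unipotent radical fixes a nonzero $G^{\circ}$-stable subspace of each irreducible summand (Kolchin), hence acts trivially, hence is trivial by faithfulness. This gives reductivity of $G_{\arith}(\mathscr{E})$ directly, with no input from the companion or weight machinery; the only place where the specific structure of the category enters is the final computation that the finite-order-determinant hypothesis kills the central torus, which you do correctly. (Invoking purity is not wrong, but the phrase ``pure implies semisimple'' needs care: purity gives geometric semisimplicity, not arithmetic semisimplicity, so the more elementary argument is also the safer one.)
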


\begin{cor} \label{c:semisimple monodromy}
	Let $\mathscr{E}$ be a semisimple object of $\PhiIsocd(X)$.

	\textnormal{(i)} The geometric monodromy group $G_{\geo}(\mathscr{E})$ is semisimple. 

	\textnormal{(ii)} If the determinant of each irreducible component of $\mathscr{E}$ has finite order, then $G_{\arith}(\mathscr{E})$ is semisimple. 	
\end{cor}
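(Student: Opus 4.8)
The plan is to deduce both parts from three facts already available: that the radical of $G_{\geo}(\mathscr{E})$ is unipotent (recalled before the statement, see \cite[Theorem~4.9]{Cr92}); that, by Proposition~\ref{p:radical unipotent}, $\mathscr{E}$ is untwisted if and only if the radical of $G_{\arith}(\mathscr{E})$ is unipotent; and the elementary observation that an affine group scheme of finite type over $\bQp$ admitting a faithful \emph{semisimple} representation is reductive. This last point is a form of Kolchin's theorem: the unipotent radical $U$ of the neutral component is connected, normal and unipotent, hence fixes a nonzero vector in each irreducible summand of the module; by normality that fixed subspace is stable under the whole group, so it is the entire summand, whence $U$ acts trivially and, by faithfulness, $U = 1$.

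For (i): by construction $G_{\geo}(\mathscr{E})$ is the Tannakian group of the full subcategory of $\PhiIsocd(X)$ generated by the subquotients of the $\mathscr{E}^{\otimes m}\otimes\mathscr{E}^{\vee,\otimes n}$, so $\mathscr{E}$ is a faithful representation of it, and that representation is semisimple because $\mathscr{E}$ is semisimple in $\PhiIsocd(X)$ and this subcategory is closed under subquotients. Hence $G_{\geo}(\mathscr{E})$ is reductive, i.e. its unipotent radical is trivial; since its radical is unipotent (and therefore coincides with the unipotent radical), the radical is trivial as well, so $G_{\geo}(\mathscr{E})$ is semisimple.

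For (ii): the hypothesis that each irreducible constituent of the semisimple object $\mathscr{E}=\mathscr{E}^{\semis}$ has finite-order determinant means exactly that in the decomposition of the corollary preceding the statement one may take the $\mathscr{F}_i$ to be those constituents and every twist $\chi_i$ trivial; that is, $\mathscr{E}$ is untwisted (the finite extension of $K$ permitted there is harmless, since finite order is preserved under extension of scalars). By Proposition~\ref{p:radical unipotent} the radical of $G_{\arith}(\mathscr{E})$ is then unipotent, so it remains to show $G_{\arith}(\mathscr{E})$ is reductive. For this I would use that $G_{\arith}(\mathscr{E})$ — the monodromy group of the underlying overconvergent isocrystal of $\mathscr{E}$ — is a closed \emph{normal} subgroup of $G_{\geo}(\mathscr{E})$: since $G_{\geo}(\mathscr{E})$ is reductive by (i), the unipotent radical of $G_{\arith}(\mathscr{E})^{\circ}$, being characteristic in it and hence normal in $G_{\geo}(\mathscr{E})$, must be trivial, so $G_{\arith}(\mathscr{E})^{\circ}$ is reductive. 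Combining this with the unipotence of the radical exactly as in (i) gives that $G_{\arith}(\mathscr{E})$ is semisimple.

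The step I expect to need the most care is the normality of $G_{\arith}(\mathscr{E})$ in $G_{\geo}(\mathscr{E})$: it is standard in this context, proved via the Frobenius-twist argument, but should be made explicit. One could instead prove directly, using Clifford's theorem, that the underlying overconvergent isocrystal of a semisimple $\Phi$-isocrystal is again semisimple, and then rerun the argument of (i) with $G_{\arith}(\mathscr{E})$ in place of $G_{\geo}(\mathscr{E})$; that route relies on the same normality input together with the fact that $\mathscr{E}$ is a faithful $G_{\arith}(\mathscr{E})$-module. One should also confirm that the finite-extension-of-$K$ bookkeeping in the untwistedness reduction is innocuous, i.e. that finite order of the determinant persists when passing to the absolutely irreducible constituents over the larger field.
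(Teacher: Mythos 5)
The paper's definition of $G_{\geo}$ and $G_{\arith}$ in the paragraph preceding Proposition~\ref{p:radical unipotent} has the two labels swapped relative to how they are used everywhere else: for the rest of \S 4 to parse, one must read $G_{\arith}(\mathscr{E})$ as the Tannakian group of $\langle\mathscr{E}\rangle\subset\PhiIsocd(X)$ and $G_{\geo}(\mathscr{E})$ as that of $\langle\mathscr{E}\rangle\subset\Isocd(X)$. That is the only reading under which $G_{\geo}$ sits as a closed normal subgroup of $G_{\arith}$ (so that the quotient in Proposition~\ref{p:radical unipotent}(i) is defined), and the only one making the sentence ``the radical of $G_{\geo}(\mathscr{E})$ is unipotent'' unconditionally true: Crew's \cite[Theorem~4.9]{Cr92} concerns the Frobenius-free category, while for a nontrivial rank-one constant twist $\mathscr{E}$ the Tannakian group of $\langle\mathscr{E}\rangle\subset\PhiIsocd(X)$ already has $\mathbb{G}_m$ as its radical. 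You took the printed definition at face value, and as a result your two halves attach the unipotence facts to the wrong groups.

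Once the labels are corrected, your ingredients are exactly those of the paper's intended proof, but assigned to the wrong numerals. The ``faithful semisimple representation implies reductive'' observation applies directly to $G_{\arith}$, the Tannakian group of $\langle\mathscr{E}\rangle\subset\PhiIsocd(X)$, since $\mathscr{E}$ is by hypothesis semisimple there; combined with Proposition~\ref{p:radical unipotent} and the untwistedness you extract from the finite-order-determinant hypothesis, this proves part~(ii), not (i). For part~(i) you cannot start from ``$\mathscr{E}$ is a faithful semisimple representation of $G_{\geo}$'': semisimplicity of the underlying object in $\Isocd(X)$ is not a hypothesis, and is precisely what the Clifford/normality step you relegate to (ii), and correctly flag as the delicate point, must supply. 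Normality of $G_{\geo}$ in $G_{\arith}$ follows as in Lemma~\ref{lemma Weil gp}(ii); Clifford's theorem then gives semisimplicity of the underlying isocrystal, so $G_{\geo}$ is reductive, and Crew's unconditional unipotence of its radical upgrades this to semisimple — this is the argument of \cite[Corollaire~1.3.9]{WeilII} the paper invokes. So your plan is sound and every step is in hand; you need to swap which argument goes under which numeral, and keep straight that Crew's theorem and Proposition~\ref{p:radical unipotent} concern the radicals of $G_{\geo}$ and of $G_{\arith}$ respectively, not the other way around.
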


\begin{proof}
	Assertion (i) follows from the fact that the radical of $G_{\geo}(\mathscr{E})$ is unipotent and the argument of \cite[Corollaire 1.3.9]{WeilII}.
	Assertion (ii) can be shown in a similar way using Proposition \ref{p:radical unipotent}. 
\end{proof}

\subsection{Pro-semisimple completion of the fundamental group of a smooth variety with partial Frobenius}

\begin{secnumber}
	Let $E$ be an algebraically closed field. 
	We refer to \cite[\S 2.1.1]{Dri18} for the notion of pro-semisimple groups and pro-reductive groups. 
	Following \cite[\S 1.2.3]{Dri18}, $\Pred(E)$ denotes the groupoid whose objects are pro-reductive groups over $E$ and whose morphisms are as follows: a morphism $G_1\to G_2$ is an isomorphism of group schemes $G_1\xrightarrow{\sim} G_2$ defined up to composing with automorphisms of $G_2$ of the form $x\mapsto gxg^{-1}$, $g\in G_2^{\circ}$. 
	Let $\Pss(E)\subset \Pred(E)$ be the full subcategory formed by pro-semisimple groups. 

For any pro-algebraic group $G$ over $E$, we denote by $G^{\red}$ (resp. $G^{\semis}$) its pro-reductive (resp. pro-semi-simple) quotient. 
\end{secnumber}

\begin{secnumber}
	Fix an algebraic closure $\bQQ$ of $\QQ$. Let $\lambda$ be a non-Archimedean place of $\bQQ$, $\ell$ the prime that $\lambda$ divides $\ell$, $\bQla$ the direct limit of $E_{\lambda}$ for subfields $E\subset \bQQ$ finite over $\QQ$. 
	Let $(\widetilde{X},\Phi)$ (resp. $\widetilde{X}_i$) be the universal cover of the category $\mathcal{C}(X,\Phi)$ \eqref{sss:CXPhi} (resp. the universal \'etale cover of $X_i$) and $\Pi(X,\Phi)$ the automorphism group of $(\widetilde{X},\Phi)/X$, which is isomorphic to $\pi_1(X,\Phi,\overline{x})$ after choosing a base point.  
	When $n=1$, we write $\Pi(X)$ for $\Pi(X,\Phi)$.
	
	Following Drinfeld, we define a pro-semisimple group $\HPi_{\lambda}(X,\Phi)$ over $\bQla$ as follows:
	\begin{itemize}
		\item[(i)] When $\ell=p$, we denote by $\ccT_{\lambda}(X,\Phi)$ the full subcategory of $\PhiIsocd(X)$ of semisimple objects $M$ such that the determinant of each irreducible component of $M$ has finite order. 
	This category is a Tannakian full subcategory of $\PhiIsocd(X)$ and we denote by $\HPi_{\lambda}(X,\Phi)$ its Tannakian group. 
	By Corollary \ref{c:semisimple monodromy}, the quotient $\pi_1^{\PhiIsocd}(X)\to \HPi_{\lambda}(X,\Phi)$, induced by the inclusion functor, identifies with the pro-semisimple quotient of $\pi_1^{\PhiIsocd}(X)$. 

\item[(ii)] When $\ell\neq p$, we define $\HPi_{\ell}(X,\Phi)$ to be the $\ell$-adic pro-semisimple completion of $\Pi(X,\Phi)$ and set $\HPi_{\lambda}(X,\Phi)=\HPi_{\ell}(X,\Phi)\otimes_{\Ql}\bQla$ \cite[\S 1.2.1]{Dri18}. 
	
	There is an equivalent definition: we define a full subcategory $\ccT_{\lambda}(X,\Phi)$ of the category of lisse $\bQla$-sheaves on $X$ with a partial Frobenius structure, formed by semisimple objects $M$ such that the determinant of each irreducible component of $M$ has finite order as in (i). 
	It is a Tannakian category over $\bQla$ and the associated Tannakian group is isomorphic to $\HPi_{\lambda}(X,\Phi)$ (cf. \cite[\S 3.6]{Dri18} in the case $n=1$). 
\end{itemize}

The embedding $\bQQ\to \bQla$ induces an equivalence \cite[Proposition 2.2.5]{Dri18}:
	\begin{equation}
		\Pss(\bQQ)\xrightarrow{\sim} \Pss(\bQla).
		\label{eq:equi alg closed}
	\end{equation}
	We denote by $\HPi_{(\lambda)}(X,\Phi)$ the object of $\Pss(\bQQ)$ associated to $\HPi_{\lambda}(X,\Phi)$ by above equivalence.
	When $n=1$, we omit $\Phi$ from the above notations. 

	By Drinfeld's lemma, there exists a canonical isomorphism over $\bQla$:
	\[
\HPi_{\lambda}(X,\Phi)\xrightarrow{\sim} \prod_{i=1}^n \HPi_{\lambda}(X_i).
	\]
	When $\lambda$ divides $p$, it follows from taking the pro-semisimple quotient of \eqref{Tannakian Drinfeld lemma}; when $\lambda$ does not divide $p$, it follows from \eqref{eq:Drinfeld lemma pi1} and taking the $\ell$-adic pro-semisimple completion. 
	Via the equivalence \eqref{eq:equi alg closed}, we obtain an isomorphism over $\bQQ$: 
	\begin{equation}
		\HPi_{(\lambda)}(X,\Phi)\xrightarrow{\sim} \prod_{i=1}^n \HPi_{(\lambda)}(X_i). 
		\label{eq:HPi prod}
	\end{equation}
\end{secnumber}

\begin{secnumber} \label{sss:tildePi}
	We review some constructions in \cite{Dri18}. 
	For each $i$, let $|\widetilde{X}_i|^\circ$ be the set of closed points of $\widetilde{X}_i$. We have a canonical $\Pi(X_i)$-equivariant map \cite[(1.1)]{Dri18}:
	\[
	|\widetilde{X}_i|^\circ \to \Pi(X_i),\quad \widetilde{x}\mapsto F_{\tilde{x}},
	\]
	where $F_{\tilde{x}}$ denotes the geometric Frobenius automorphism at $\widetilde{x}$.

	The set $\Pi(X_i)$ contains a dense subset $\Pi_{\Fr}(X_i)$ formed by the elements $F_{\tilde{x}}^n$ for $\tilde{x}\in |\widetilde{X}_i|^\circ$ and $n\in \mathbb{Z}_{\ge 0}$. 
	The group $\Pi(X_i)$ acts on $\Pi_{\Fr}(X_i)$ by conjugation. 
	We denote by $\widetilde{\Pi}_{\Fr}(X_i):=\mathbb{Z}_{\ge 0}\times |\widetilde{X}|^\circ$. One has the canonical $\Pi(X_i)$-equivariant surjection:
	\begin{equation} \label{eq:Pi tilde}
	\widetilde{\Pi}_{\Fr}(X_i)\twoheadrightarrow \Pi_{\Fr}(X_i),\quad (n,\tilde{x}) \mapsto F_{\tilde{x}}^n.
\end{equation}
	
	For any pro-reductive group $G$, let $[G]$ denote the GIT quotient of $G$  by the conjugation action of the neutral component $G^{\circ}$. 
	By Theorem \ref{t:Drinfeld lemma origin}, we have an adjoint action of $\Pi(X,\Phi)$ on $[\HPi_{(\lambda)}(X,\Phi)]$ and a canonical $\Pi(X,\Phi)$-equivariant map $[\HPi_{(\lambda)}(X,\Phi)]\twoheadrightarrow \Pi(X,\Phi)$. 

	The map \eqref{eq:Pi tilde} has a canonical lift to a $\Pi(X_i)$-equivariant map (c.f. \cite[\S 7.2.4, 7.3.5]{Dri18})
	\begin{equation}
		\widetilde{\Pi}_{\Fr}(X_i)\to [\HPi_{(\lambda)}(X_i)](\bQla),\quad (n,\tilde{x})\mapsto F_{\tilde{x}}^n. 
		\label{eq:Pitilde Hat}
	\end{equation}
	By the companion theorem \cite{abe-companion, AE16, kedlaya-companionI, kedlaya-companionII}, the above map factors as \cite[Corollary 7.4.2]{Dri18}
	\[
	\widetilde{\Pi}_{\Fr}(X_i)\twoheadrightarrow \Pi_{\Fr}(X_i) \to [\HPi_{(\lambda)}(X_i)](\bQQ) \hookrightarrow [\HPi_{(\lambda)}(X_i)](\bQla).
	\]
	For any non-Archimedean place $\lambda$ of $\bQQ$, we have a diagram of sets:
	\[
	\Pi_{\Fr}(X_i)\to [\HPi_{(\lambda)}(X_i)](\bQQ) \twoheadrightarrow \Pi(X_i).
	\]
	
	Via the isomorphism \eqref{eq:HPi prod}, we obtain a diagram of sets:
	\begin{equation}
		\prod_{i=1}^n \Pi_{\Fr}(X_i) \to [\HPi_{(\lambda)}(X,\Phi)](\bQQ)\twoheadrightarrow \Pi(X,\Phi). 
		\label{eq:Frob seq}
	\end{equation}
	
The result of \cite[Theorem 1.4.1]{Dri18} can be generalized as follow. 
\end{secnumber}

\begin{prop} \label{P:l-independence}
	Let $\lambda, \lambda'$ be non-Archimedean places of $\bQQ$. 
	There exists a unique isomorphism 
	\[
	\HPi_{(\lambda)}(X,\Phi)\xrightarrow{\sim} \HPi_{(\lambda')}(X,\Phi)
	\]
	in the category $\Pss(\bQQ)$ which sends the diagram \eqref{eq:Frob seq} to the corresponding diagram for $\HPi_{(\lambda')}$. 
	\label{t:HPi unique}
\end{prop}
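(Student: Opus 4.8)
The plan is to reduce the statement to Drinfeld's theorem for each individual factor $X_i$, using the product decomposition \eqref{eq:HPi prod}. For a single smooth geometrically connected $k$-variety, \cite[Theorem~1.4.1]{Dri18} furnishes, for any two non-Archimedean places $\lambda,\lambda'$ of $\bQQ$, a unique isomorphism $\HPi_{(\lambda)}(X_i)\xrightarrow{\sim}\HPi_{(\lambda')}(X_i)$ in $\Pss(\bQQ)$ carrying the diagram $\Pi_{\Fr}(X_i)\to[\HPi_{(\lambda)}(X_i)](\bQQ)\twoheadrightarrow\Pi(X_i)$ to its counterpart for $\HPi_{(\lambda')}$. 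Taking the product of these isomorphisms over $i=1,\dots,n$ and transporting it across \eqref{eq:HPi prod} (for $\lambda$ and for $\lambda'$) produces an isomorphism $\HPi_{(\lambda)}(X,\Phi)\xrightarrow{\sim}\HPi_{(\lambda')}(X,\Phi)$ in $\Pss(\bQQ)$. It remains to verify that this isomorphism respects the diagram \eqref{eq:Frob seq}, and that it is the unique morphism in $\Pss(\bQQ)$ with this property.

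For the first point, I would check that each term and arrow of \eqref{eq:Frob seq} is, compatibly with \eqref{eq:HPi prod}, the product over $i$ of the corresponding term and arrow for $X_i$. The GIT quotient of a pro-reductive group by conjugation by its neutral component commutes with finite products, so $[\HPi_{(\lambda)}(X,\Phi)]\cong\prod_{i=1}^n[\HPi_{(\lambda)}(X_i)]$ compatibly with \eqref{eq:HPi prod}; the target $\Pi(X,\Phi)\cong\prod_{i=1}^n\Pi(X_i)$ is the original Drinfeld's lemma (Theorem~\ref{t:Drinfeld lemma origin}), and the surjection $[\HPi_{(\lambda)}(X,\Phi)]\twoheadrightarrow\Pi(X,\Phi)$ is the product of the $X_i$-surjections; finally, the lift $\prod_{i=1}^n\Pi_{\Fr}(X_i)\to[\HPi_{(\lambda)}(X,\Phi)](\bQQ)$ was constructed in \S\ref{sss:tildePi} precisely as the product of the maps \eqref{eq:Pitilde Hat} (together with the factorization through $\Pi_{\Fr}$ supplied by the companion theorem, \cite[Corollary~7.4.2]{Dri18}). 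Since the product isomorphism intertwines these product diagrams term by term, it sends \eqref{eq:Frob seq} for $\lambda$ to \eqref{eq:Frob seq} for $\lambda'$.

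For uniqueness, suppose $\psi$ is another isomorphism in $\Pss(\bQQ)$ respecting \eqref{eq:Frob seq}; composing with the inverse of the isomorphism constructed above yields an automorphism $\alpha$ of $\HPi_{(\lambda)}(X,\Phi)$ preserving \eqref{eq:Frob seq}, and we must show $\alpha$ is conjugation by an element of the neutral component. I would prove this by re-running Drinfeld's uniqueness argument for the ``package'' $\bigl(\prod_{i=1}^n\HPi_{(\lambda)}(X_i),\ \prod_{i=1}^n\Pi(X_i),\ \prod_{i=1}^n\Pi_{\Fr}(X_i)\bigr)$, which satisfies the same formal properties used in \cite[\S~7]{Dri18}: the source is pro-semisimple, the middle is its group of connected components via \eqref{eq:HPi prod} and Theorem~\ref{t:Drinfeld lemma origin}, and the Frobenius set is Zariski-dense in $\prod_i[\HPi_{(\lambda)}(X_i)]$, being a product of dense subsets. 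Concretely, the ``axis'' elements $(1,\dots,1,F_{\tilde{x}}^{n},1,\dots,1)\in\prod_i\Pi_{\Fr}(X_i)$ have images Zariski-dense in the $j$-th factor of $\prod_i[\HPi_{(\lambda)}(X_i)]$, so $\alpha$ --- which fixes them and commutes with the surjection onto $\prod_i\Pi(X_i)$ --- must, after an inner adjustment, preserve each factor $\HPi_{(\lambda)}(X_j)$ and act on it by an automorphism compatible with the single-variable diagram; the uniqueness clause of \cite[Theorem~1.4.1]{Dri18} then forces that automorphism to be inner, whence $\alpha$ is inner. The main obstacle is exactly this last reduction: an abstract automorphism of the group $\prod_i\HPi_{(\lambda)}(X_i)$ need not respect the factors, so one genuinely has to exploit the Frobenius data and the product structure of $\prod_i\Pi(X_i)$ to pin down the decomposition before invoking single-variable uniqueness.
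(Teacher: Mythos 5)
Your reduction to the single-variable case via the product decomposition \eqref{eq:HPi prod} is exactly the route taken in the paper, and your attention to the uniqueness step — noting that an abstract automorphism of $\prod_i\HPi_{(\lambda)}(X_i)$ need not respect the factors until one pins down the decomposition using the Frobenius data and the product structure on $\pi_0$ — is a genuine subtlety that the paper passes over quickly with ``the required properties and the uniqueness follows from the case $n=1$''; your elaboration there is sound and arguably more complete than the paper's.

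However, there is one real gap: you write that \cite[Theorem~1.4.1]{Dri18} furnishes the $n=1$ case ``for any two non-Archimedean places $\lambda,\lambda'$ of $\bQQ$.'' That is not what Drinfeld proved. His Theorem~1.4.1 covers the case where neither $\lambda$ nor $\lambda'$ divides $p$ for smooth varieties of arbitrary dimension; for a place dividing $p$ he only had the curve case (his Theorem~5.2.1), since the crystalline companion theorem in higher dimension was unavailable at the time. The paper explicitly fills this gap by invoking the recent $p$-adic companion theorems of Abe--Esnault and Kedlaya \cite{AE16,kedlaya-companionI,kedlaya-companionII} and re-running Drinfeld's argument. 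Without this input your reduction to $n=1$ does not prove the proposition in the case where $\lambda$ or $\lambda'$ lies over $p$ and some $X_i$ has dimension greater than one — which is precisely the new case of interest in this paper.
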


\begin{proof}
	When $n=1$ (i.e. we don't consider partial Frobenius structures), Drinfeld proved the above result for higher-dimensional smooth geometrically connected $k$-varieties if $\lambda,\lambda'$ do not divide $p$ and the result in the curve case when $\lambda$ or $\lambda'$ divides $p$ \cite[Theorem 1.4.1, 5.2.1]{Dri18}. 
	Now we can obtain the full generality of the theorem in the case $n=1$ using the recent breakthrough in the companion theorem for $p$-adic coefficients \cite{AE16, kedlaya-companionI, kedlaya-companionII} over smooth $k$-varieties and the same argument of \cite{Dri18}. 

	In general, the isomorphism $\prod_{i=1}^n \HPi_{(\lambda)}(X_i)\xrightarrow{\sim} \prod_{i=1}^n \HPi_{(\lambda')}(X_i)$, obtained in the case $n=1$, induces an isomorphism $\HPi_{(\lambda)}(X,\Phi)\xrightarrow{\sim} \HPi_{(\lambda')}(X,\Phi)$, which fits into the following diagram:
	\[
	\xymatrix{
	\HPi_{(\lambda)}(X,\Phi)\ar[r]^-{\sim} \ar[d]_{\sim} & \prod_{i=1}^n \HPi_{(\lambda)}(X_i) \ar[d]^{\sim}\\
		\HPi_{(\lambda')}(X,\Phi) \ar[r]^-{\sim}& \prod_{i=1}^n \HPi_{(\lambda')}(X_i)
	}
	\]
	In view of the definition of \eqref{eq:Frob seq}, the required properties and the uniqueness follows from the case $n=1$.  
\end{proof}

\begin{cor}
	The neutral component $\widehat{\Pi}^{\circ}_{(\lambda)}(X,\Phi)$ is simply connected. 
\end{cor}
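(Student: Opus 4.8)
The plan is to reduce at once to the case $n=1$ and then feed in the $\ell$-independence result just established. First I would apply the isomorphism \eqref{eq:HPi prod} and pass to neutral components, obtaining $\HPi^{\circ}_{(\lambda)}(X,\Phi)\simeq \prod_{i=1}^n \HPi^{\circ}_{(\lambda)}(X_i)$; since simple connectedness of the neutral component is stable under products, it suffices to show that each $\HPi^{\circ}_{(\lambda)}(X_i)$ is simply connected, i.e. to treat the case $n=1$.

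When $\lambda$ does not divide $p$, the group $\HPi_{\lambda}(X_i)$ is by definition the $\ell$-adic pro-semisimple completion of $\Pi(X_i)$ in the sense of \cite[\S1.2.1]{Dri18}; its neutral component is simply connected by construction, and this property is preserved by the equivalence \eqref{eq:equi alg closed} through which $\HPi_{(\lambda)}(X_i)$ is defined. When $\lambda$ divides $p$, I would choose an auxiliary non-Archimedean place $\lambda'$ of $\bQQ$ not dividing $p$ and invoke Proposition \ref{P:l-independence} in the case $n=1$, which furnishes an isomorphism $\HPi_{(\lambda)}(X_i)\xrightarrow{\sim}\HPi_{(\lambda')}(X_i)$ in $\Pss(\bQQ)$. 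Such a morphism is in particular an isomorphism of group schemes (well defined up to an inner automorphism by an element of the neutral component, which in any case preserves simple connectedness), so the conclusion for $\lambda'$, already obtained, transports to $\lambda$.

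The argument is formal once \eqref{eq:HPi prod} and Proposition \ref{P:l-independence} are in hand; the only input I would want to pin down against \cite{Dri18} rather than reprove is precisely the assertion that Drinfeld's $\ell$-adic pro-semisimple completion has simply connected neutral component. With that granted there is no real obstacle, and in particular the case $\lambda\mid p$—where no direct construction of a ``simply connected cover'' on the $p$-adic side is available—is handled entirely by transport of structure along the $\ell$-independence isomorphism.
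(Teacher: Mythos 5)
Your argument is correct and matches the paper's proof in substance: both reduce to $\lambda\nmid p$ (via $\ell$-independence), then combine the product decomposition \eqref{eq:HPi prod} with the result from \cite{Dri18} that the $\ell$-adic pro-semisimple completion has simply connected neutral component. The only small point is that the latter fact is not quite ``by construction'' but is \cite[Proposition~3.3.4]{Dri18}, which is the precise reference the paper cites.
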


\begin{proof}
	We may assume that $\lambda$ is coprime to $p$. Then the statement follows from isomorphism \eqref{eq:HPi prod} and \cite[Proposition 3.3.4]{Dri18}. 
\end{proof}

\begin{secnumber}
	Let $\lambda$ be a non-Archimedean place of $\bQQ$. 
	In \cite[\S~6]{Dri18}, Drinfeld defined a pro-reductive group $\widehat{\Pi}^{\mot}_{(\lambda)}$ over $\overline{\mathbb{Q}}$ (independent of $\lambda$ up to unique isomorphisms) and introduced an unconditional definition of motivic Langlands parameters proposed by V. Lafforgue \cite{lafforgue-v}. 
	
	Now we introduce its variant $\HPi^{\mot}_{(\lambda)}(X,\Phi)$ with partial Frobenius. 
	Let $\mathcal{W}_p\subset \overline{\mathbb{Q}}^{\times}$ denote the subgroup of $p$-Weil numbers. 
	For every $n\ge 1$, we set $D_n:=\Hom(\mathcal{W}_p,\bGm^n)$ and $D_{n,\lambda}:=D_n\otimes_{\overline{\mathbb{Q}}}\bQla$ \cite[\S~6.1]{Dri18}. 
	Note that the group $\pi_0(D_n)$ identifies with the group $\Hom(\mu_{\infty}(\bQQ),\bGm^n)\simeq \widehat{\mathbb{Z}}^n$. 

	On the other hand, we have a canonical surjection:
	\[
	\Pi(X,\Phi)\twoheadrightarrow \Gal(\overline{k}/k)^n \simeq \widehat{\mathbb{Z}}^n.
	\]

	Following \cite[\S 6.1]{Dri18}, we define $\widehat{\Pi}_{(\lambda)}^{\mot}(X,\Phi)$ as the fiber product of $\HPi_{(\lambda)}(X,\Phi)$ and $D_{n}$ over $\widehat{\mathbb{Z}}^{n}$. 
	Then we can upgrade the isomorphism \eqref{eq:HPi prod} to be an isomorphism in $\Pred(\overline{\mathbb{Q}})$:
	\begin{equation}
		\widehat{\Pi}_{(\lambda)}^{\mot}(X,\Phi)\xrightarrow{\sim} \prod_{i=1}^n \widehat{\Pi}_{(\lambda)}^{\mot}(X_i).
		\label{eq:Drinfeld lemma mot}
	\end{equation}
	Note that the above isomorphism is independent of $\lambda$ by Proposition \ref{t:HPi unique}. 
\end{secnumber}

\section{Local Drinfeld's lemma over polyannuli}

We assume $(k,\sigma)=(\mathbb{F}_q,\id_{\mathscr{O}_K})$. 
For a connected subinterval $I$ of $(0,\infty)$, we denote by $A_K[I]$ the rigid annulus $|t|\in I$ over $K$. 
Recall that the Robba ring $\mathcal{R}$ over $K$ is defined as
\[
\mathcal{R}:=\lim_{\varepsilon\to 1} \mathscr{O}(A_K[\varepsilon,1[).
\]
Let $n$ be a positive integer. We consider the $n$-fold Robba ring $\mathcal{R}^n$ over $K$:
\begin{eqnarray}
	\mathcal{R}^n &:=& \lim_{\varepsilon_1,\ldots,\varepsilon_n\to 1} \mathscr{O}(\prod_{i=1}^n A_K[\varepsilon_i,1[)\nonumber \label{eq:Robba-n}\\
	&=& \{\sum_{I=(i_1,\ldots,i_n)} a_I \underline{t}^I ;~ |a_I|\underline{\rho}^I \to 0 ~(|i_1|+\cdots+|i_n|\to \infty), \forall \rho_i\in [\varepsilon_i,1[ \textnormal{ for some } \varepsilon_i\in ]0,1[ \}, \nonumber 
\end{eqnarray}
where $\underline{t}^I:=t_1^{i_1}\cdots t_n^{i_n}$ and $\rho^I:=\rho_1^{i_1}\cdots\rho_n^{i_n}$. 
For any algebraic extension $L/K$, we set $\mathcal{R}^n_{L}:=\mathcal{R}^n\otimes_{K}L$. 

Let $h$ be an integer $\ge 1$, and $\sigma$ a continuous automorphism of $\bQp$ lifting the $h$-th Frobenius on $\overline{k}$ and preserving each finite extension $L$ of $K$. 
We denote by $\phi_i\colon \mathcal{R}^n\to \mathcal{R}^n$ the $K$-linear endomorphism defined by $t_i\mapsto t_i^{p^h}$ and $t_j\mapsto t_j$ if $j\neq i$. 

We consider the category $\MIC(\mathcal{R}^n/K)$ of free $\mathcal{R}^n$-modules of finite rank equipped with an integrable $K$-linear connection and we denote by $\MIC^{\uni}(\mathcal{R}^n/K)$ its full subcategory consisting of unipotent objects, which is isomorphic to a successive extension of the trivial connection over $\mathcal{R}^n$. 

\textit{A partial Frobenius structure of order $h$ (resp. a partial Frobenius structure)} on an object $(M,\nabla)$ of $\MIC(\mathcal{R}^n/K)$ consists of isomorphisms $\varphi_i\colon (\sigma\circ\phi_i)^*(M,\nabla)\simeq (M,\nabla)$ of $\MIC(\mathcal{R}^n/K)$ commuting with each other (resp. without specifying order $h$). 
We denote by $\MIC(\mathcal{R}^n/K,\Phi)$ the full subcategory of $\MIC(\mathcal{R}^n/K)$ consisting of objects whose irreducible sub-quotients can be equipped with a partial Frobenius structure (of order $h'$ with $h|h'$). 
Note that a unipotent object lies in $\MIC(\mathcal{R}^n/K,\Phi)$. 

Let $K^{\ur}$ be the maximal unramified extension of $K$ in $\bQp$. 
We can extend above definition to free $\mathcal{R}^n_{K^{\ur}}$-modules of finite rank with an integrable $K^{\ur}$-linear connection. 
Note that an object of $\MIC(\mathcal{R}^n_{K^{\ur}}/K^{\ur})$ comes from the extension of scalars of an object of $\MIC(\mathcal{R}^n_{L}/L)$ for an unramified finite extension $L/K$. 

Let $\mathcal{K}:=k( (t))$, $G_{\mathcal{K}}:=\Gal(\mathcal{K}^{\sep}/\mathcal{K})$, and $I_{\mathcal{K}}$ the inertia subgroup of the Galois group $G_{\mathcal{K}}$. 
We reformulate the local monodromy theorem for $\MIC(\mathcal{R}^n/K,\Phi)$ \cite[Theorem 3.3.6]{kedlaya-mono-rel} as follows; this  generalizes Andr\'e's result \cite[Th\'eor\`eme 7.1.1]{And02II} to polyannuli.  

\begin{theorem}
	\label{t:local Drinfeld lemma}
	The category $\MIC(\mathcal{R}^n_{K^{\ur}}/K^{\ur},\Phi)$ is a neutral Tannakian category over $K^{\ur}$ and its Tannakian group is isomorphic to $(I_{\mathcal{K}}\times \Ga)^n$.
\end{theorem}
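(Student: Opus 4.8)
The strategy is to identify $\MIC(\mathcal{R}^n_{K^{\ur}}/K^{\ur},\Phi)$ with a category of representations of the claimed group and then invoke Tannakian reconstruction. The essential input is the relative $p$-adic local monodromy theorem \cite[Theorem~3.3.6]{kedlaya-mono-rel}: every object of $\MIC(\mathcal{R}^n_{K^{\ur}}/K^{\ur},\Phi)$ is \emph{quasi-unipotent}, i.e. becomes unipotent after pullback along a finite cover of $\mathcal{R}^n$ induced by a finite separable extension $\mathcal{K}'_i/\mathcal{K}$ in each of the $n$ variables, compatibly with the partial Frobenius structures. (For an object without a specified partial Frobenius structure, its irreducible subquotients are quasi-unipotent by hypothesis, and quasi-unipotence is stable under extension.) In particular no irrational exponents occur, which is why no torus will appear in the monodromy group.

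Granting this, I would analyze two subcategories. First, an object of $\MIC^{\uni}(\mathcal{R}^n_{K^{\ur}}/K^{\ur})$ is an iterated self-extension of the trivial connection, hence is the same datum as its underlying finite-dimensional $K^{\ur}$-vector space $V$ together with the $n$ pairwise commuting nilpotent residue operators $N_1,\dots,N_n$ of $\nabla$ in the coordinate directions; this identifies $\MIC^{\uni}(\mathcal{R}^n_{K^{\ur}}/K^{\ur})$ with $\Rep_{K^{\ur}}(\Ga^n)$. Moreover such an object carries a partial Frobenius structure that is unique up to unique isomorphism: the obstruction to uniqueness is measured by rank-one constant Frobenius modules over $K^{\ur}$, and these are all trivial because the residue field of $K^{\ur}$ is algebraically closed (Lang's theorem). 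This is precisely the reason the passage to $K^{\ur}$ makes the category \emph{neutral}, whereas over $K$ one would also see nontrivial unramified constant objects. Second, for objects trivialized by a finite cover, the unit-root $(\varphi,\nabla)$-module formalism over the Robba ring identifies the corresponding subcategory with continuous representations of an arithmetic covering group; the point is a local form of Drinfeld's lemma, namely that a finite cover of $\mathcal{R}^n$ carrying commuting partial Frobenius actions decomposes as an external tensor product of covers in the individual variables. Hence this group is $\prod_{i=1}^n G_{\mathcal{K}}$, and after the $\Gal(\overline{k}/k)^n$-part is rigidified away by the partial Frobenius structures over $K^{\ur}$ exactly as in Theorem~\ref{t:main thm}, it becomes $I_{\mathcal{K}}^n$. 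This decomposition is the content of \cite{kedlaya-mono-rel}.

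Finally I would assemble these. Using quasi-unipotence, I pull an object back to a cover on which it is unipotent, read off the pair $(V,(N_i)_i)$ together with its descent datum for the covering group, and send the object to $V$; taking the colimit over covers gives an exact faithful tensor functor $\omega\colon\MIC(\mathcal{R}^n_{K^{\ur}}/K^{\ur},\Phi)\to\Vect_{K^{\ur}}$, so the category is neutral Tannakian over $K^{\ur}$. Its Tannakian group $G$ sits in an exact sequence $1\to\Ga^n\to G\to I_{\mathcal{K}}^n\to 1$, with $\Ga^n$ the kernel of the quotient onto the cover-trivialized part and $I_{\mathcal{K}}^n$ acting on it by conjugation; a priori this is only a semidirect product. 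The remaining point is that the sequence is split with trivial action, so that $G\cong(I_{\mathcal{K}}\times\Ga)^n$; this reduces to the one-variable case, where it holds because the monodromy operator produced by the local monodromy theorem is a morphism of Frobenius modules valued in a Tate twist, so that inertia commutes with it once the (separately remembered) Frobenius structure is stripped off — this is André's theorem \cite[Th\'eor\`eme~7.1.1]{And02II}. I expect the main obstacle to be the local Drinfeld's lemma underpinning the product decomposition of the covering monodromy, together with the verification that the product with $\Ga^n$ is direct rather than semidirect; both are imported, respectively, from \cite{kedlaya-mono-rel} and from the classical one-variable theory.
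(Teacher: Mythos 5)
Your third paragraph — pulling back along covers where the connection becomes unipotent, reading off $(V,(N_i)_i)$ via Proposition~\ref{p:unipotent}, forgetting the nilpotent operators to get a fiber functor into $\Vect_{K^{\ur}}$, and then letting the covering-automorphism action of $I_{\mathcal{K}}^n$ coexist with the $\Ga^n$-action — is exactly the paper's proof, so the core of your approach is correct.

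However, the second paragraph is off in a way that would mislead a reader. First, your claim that a unipotent object on the polyannulus carries a (necessarily unique) partial Frobenius structure is false: the paper's Remark right after Proposition~\ref{p:unipotent} emphasizes that, unlike in the one-variable case, a unipotent object on a polyannulus may fail to admit any Frobenius structure at all. This is exactly why $\MIC(\mathcal{R}^n/K,\Phi)$ is defined via \emph{irreducible subquotients} admitting partial Frobenius, so that unipotent objects are swept in regardless of whether they themselves carry one. Consequently your Lang's-theorem explanation of neutrality is not the right mechanism; neutrality is just the existence of the explicit fiber functor from your third paragraph, and the relevance of passing to $K^{\ur}$ is that the eligible covers $Y_i$ furnished by \cite[Theorem~3.3.6]{kedlaya-mono-rel} are then defined over the base field itself, so that $\omega$ lands in $\Vect_{K^{\ur}}$ rather than in vector spaces over some further extension. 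Second, the category in question has \emph{no} specified Frobenius, so no ``arithmetic covering group'' $G_{\mathcal{K}}^n$ and no $\Gal(\overline{k}/k)^n$-part that needs rigidifying; $I_{\mathcal{K}}^n$ arises directly because $I_{\mathcal{K}}$ is precisely what classifies eligible \'etale covers of an annulus over a base with algebraically closed residue field. Likewise your final appeal to ``the monodromy operator is a morphism of Frobenius modules valued in a Tate twist'' smuggles Frobenius back in where it does not belong; the point the paper makes (asserted, not expanded) is simply that the $I_{\mathcal{K}}^n$-action on $\omega(M,\nabla)$ commutes with the nilpotent operators, which is part of the statement of the (relative) local monodromy theorem and, in one variable, of Andr\'e's theorem. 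With these corrections — removing the spurious Frobenius-structure discussion — your argument coincides with the paper's.
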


\subsection{Unipotent connections on polyannuli}

\begin{prop} \label{p:unipotent}
	The functor $(V,N_1,\ldots,N_n) \mapsto (V\otimes_K \mathcal{R},\nabla_{\mathcal{N}})$, where the connection $\nabla_{\mathcal{N}}$ is defined by:
	\[
	\nabla_{\mathcal{N}}(v \otimes 1):= \sum_{i=1}^n N_iv\otimes \frac{dt_i}{t_i},
	\]
	induces an equivalence of tensor categories between the category of finite dimensional $K$-vector spaces with $n$ commuting nilpotent operators $\mathcal{N}=(N_1,\ldots,N_n)$ and the category $\MIC^{\uni}(\mathcal{R}^n/K)$. 
\end{prop}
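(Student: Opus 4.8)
The plan is to show the functor — write it $F\colon(V,N_1,\dots,N_n)\mapsto(V\otimes_K\mathcal{R}^n,\nabla_{\mathcal{N}})$ — is a well-defined tensor functor with image in $\MIC^{\uni}(\mathcal{R}^n/K)$, and then to prove it is fully faithful and essentially surjective. Well-definedness is routine: $V\otimes_K\mathcal{R}^n$ is free of rank $\dim_K V$, and $\nabla_{\mathcal{N}}^2=0$ because each $dt_i/t_i$ is closed and the $N_i$ commute, so the curvature terms $[N_i,N_j]\,\tfrac{dt_i}{t_i}\wedge\tfrac{dt_j}{t_j}$ vanish. The image is unipotent, since commuting nilpotent endomorphisms of a finite-dimensional space admit a common flag $0=V_0\subset\cdots\subset V_m=V$ with $N_i(V_j)\subseteq V_{j-1}$ for all $i$; tensoring with $\mathcal{R}^n$ gives a filtration of $F(V,\mathcal{N})$ by sub-connections with trivial successive quotients. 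Compatibility with $\otimes$ is immediate from the Leibniz rule, as the operators on $V\otimes W$ are $N_i\otimes 1+1\otimes N_i$.

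Full faithfulness will follow from the computation $H^0\bigl(\mathcal{R}^n,F(U,\mathcal{N})\bigr)=\bigcap_{i=1}^n\ker N_i$ for any object $(U,\mathcal{N})$ of the source category, applied to internal homs: one has $\FHom\bigl(F(V,\mathcal{N}),F(W,\mathcal{M})\bigr)\cong F\bigl(\Hom_K(V,W),\mathcal{N}^{\mathrm{ad}}\bigr)$, where $N^{\mathrm{ad}}_i(\phi)=M_i\phi-\phi N_i$ are again commuting nilpotent operators, and $H^0$ of the latter is precisely the space of $K$-linear maps $V\to W$ commuting with all the operators, i.e.\ $\Hom$ in the source category. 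To prove the $H^0$ computation, write a horizontal section as a convergent series $f=\sum_{I\in\ZZ^n}u_I\underline{t}^I$ with $u_I\in U$; the $i$-th logarithmic connection operator $\partial_i$ acts termwise by $u_I\underline{t}^I\mapsto(i_i\,\mathrm{id}+N_i)(u_I)\,\underline{t}^I$, so horizontality forces $(i_i\,\mathrm{id}+N_i)(u_I)=0$ for every $i$ and $I$. For $I\neq 0$ one may choose $i$ with $i_i\neq 0$, and then $i_i\,\mathrm{id}+N_i$ is invertible over $K$ (all its eigenvalues equal $i_i$), so $u_I=0$; hence $f=u_0$ with $N_i u_0=0$ for all $i$, and conversely such an $f$ is horizontal.

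For essential surjectivity I would argue by induction on the length $\ell$ of the unipotence filtration of a given object $(M,\nabla)$ of $\MIC^{\uni}(\mathcal{R}^n/K)$. If $\ell\le 1$ then $M$ is trivial, hence $M\cong M^{\nabla}\otimes_K\mathcal{R}^n=F(M^{\nabla},0)$. In general pick a sub-connection $M'\subset M$ lying in the image of $F$, say $M'=F(V',\mathcal{N}')$, with $M/M'$ trivial, say $M/M'=F(V'',0)$; the isomorphism class of $M$ is then an element of $\Ext^1_{\MIC(\mathcal{R}^n/K)}\bigl(F(V'',0),F(V',\mathcal{N}')\bigr)$. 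Since $F(V'',0)$ is free over $\mathcal{R}^n$, every such extension splits as $\mathcal{R}^n$-modules, so this $\Ext^1$ is computed by the de Rham cohomology $H^1_{\mathrm{dR}}\bigl(\mathcal{R}^n,F(\Hom_K(V'',V'),\mathcal{N}')\bigr)$, where $\mathcal{N}'$ acts by post-composition. The de Rham complex decomposes along the monomial $\ZZ^n$-grading of $\mathcal{R}^n$ into a direct sum of Koszul cochain complexes for the tuples $(i_1\,\mathrm{id}+N'_1,\dots,i_n\,\mathrm{id}+N'_n)$ on $\Hom_K(V'',V')$; exactly as in the $H^0$ computation, the summands with $I\neq 0$ are acyclic, because a Koszul complex on a sequence containing an invertible operator is exact. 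Thus the weight-$0$ summand — the Koszul (equivalently, Lie-algebra) cohomology of $\Ga^n$ acting through the $N'_i$ — is all of $H^1_{\mathrm{dR}}$, and one checks that the map on $\Ext^1$ induced by $F$ is precisely the inclusion of this summand, hence an isomorphism. Therefore $M$ is isomorphic to $F$ applied to the corresponding extension of $(V'',0)$ by $(V',\mathcal{N}')$ in the source category, completing the induction; together with full faithfulness and compatibility with $\otimes$, $F$ is an equivalence of tensor categories.

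The main obstacle is the $\Ext^1$ comparison in the inductive step: one must verify carefully that the de Rham complex of a free module-with-connection over $\mathcal{R}^n$ genuinely computes $\Ext^1$ in $\MIC(\mathcal{R}^n/K)$ — this is where freeness of the underlying modules is used — and then that the monomial-grading decomposition kills everything of nonzero weight, so that only the part reproducing the extension theory of commuting nilpotent operators remains. Everything else is a straightforward extension of the one-variable situation over the Robba ring together with standard linear algebra of commuting nilpotent endomorphisms.
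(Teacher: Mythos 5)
Your route — tensor functor, full faithfulness via horizontal sections, essential surjectivity by d\'evissage on the unipotence filtration and an $\Ext^1$ comparison — is genuinely different from the paper's. The paper constructs an explicit quasi-inverse: given a unipotent $(M,\nabla)$ it sets $\partial_i := \nabla(t_i\,d/dt_i)$, $\partial := \partial_1\circ\cdots\circ\partial_n$, extracts $V := \bigcup_{m\ge 1}\ker(\partial^m)$, argues (using unipotence) that $\dim_K V$ equals $\operatorname{rank}_{\mathcal R^n}M$ and that each $\partial_i$ is nilpotent on $V$ by specializing at points of the complementary polyannulus, and sends $(M,\nabla)\mapsto(V,\partial_1|_V,\dots,\partial_n|_V)$. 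That version avoids any cohomology bookkeeping and, in particular, avoids stating the $\Ext^1$ comparison explicitly; your version is more systematic and makes the underlying extension theory visible. Your full-faithfulness argument via the monomial expansion of a horizontal section is fine: for $I\neq 0$ the operator $i_i\cdot\mathrm{id}+N_i$ is invertible since $N_i$ is nilpotent, so only $u_0$ survives, and the resulting $H^0=\bigcap_i\ker N_i$ reproduces $\Hom$ in the source via internal $\FHom$.

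The gap is exactly the one you flagged at the end, in the essential surjectivity step, and it is not merely a matter of ``verifying carefully'': the assertion that the de Rham complex of $F(\Hom_K(V'',V'),\mathcal N'^{\mathrm{ad}})$ over $\mathcal R^n$ \emph{decomposes as a direct sum} of Koszul complexes indexed by $I\in\ZZ^n$ is false as stated, because $\mathcal R^n$ is a ring of convergent Laurent series, not a graded ring; a cocycle is an infinite series $\sum_I g_I\underline t^I$, and the putative primitive $\sum_I f_I\underline t^I$ (with $f_I$ produced monomial by monomial from the contracted Koszul complexes) need not lie in $\mathcal R^n$. To salvage the argument one must show convergence: the operator norms $\|(i_i\,\mathrm{id}+N_i)^{-1}\|$ grow only like $|i_i|_p^{-1}=p^{v_p(i_i)}$, which is sub-geometric in $|i_i|_{\RR}$, so for $g\in\mathcal R^n$ (with supergeometric decay on every closed polyannulus $\prod[\varepsilon_i',1)$ strictly inside the domain) the series $\sum_I f_I\underline t^I$ does converge on a slightly smaller polyannulus, hence in $\mathcal R^n$. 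This is standard but it is \emph{the} nontrivial step — it is precisely the reason the statement holds over the Robba ring and would fail over, say, a polydisc with a singular point. Your write-up identifies the location of the difficulty but does not carry out the estimate, so as it stands the essential-surjectivity argument is incomplete. (The paper's quasi-inverse construction leans on essentially the same convergence input through its claim that $V$ has full rank, but sidesteps the need to phrase it cohomologically.)
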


\begin{proof}
	We construct a quasi-inverse of the above functor. Let $(M,\nabla)$ be a unipotent connection on a polyannulus $\prod_{i=1}^n A_K[\varepsilon_i,1[$. 
	The operators $\partial_i:=\nabla(t_i \frac{d}{d t_i})$ on $M$ commute with each other; set $\partial:=\partial_1\circ\cdots \circ \partial_n$. 
	Since $(M,\nabla)$ is unipotent, the $K$-vector space $V:=\cup_{n\ge 1} (\Ker \partial)^n$ has the same rank as $M$. 
	We claim that the operator $\partial_i$ is nilpotent on $V$. 
	Indeed, if $x$ is a point of the polyannulus $\prod_{j\neq i} A_K[\varepsilon_j,1[$, then its fiber $(M_x,\nabla_x)$ is a unipotent connection over the annulus $A_{K(x)}[\varepsilon_i,1[$. 
	We deduce that $\partial_i$ is nilpotent on $V\otimes_K K(x)$ and the claim follows. 
	Then
	\[
	(M,\nabla)\mapsto (\cup_{n\ge 1} (\Ker \partial)^n, \partial_1,\ldots,\partial_n) 
	\]
	defines a quasi-inverse functor. The proposition follows. 
\end{proof}

\begin{remark}
	Unlike the case of an annulus \cite[Lemma 4.3]{Mat02}, a unipotent object on a polyannulus may not admit a Frobenius structure. 
\end{remark}

\begin{secnumber} \textit{Proof of Theorem \ref{t:local Drinfeld lemma}.}
	Let $(M,\nabla)$ be an object of $\MIC(\mathcal{R}^n_{K^{\ur}}/K^{\ur},\Phi)$ defined on a polyannulus $\prod_{i=1}^n A_{K^{\ur}}[\varepsilon_i,1[$.  
	By \cite[Theorem 3.3.6]{kedlaya-mono-rel}, there exist eligible \'etale covers $\{Y_i\to A_{K^{\ur}}[\varepsilon_i,1[\}_{i=1}^n$ such that the pullback of $(M,\nabla)$ to $\prod_{i=1}^n Y_i$ is unipotent. 
	Then by Proposition \ref{p:unipotent}, we obtain a fiber functor:
	\[
	\omega\colon \MIC(\mathcal{R}^n_{K^{\ur}}/K^{\ur},\Phi)\to \Vect_{K^{\ur}},
	\]
	by forgetting nilpotent operators. 
	This makes $\MIC(\mathcal{R}^n_{K^{\ur}}/K^{\ur},\Phi)$ into a neutral Tannakian category over $K^{\ur}$. 
	In this way, we obtain a continuous action of $I_{\mathcal{K}}^n$ on $\omega(M,\nabla)$ via $I_{\mathcal{K}}^n\to \prod \Aut(Y_i/A_{K^{\ur}}[I])$, which commutes with the action of nilpotent operators. 
	Then the theorem follows. \hfill \qed 
\end{secnumber}

\subsection{Construction of Weil--Deligne representations}

In this subsection, we briefly review representations of a self-product of the Weil group of $\mathcal{K}$ associated to differential modules with a partial Frobenius structure on polyannuli, following \cite{Mar08}. 

\begin{secnumber}
	Recall that $q=p^s$ is the cardinality of $k$. 
	We denote by $\PhiMIC(\mathcal{R}^n/K)$ the category of free modules over $\mathcal{R}^n$ equipped with an integrable $K$-linear connection and a partial Frobenius structure of order $s$. 

	We denote by $\Del_{K^{\ur}}(G_{\mathcal{K}}^n)$ the category of triples $(V,\varphi_1,\ldots,\varphi_n,N_1,\ldots,N_n)$ consisting of a continuous semilinear representation of $G_{\mathcal{K}}^n$ on a finite dimensional $K^{\ur}$-vector space $V$ (with discrete topology), $\sigma$-semilinear equivariant Frobenius isomorphisms $\{\varphi_i\}_{i=1}^n$ on $V$ commuting with each other, and equivariant monodromy operators $\{N_i\colon V\to V\}_{i=1}^n$ commuting with each other satisfying $N_i\varphi_j=q^{\delta_{ij}}\varphi_jN_i$ for $i,j=1,\ldots,n$. 

	The proof of Theorem \ref{t:local Drinfeld lemma} shows that there exists a canonical tensor functor:
	\[
		\PhiMIC(\mathcal{R}^n/K) \to \Del_{K^{\ur}}(G_{\mathcal{K}}^n).
	\]
\end{secnumber}

\begin{secnumber}
	Let $W_{\mathcal{K}}$ be the Weil group of $\mathcal{K}$ and $v_i\colon W_{\mathcal{K}}^n\to \mathbb{Z}$ the projection on the $i$-th component. 
	We consider the category $\Rep_{K^{\ur}}(\WD^n_{\mathcal{K}})$ of pairs $(V,N_1,\cdots,N_n)$ consisting of a continuous linear representation $\rho$ of $W_{\mathcal{K}}^n$ on  a finite dimensional $K^{\ur}$-vector space $V$ (with discrete topology), monodromy operators $\{N_i\}_{i=1}^n$ commuting with each other satisfying $N_i\rho(g)=q^{v_i(g)}\rho(g)N_i$ for $i=1,\ldots,n$, and $g\in W_{\mathcal{K}}^n$.
	When $n=1$, this is the category of Weil--Deligne representations of $W_{\mathcal{K}}$. 

	We have a Frobenius linearization functor
	\[
		L\colon \Del_{K^{\ur}}(G_{\mathcal{K}}^n)\to \Rep_{K^{\ur}}(\WD^n_{\mathcal{K}}),
	\]
	sending $(V,\varphi_i,N_i)$ to a continuous linear representation $\rho$ of $W_{\mathcal{K}}^n$ on $V$ defined by 
	\[
	\rho(g)(m):=g(\prod_{i=1}^n \varphi^{v_i(g)}_i(m)),
	\]
	together with monodromy operators $\{N_i\}_{i=1}^n$. 
	In summary, we obtain a tensor functor:
	\[
	\PhiMIC(\mathcal{R}^n/K) \to \Rep_{K^{\ur}}(\WD^n_{\mathcal{K}}).
	\]
\end{secnumber}

\end{document}